\documentclass[twocolumn]{autart}    

\usepackage{cite}
\usepackage{amsmath,amssymb,amsfonts}
\usepackage{graphicx}
\usepackage{algorithm,algorithmic}
\usepackage{hyperref}
\usepackage{textcomp}

\usepackage{xcolor}

    \usepackage{amsmath} 
    \usepackage{amssymb}  
    \usepackage{stmaryrd} 
    \usepackage{amssymb}  
    \usepackage{empheq} 

    \usepackage{graphicx} 
    \usepackage[hang,small,bf]{caption}
    \usepackage[subrefformat=parens]{subcaption}
    \usepackage{stfloats}
    \fnbelowfloat

    \newcommand{\vertiii}[1]{{\left\vert\kern-0.25ex\left\vert\kern-0.25ex\left\vert #1 
        \right\vert\kern-0.25ex\right\vert\kern-0.25ex\right\vert}}

    \newenvironment{proof}[1][Proof]{%
      \par\noindent\textbf{#1. }\ignorespaces
    }{%
      \hfill$\square$\par
    }
    
\begin{document}

\begin{frontmatter}

\title{Mutual Information Optimal Density Control of Linear Systems and Generalized Schr\"{o}dinger Bridges \\with Reference Refinement} 

\thanks[footnoteinfo]{This paper was not presented at any IFAC 
meeting. Corresponding author K. Kashima. 
}

\author[Kyoto]{Shoju Enami}\ead{enami.shoujyu.57r@st.kyoto-u.ac.jp},    
\author[Kyoto]{Kenji Kashima}\ead{kk@i.kyoto-u.ac.jp},               

\address[Kyoto]{Graduate School of Informatics, Kyoto University, Kyoto, Japan}  

\begin{keyword}                            
Mutual information regularization, optimal control, stochastic control, Schr\"{o}dinger bridge            
\end{keyword}

\begin{abstract}                          
We consider a mutual information (MI) regularized version of optimal density control of a discrete-time linear system.
MI optimal control has been proposed as an extension of maximum entropy optimal control to trade off between control performance and benefits provided by stochastic inputs.
MI regularization induces stochasticity in the policy, which poses challenges for applications of MI optimal control in safety-critical scenarios.
To remedy this situation, we impose Gaussian density constraints at specified times to directly control state uncertainty.
For this MI optimal density control problem, we propose an alternating optimization algorithm and derive the closed form of each step in the algorithm.
In addition, we reveal that the alternating optimization of the MI optimal density control problem coincides with that of the so-called generalized Schr\"{o}dinger bridge problem associated with the discrete-time linear system.

\end{abstract}

\end{frontmatter}

\section{Introduction}\label{sec:Introduction}

Optimal control with policy entropy regularization is known as maximum entropy (MaxEnt) optimal control, in which stochastic control inputs are deliberately introduced through entropy regularization.
Entropy regularization brings various benefits.
In the field of reinforcement learning (RL), MaxEnt RL has the advantage of promoting exploration \cite{haarnoja2017reinforcement, haarnoja2018soft}.
Entropy regularization has numerous advantages also for model-based control theory, such as robustness against disturbances \cite{eysenbach2021maximum, hazan2019provably}, equivalence to an inference problem \cite{levine2018reinforcement}, equivalence to
the Schr\"{o}dinger bridge \cite{ito2023maximum}, and many more.
All the benefits are brought about by the input stochasticity induced by entropy regularization that encourages the policy to approach a uniform distribution in terms of the Kullback–Leibler (KL) divergence.
On the other hand, stochastic input generated by entropy regularization also brings disadvantages.
For instance, when a control problem includes inputs that are rarely useful, policies with high entropy that assign similar probabilities to all inputs may perform poorly.

To refine the assignment of input importance while maintaining stochasticity, several studies \cite{grau2018soft, leibfried2020mutual, malloy2020deep, enami2025mutual, enami2025policy} have proposed mutual information (MI) optimal control as a generalized form of MaxEnt optimal control.
In MI regularization, not only the policy but also a reference feedforward policy, which is called the \textit{prior} in the previous studies, is optimized simultaneously, unlike in entropy regularization, where the prior is fixed to a uniform distribution.
This mechanism enables us to automatically adjust the importance of each control input without sacrificing input stochasticity.
The efficacy of this framework is well supported.
For example, \cite{grau2018soft} demonstrated that RL agents utilizing mutual information regularization can achieve superior performance over those utilizing entropy regularization in certain tasks.

In MaxEnt and MI optimal control, despite the benefits brought by such regularization terms, the resulting stochastic policy can lead to increased uncertainty in the state.
In the context of controlling safety-critical systems, it is essential to regulate the state uncertainty.  
A straightforward approach to tame the state uncertainty is density control, where the density function of the state distribution is steered from a given initial one to a specified target one \cite{chen2015optimal}.  
In particular, the control of only the first and second moments of the stochastic state is referred to as covariance steering.
Optimal density control and covariance steering have been studied in various problem settings such as discrete-time linear systems \cite{bakolas2016optimal, bakolas2018finite}, nonlinear systems \cite{yi2020nonlinear, caluya2021nonlinear, elamvazhuthi2018nonlinear}, chance constraints \cite{okamoto2018chance}, to name a few. 
In policy entropy regularization, MaxEnt optimal density control has been tackled \cite{ito2023maximum, ito2024maximum}.
In contrast, there is no prior work that addresses MI optimal density control to the best of our knowledge.

In addition to density control, the Schr\"{o}dinger bridge, which has received renewed attention in data sciences \cite{peyre2019computational}, serves as an alternative framework for steering the state distribution of a dynamical system toward a specified target.
The Schr\"{o}dinger bridge problem \cite{schrodinger1931uber, schrodinger1932sur} seeks the most likely stochastic process (referred to as \textit{controlled process}) that matches two prescribed marginal distributions at two different time instants while remaining as close as possible to a given stochastic process (referred to as reference process).
Various extensions of Schr\"{o}dinger bridges have been proposed to make it more sophisticated.
In the generalized Schr\"{o}dinger bridge, we can incorporate desired specifications of the state of the controlled process by adding regularization terms \cite{chen2023GSB, liu2023GSB, theodoropoulos2026GSB}.
The Schr\"{o}dinger bridge with reference refinement is another extension, which simultaneously infers the most likely controlled process and improves the reference process by alternating optimization for the aims of system identification and so on \cite{morimoto2025linear, shen2024multi}.

Optimal density control and the Schr\"{o}dinger bridge are deeply interconnected.
Recent literature \cite{chen2016relation, ito2023maximum} has established equivalence between optimal density control and Schr\"{o}dinger bridge problems, demonstrating that the latter can be systematically reduced to the former and solved leveraging established tools from control theory.
While the equivalence between MaxEnt optimal density control and the Schr\"{o}dinger bridge problem has been well documented \cite{ito2023maximum}, the corresponding Schr\"{o}dinger bridge formulation for MI optimal density control remains unelucidated.

\paragraph*{Contributions}
The main contributions of this paper are listed as follows:
\begin{enumerate}
    \item We investigate an MI optimal density control problem for discrete-time linear systems whose initial density and terminal density are both Gaussian.
    Specifically, we propose an alternating optimization algorithm of this problem, which optimizes the policy and the prior alternately.
    In this proposal, we derive the closed-form optimal policy for a given prior and, conversely, the closed-form optimal prior for a given policy.
    This result enables us to regulate the state uncertainty in MI optimal control;

    \item We formulate a generalized Schr\"{o}dinger bridge problem with reference refinement associated with the discrete-time linear system.
    For this problem, we propose an alternating optimization algorithm, which infers the most likely controlled process and improves the reference process alternately.
    Through the proposal of this algorithm, we reveal the equivalence between the alternating optimization procedures in the MI optimal density control problem and the generalized Schr\"{o}dinger bridge problem with reference refinement.
    Specifically, the optimizations of the policy and the prior in the MI optimal density control problem are equivalent to the optimizations of the controlled process and the reference process, respectively.
\end{enumerate}

\begin{figure*}[h]
    \begin{center}
    \centerline{\includegraphics[width=140mm]{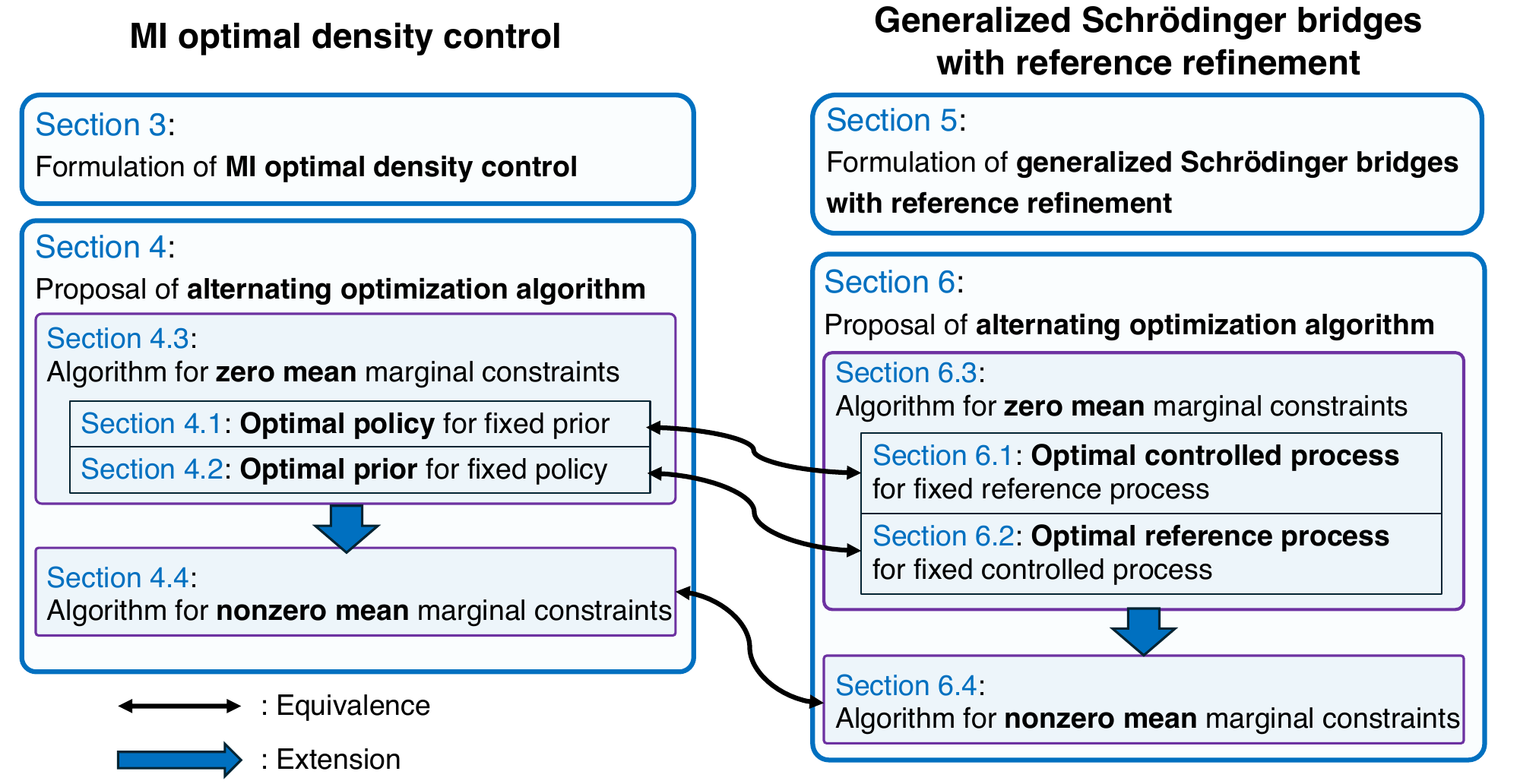}}
    \caption{Overview of theoretical parts}
    \label{fig:organization}
    \end{center}
\end{figure*}

\paragraph*{Organization}
This paper is organized as follows: In Section \ref{sec:Preliminary Introduction: MaxEnt}, we briefly review MaxEnt optimal control and density control.
Section \ref{sec:Formulation of MI Optimal Density Control} formulates the MI optimal density control problem in a simple case where the means of marginal constraints are zero.
Section \ref{sec:Alternating Optimization for MI Optimal Density Control} proposes the alternating optimization algorithm for the MI optimal density control problem and extends this algorithm for a general case with nonzero mean marginal constraints.
In Section \ref{sec:Formulation of Generalized Schrodinger Bridges}, we formulate the generalized Schr\"{o}dinger bridge problem with reference refinement associated with the linear system in the aforementioned simple case.
Section \ref{sec:Alternating Optimization for Schrodinger Bridges} proposes the alternating optimization algorithm for the Schr\"{o}dinger bridge problem by revealing the equivalence between the alternating optimization procedures of these two problems.
Section \ref{sec:Alternating Optimization for Schrodinger Bridges} further extends the algorithm to the general case that involves nonzero mean marginal constraints.
Section \ref{sec:Numerical Experiment} provides some numerical examples.
We conclude this paper in Section \ref{sec:Conclusion}.
Fig. \ref{fig:organization} provides an illustrative overview of theoretical parts of this paper.

\paragraph*{Notation}
The set of all integers that are larger than or equal to $a$ is denoted by $\mathbb{Z}_{\geq a}$.
The Borel $\sigma$-algebra on $\mathbb{R}^{n}$ is denoted by $\mathcal{B}_{n}$.
The set of integers $\{k,k+1,\ldots, l\}(k<l)$ is denoted by $\llbracket k, l\rrbracket$.
The set of all symmetric matrices of size $n$ is denoted by $\mathbb{S}^{n}$.
For $A, B \in \mathbb{S}^{n}$, we write $A \succ B$ (resp. $A \succeq B$) if $A-B$ is positive definite (resp. positive semidefinite).
The identity matrix is denoted by $I$, and its dimension depends on the context.
The Euclidean norm and the Frobenius norm are denoted by $\|\cdot \|$.
The determinant and the trace of $A \in \mathbb{R}^{n\times n}$ is denoted by $|A|$ and $\mathrm{Tr}(A)$, respectively.
For $A \in \mathbb{R}^{n\times m}$, denote the image of $A$ by $\mathrm{Im}(A)$.
For $x \in \mathbb{R}^{n}$ and $A \in \mathbb{S}^{n}$, denote $\|x\|_{A} := (x^{\top}Ax)^{\frac{1}{2}}$.
Note that $\|\cdot\|_{A}$ is no longer a norm if $A$ is not positive definite.
For $A \in \mathbb{R}^{n \times m}$, denote the Moore-Penrose inverse of $A$ by $A^{\dagger}$.
The expected value of a random variable is denoted by $\mathbb{E}[\ \cdot\ ]$.
A multivariate Gaussian distribution on $\mathcal{B}_{n}$ with mean $\mu \in \mathbb{R}^{n}$ and covariance matrix $\Sigma \succeq 0$ is denoted by $\mathcal{N}(\mu,\Sigma)$.
For a probability distribution $p$, we also use $p$ to denote its probability density function (PDF) when it exists.
Similarly, we denote the PDF of $\mathcal{N}(\mu,\Sigma)$ by the same symbol when it exists.
Denote the entropy of a probability distribution $p$ by $\mathcal{H}(p)$ when it is defined.
For probability distributions $p$ and $q$, the Radon–Nikodym derivative and the KL divergence between $p$ and $q$ are denoted by $\frac{dp}{dq}$ and $\mathcal{D}_{\mathrm{KL}}[p \| q]$, respectively, when they are defined.
The mutual information between two random variables $x,y$ is denoted by $\mathcal{I}(x,y)$.
We use the same symbol for a random variable and its realization.

\section{Preliminary Introduction: MaxEnt Optimal Control and Optimal Density Control}
\label{sec:Preliminary Introduction: MaxEnt}

Before showing our main results, we introduce MaxEnt optimal control and MaxEnt optimal density control \cite{ito2023maximum}.

\subsection{MaxEnt Optimal Control}
A MaxEnt optimal control problem of linear systems with quadratic costs is formulated as follows.

\begin{prob}
    Find a policy $\pi=\{\pi_{k}\}_{k=0}^{T-1}$ that solves
    \begin{align}
        &\min_{\pi} \mathbb{E}\left[ \sum_{k=0}^{T-1} \left\{\frac{1}{2}\|u_{k}\|^{2} - \mathcal{H}(\pi_{k}(\cdot|x_{k})) \right\} + \frac{1}{2}\|x_{T}\|_{F}^{2} \right] \label{eq:objective function of MEOCP} \\
        &\mbox{s.t. }x_{k+1} = A_{k} x_{k} + \bar{B}_{k} u_{k}, \label{eq:linear system of MaxEnt}\\
        &\hspace{19pt}u_{k} \sim \pi_{k}(\cdot|x) \ \mbox{given }x=x_{k}, \label{eq:stochastic input of MaxEnt}\\
        &\hspace{19pt}x_{0} \sim \mathcal{N}(0, \Sigma_{\mathrm{ini}}), \label{eq:initial condition of MaxEnt}
    \end{align}
    where $T \in \mathbb{Z}_{\geq 1}$ is the time horizon, $x_{k} \in \mathbb{R}^{n}$ and $u_{k} \in \mathbb{R}^{m}$ are the state and the input, respectively, $A_{k} \in \mathbb{R}^{n \times n}, \bar{B}_{k} \in \mathbb{R}^{n \times m}$ are the system and input matrices, respectively, $F\in \mathbb{S}^{n}$ is the weight matrix of the terminal cost, and $\Sigma_{\mathrm{ini}} \succ 0$ is the covariance matrix of the initial state.
    A stochastic policy $\pi_{k}$ denotes a conditional PDF on $\mathbb{R}^{m}$ given $x_{k} = x$. 
  
    \label{prob:MEOCP}
\end{prob}

The entropy regularization term in \eqref{eq:objective function of MEOCP} induces the stochasticity of the control input, which is the difference from conventional linear-quadratic-regulator (LQR) problems.
The optimal solution to Problem \ref{prob:MEOCP} can be obtained in closed form by dynamic programming similar to the conventional LQR problem.

\begin{prop}[{\cite[Proposition 1]{ito2023maximum}}]
    Assume that $\Pi_{k} \in \mathbb{S}^{n}$ satisfies $I + \bar{B}_{k}^{\top}\Pi_{k+1}\bar{B}_{k} \succ 0$ for any $k \in \llbracket 0, T-1 \rrbracket$ and is the unique solution to the following Riccati equation:
    \begin{align}
        \Pi_{k} = & A_{k}^{\top} \Pi_{k+1} A_{k} -A_{k}^{\top} \Pi_{k+1} \bar{B}_{k}(I + \bar{B}_{k}^{\top}\Pi_{k+1}\bar{B}_{k})^{-1}\nonumber \\
        &\times \bar{B}_{k}^{\top} \Pi_{k+1} A_{k},k \in \llbracket 0, T-1\rrbracket ,\label{eq:Riccati difference equation of MEOCP}\\
        \Pi_{T} = & F. \label{eq:terminal condition of Riccati difference equation of MEOCP}
    \end{align}    
    Then, the unique optimal policy $\hat{\pi}^{\mathrm{ME}}$ of Problem \ref{prob:MEOCP} is given by
    \begin{align}
        \hat{\pi}_{k}^{\mathrm{ME}}(\cdot|x) = \mathcal{N}(\mu_{\hat{\pi}_{k}^{\mathrm{ME}}}, \Sigma_{\hat{\pi}_{k}^{\mathrm{ME}}}),  k \in \llbracket 0,T-1 \rrbracket,\label{eq:optimal policy of MEOCP}
    \end{align}
    where
    \begin{align}
        \Sigma_{\hat{\pi}_{k}^{\mathrm{ME}}}:=&\left(I + \bar{B}_{k}^{\top}\Pi_{k+1}\bar{B}_{k}\right)^{-1},\label{eq:covariance matrix of optimal policy of MEOCP}\\
        \mu_{\hat{\pi}_{k}^{\mathrm{ME}}} :=&  -\Sigma_{\hat{\pi}_{k}^{\mathrm{ME}}}\bar{B}_{k}^{\top}\Pi_{k+1}A_{k}x.\label{eq:mean of optimal policy of MEOCP}
    \end{align}
  
    \label{prop:optimal policy of MEOCP}
\end{prop}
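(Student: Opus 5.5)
We argue by backward dynamic programming on the value function, with the induction hypothesis that the optimal cost-to-go at time $k$ is quadratic up to an additive constant:
\begin{align*}
V_{k}(x) = \frac{1}{2}\|x\|_{\Pi_{k}}^{2} + c_{k},
\end{align*}
where $c_{k}$ does not depend on $x$. At $k=T$ this holds with $\Pi_{T}=F$ and $c_{T}=0$, by \eqref{eq:terminal condition of Riccati difference equation of MEOCP}. Because the dynamics \eqref{eq:linear system of MaxEnt} are deterministic given $(x_{k},u_{k})$, the Bellman recursion reads
\begin{align*}
V_{k}(x) = \min_{\pi_{k}(\cdot|x)} \int \pi_{k}(u|x)\Bigl\{\frac{1}{2}\|u\|^{2} + \log \pi_{k}(u|x) + V_{k+1}(A_{k}x+\bar{B}_{k}u)\Bigr\}\,du .
\end{align*}
The minimization over conditional densities is carried out pointwise in $x$, so the policy minimizing the full expectation is obtained state by state.

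For the inner minimization, put $g(u):=\frac{1}{2}\|u\|^{2}+V_{k+1}(A_{k}x+\bar{B}_{k}u)$. We use the Gibbs variational identity
\begin{align*}
\int \pi\,(g+\log\pi)\,du = \mathcal{D}_{\mathrm{KL}}\bigl[\pi \,\|\, e^{-g}/Z\bigr] - \log Z, \qquad Z := \int e^{-g(u)}\,du .
\end{align*}
It follows that the unique minimizer is $\pi^{*}\propto e^{-g}$, with optimal value $-\log Z$, provided $Z<\infty$. Under the induction hypothesis, $g$ is a quadratic in $u$ with Hessian $I+\bar{B}_{k}^{\top}\Pi_{k+1}\bar{B}_{k}$, which is positive definite by assumption. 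This gives finiteness of $Z$, and completing the square shows that $\pi^{*}$ is the Gaussian of \eqref{eq:covariance matrix of optimal policy of MEOCP} and \eqref{eq:mean of optimal policy of MEOCP}: covariance $\Sigma=(I+\bar{B}_{k}^{\top}\Pi_{k+1}\bar{B}_{k})^{-1}$ and mean $-\Sigma\bar{B}_{k}^{\top}\Pi_{k+1}A_{k}x$.

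It remains to close the induction by showing that $-\log Z$ has the form $\frac{1}{2}\|x\|_{\Pi_{k}}^{2}+c_{k}$. Completing the square, the $u$-independent remainder equals
\begin{align*}
\frac{1}{2}x^{\top}\bigl(A_{k}^{\top}\Pi_{k+1}A_{k} - A_{k}^{\top}\Pi_{k+1}\bar{B}_{k}\Sigma\bar{B}_{k}^{\top}\Pi_{k+1}A_{k}\bigr)x + c_{k+1},
\end{align*}
which is exactly $\frac{1}{2}\|x\|_{\Pi_{k}}^{2}+c_{k+1}$ by \eqref{eq:Riccati difference equation of MEOCP}. The Gaussian normalization contributes $-\frac{m}{2}\log 2\pi - \frac{1}{2}\log|\Sigma|$, which does not depend on $x$ and is absorbed into $c_{k}$.

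Uniqueness of the optimal policy follows from strict positivity of the KL term away from $\pi^{*}$ at every stage. Optimality of the resulting policy for the original expectation objective, which averages over $x_{0}\sim\mathcal{N}(0,\Sigma_{\mathrm{ini}})$, follows from the standard verification argument: sum the Bellman inequalities along trajectories.

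The main obstacle is justifying the pointwise Gibbs step: one must know that $Z$ is finite and that the minimization is over all admissible conditional densities, including ones where $\mathbb{E}\log\pi$ could be $-\infty$. Positive definiteness of the Hessian handles finiteness. For the other issue, I would restrict attention to policies with finite entropy and finite second moments, for which every term in the Bellman recursion is well defined. A secondary subtlety is that $F$, and hence $\Pi_{k+1}$, may be indefinite. This is harmless, since only the stated positivity of $I+\bar{B}_{k}^{\top}\Pi_{k+1}\bar{B}_{k}$ enters the argument.
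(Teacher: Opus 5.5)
Your proposal is correct and follows essentially the argument behind the cited result; the paper itself gives no proof and defers to \cite[Proposition 1]{ito2023maximum}, where the same route is used: backward dynamic programming with a quadratic-plus-constant value function, the Gibbs form $\pi_k(u|x)\propto \exp\{-\frac{1}{2}\|u\|^{2}-V_{k+1}(A_kx+\bar{B}_ku)\}$ of the stagewise minimizer, and completion of the square to recover the Riccati recursion and the stated Gaussian policy. Your integrability caveats are the right ones (finiteness of $Z$, and restricting to finite-second-moment policies, which already makes $\int\pi\log\pi>-\infty$ automatic), and uniqueness is, as usual, understood almost everywhere with respect to the state distribution.
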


The mean \eqref{eq:mean of optimal policy of MEOCP} coincides with the optimal controller of the corresponding LQR problem.
In other words, $\hat{\pi}^{\mathrm{ME}}$ is the optimal LQR controller perturbed by independent additive Gaussian noise with zero mean and covariance matrix $\Sigma_{\hat{\pi}_{k}^{\mathrm{ME}}}$.

Denote the covariance matrix of $x_{k}$ by $\Sigma_{x_{k}}$.
Under the optimal policy $\hat{\pi}^{\mathrm{ME}}$ and the system \eqref{eq:linear system of MaxEnt} with \eqref{eq:stochastic input of MaxEnt} and \eqref{eq:initial condition of MaxEnt}, the weight matrix $F$ affects the terminal covariance matrix $\Sigma_{x_{T}}$.
For instance, a positive definite $F$ results in a small $\Sigma_{x_{T}}$, while a negative definite $F$ leads to a large $\Sigma_{x_{T}}$.
Here, let us consider the following problem: how to adjust $F$ to steer $\Sigma_{x_{T}}$ to a desired covariance matrix under \eqref{eq:linear system of MaxEnt}--\eqref{eq:initial condition of MaxEnt} and \eqref{eq:optimal policy of MEOCP}.
To answer this problem, we introduce some notations.
For a desired covariance matrix $\Sigma_{\mathrm{fin}}\succ 0$, we denote by
\begin{align*}
    F_{\Sigma_{\mathrm{fin}}}^{\mathrm{ME}} \in \mathbb{S}^{n}
\end{align*}
a weight matrix $F$ that makes $\Sigma_{x_{T}}=\Sigma_{\mathrm{fin}}$ under \eqref{eq:linear system of MaxEnt}--\eqref{eq:initial condition of MaxEnt} and \eqref{eq:optimal policy of MEOCP} if it exists.
Define the state-transition matrix by
\begin{align*}
    &\Phi(k,l) := \begin{cases}
        A_{k-1}A_{k-2}\cdots A_{l} &,k>l,\\
        I &,k=l,\\
        A_{k}^{-1}A_{k+1}^{-1}\cdots A_{l-1}^{-1} &,k<l
    \end{cases} \\
\end{align*}
under the invertibility of $A_{k}$.
In addition, we introduce the reachability Gramian and the controllability Gramian of the system \eqref{eq:linear system of MaxEnt} as follows:
\begin{align}
    &G_{r}(k_{1},k_{0}):=\sum_{k=k_{0}}^{k_{1}-1}\Phi(k_{1},k+1)\bar{B}_{k}\bar{B}_{k}^{\top}\Phi(k_{1},k+1)^{\top},\label{eq:reachability Gramian}\\
    &G_{c}(k_{1},k_{0}):=\sum_{k=k_{0}}^{k_{1}-1}\Phi(k_{0},k+1)\bar{B}_{k}\bar{B}_{k}^{\top}\Phi(k_{0},k+1)^{\top}\nonumber
\end{align}
for $k_{0}<k_{1}$.
Using these notations, the following proposition answers to the problem how to set $F$ so that $\Sigma_{x_{T}}=\Sigma_{\mathrm{fin}}$ under \eqref{eq:linear system of MaxEnt}--\eqref{eq:initial condition of MaxEnt} and \eqref{eq:optimal policy of MEOCP} by constructing $F_{\Sigma_{\mathrm{fin}}}^{\mathrm{ME}}$ in closed form.

\begin{prop}[{\cite[Proposition 3]{ito2023maximum}}]
    Consider a given desired covariance matrix $\Sigma_{\mathrm{fin}}\succ 0$.
    Assume that $A_{k}$ is invertible for any $k \in \llbracket 0,T-1 \rrbracket$, and there exists $k_{r}\in \llbracket 1,T \rrbracket$ such that $G_{r}(k,0)$ is invertible for any $k \in \llbracket k_{r},T\rrbracket$ and $G_{r}(T,k)$ is invertible for any $k \in \llbracket 0,k_{r}-1 \rrbracket$.
    In addition, assume that for
    \begin{align*}
        S_{0}:=&G_{c}(T,0)^{-\frac{1}{2}}\Sigma_{\mathrm{ini}}G_{c}(T,0)^{-\frac{1}{2}}, \\
        S_{T}:=& G_{c}(T,0)^{-\frac{1}{2}}\Phi(0,T)\Sigma_{\mathrm{fin}}\Phi(0,T)^{\top}G_{c}(T,0)^{-\frac{1}{2}}, 
    \end{align*}
    the following two matrices are invertible.
    \begin{align*}
        \mathcal{F}:=&S_{0}+\frac{1}{2}I - \left(S_{0}^{\frac{1}{2}}S_{T}S_{0}^{\frac{1}{2}} + \frac{1}{4}I \right)^{\frac{1}{2}}, \\
        -&S_{0}+\frac{1}{2}I + \left(S_{0}^{\frac{1}{2}}S_{T}S_{0}^{\frac{1}{2}} + \frac{1}{4}I \right)^{\frac{1}{2}}.
    \end{align*}
    Then, the solution to the following Lyapunov difference equation
    \begin{align}
        Q_{k+1}=A_{k}Q_{k}A_{k}^{\top}-\bar{B}_{k}\bar{B}_{k}^{\top} \label{eq:lyapunov difference equation}
    \end{align}
    with the initial condition
    \begin{align}
        Q_{0} =& G_{c}(T,0)^{\frac{1}{2}}S_{0}^{\frac{1}{2}}\mathcal{F}^{-1} S_{0}^{\frac{1}{2}}G_{c}(T,0)^{\frac{1}{2}} \label{eq:initial condition of lyapunov difference equation}
    \end{align}
    satisfies that $Q_{k}$ is invertible for any $k \in \llbracket 0,T \rrbracket$.
    In addition, the solution to \eqref{eq:Riccati difference equation of MEOCP} and \eqref{eq:terminal condition of Riccati difference equation of MEOCP} with $F = Q_{T}^{-1}$ satisfies $I + \bar{B}_{k}^{\top}\Pi_{k+1}\bar{B}_{k} \succ 0$ and $\Pi_{k}=Q_{k}^{-1}$ for any $k \in \llbracket 0, T-1 \rrbracket$.
    Furthermore, $F_{\Sigma_{\mathrm{fin}}}^{\mathrm{ME}}$ is uniquely given by $F_{\Sigma_{\mathrm{fin}}}^{\mathrm{ME}} = Q_{T}^{-1}$.
  
    \label{prop:proper choice of F in MEOCP}
\end{prop}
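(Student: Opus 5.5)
The idea is to reverse the usual order of construction. Instead of starting from $F$, we start from the Lyapunov recursion \eqref{eq:lyapunov difference equation}, show that its inverse solves the Riccati equation, and then read off the terminal covariance in closed form.

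\textbf{Step 1: $\Pi_k = Q_k^{-1}$.} Suppose $Q_k$ and $Q_{k+1}$ are invertible and $Q_{k+1} = A_kQ_kA_k^\top - \bar B_k\bar B_k^\top$. Applying the Woodbury identity to $Q_{k+1}=A_k(Q_k - A_k^{-1}\bar B_k\bar B_k^\top A_k^{-\top})A_k^\top$ gives
\[
Q_k^{-1}=A_k^\top Q_{k+1}^{-1}A_k - A_k^\top Q_{k+1}^{-1}\bar B_k\,(I+\bar B_k^\top Q_{k+1}^{-1}\bar B_k)^{-1}\bar B_k^\top Q_{k+1}^{-1}A_k .
\]
Here invertibility of $I+\bar B_k^\top Q_{k+1}^{-1}\bar B_k$ is equivalent to invertibility of $Q_k$, by a determinant identity. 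This is exactly \eqref{eq:Riccati difference equation of MEOCP}. So with $F=Q_T^{-1}$, uniqueness of the backward recursion yields $\Pi_k=Q_k^{-1}$.

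\textbf{Step 2: closed-loop covariance.} With $\Pi_{k+1}=Q_{k+1}^{-1}$, the policy covariance is $(I+\bar B_k^\top Q_{k+1}^{-1}\bar B_k)^{-1}$. The closed-loop matrix $A_k-\bar B_k\Sigma_{\hat\pi_k}\bar B_k^\top\Pi_{k+1}A_k$ simplifies, again by Woodbury, to $(I+\bar B_k\bar B_k^\top Q_{k+1}^{-1})^{-1}A_k = Q_{k+1}A_k^{-\top}Q_k^{-1}$. Writing $\Sigma_{x_k}$ for the state covariance, the recursion is
\[
\Sigma_{x_{k+1}} = Q_{k+1}A_k^{-\top}Q_k^{-1}\Sigma_{x_k}Q_k^{-1}A_k^{-1}Q_{k+1} + \bar B_k\Sigma_{\hat\pi_k}\bar B_k^\top .
\]
The change of variables $X_k := Q_k^{-1}\Sigma_{x_k}Q_k^{-1}$ should turn this into a linear recursion. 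Solving it and substituting $\Phi$ and the Gramians $G_r,G_c$ expresses $\Sigma_{x_T}$ in terms of $Q_0$ and $\Sigma_{\mathrm{ini}}$.

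\textbf{Step 3: solving for $Q_0$.} After normalizing by $G_c(T,0)^{1/2}$, the condition $\Sigma_{x_T}=\Sigma_{\mathrm{fin}}$ becomes a quadratic matrix equation in $\mathcal F$. The expected form is
\[
(S_0-\mathcal F)\,S_0^{-1}\,(S_0-\mathcal F) + (\ldots) = S_T ,
\]
i.e., a continuous-algebraic-Riccati-type equation. The symmetric solution built from $(S_0^{1/2}S_TS_0^{1/2}+\tfrac14 I)^{1/2}$ is $\mathcal F$, which gives $Q_0$ as in \eqref{eq:initial condition of lyapunov difference equation}. Uniqueness of $F^{\mathrm{ME}}_{\Sigma_{\mathrm{fin}}}$ will follow from showing that this quadratic equation, together with the positivity requirement $I+\bar B_k^\top\Pi_{k+1}\bar B_k\succ0$, selects only this root. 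This is the same selection argument as in the standard covariance-steering derivation.

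\textbf{Step 4: invertibility and positivity.} The remaining and hardest part is showing that $Q_k$ is invertible for every $k$ and that $I+\bar B_k^\top\Pi_{k+1}\bar B_k\succ0$. We write
\[
Q_k=\Phi(k,0)\bigl(Q_0-G_c(k,0)\bigr)\Phi(k,0)^\top
\]
and use $Q_0 = G_c(T,0)^{1/2}S_0^{1/2}\mathcal F^{-1}S_0^{1/2}G_c(T,0)^{1/2}$. Invertibility of $Q_0-G_c(k,0)$ is then reduced to invertibility of $\mathcal F$ and of the second assumed matrix, exploiting monotonicity $0\preceq G_c(k,0)\preceq G_c(T,0)$. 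The Gramian hypotheses enter at the split point $k_r$: for $k\ge k_r$ we use invertibility of $G_r(k,0)$, and for $k<k_r$ we use invertibility of $G_r(T,k)$.

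For positivity, we show that $Q_{k+1}^{-1}+\bar B_k(\ldots)$ keeps inertia. By the determinant identity $|I+\bar B_k^\top Q_{k+1}^{-1}\bar B_k| = |Q_k|\,|A_k|^2/|Q_{k+1}|$, the matrix never becomes singular, so a continuity and inertia argument in $k$, anchored where the sign is known, gives positive definiteness. I expect the bookkeeping of this inertia argument to be the main obstacle.
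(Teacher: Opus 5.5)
The paper gives no proof of this statement; it imports it from \cite[Proposition 3]{ito2023maximum}, so your proposal has to stand on its own. Steps 1 and 2 are correct. The closed-loop matrix is indeed $Q_{k+1}A_k^{-\top}Q_k^{-1}$, and $Y_k:=Q_k^{-1}\Sigma_{x_k}Q_k^{-1}-Q_k^{-1}$ satisfies $Y_{k+1}=A_k^{-\top}Y_kA_k^{-1}$. Set $\hat Q:=G_c(T,0)^{-\frac12}Q_0G_c(T,0)^{-\frac12}$ and $N:=S_0^{\frac12}(I-\hat Q^{-1})S_0^{\frac12}$. Then $\Sigma_{x_T}=\Sigma_{\mathrm{fin}}$ becomes $N^2+N=S_0^{\frac12}S_TS_0^{\frac12}$, which is not quite the form you guess in Step 3. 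The stated $Q_0$ is the choice $N=(S_0^{\frac12}S_TS_0^{\frac12}+\frac14 I)^{\frac12}-\frac12 I=S_0-\mathcal{F}$.

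\textbf{The gap is Step 4.} Monotonicity of $G_c(k,0)$, together with invertibility of $Q_0$ and $Q_0-G_c(T,0)$, does not stop an eigenvalue of $Q_0-G_c(k,0)$ from crossing zero at an intermediate $k$. For instance, with scalars $A_k=\bar B_k=1$, $T=2$ and $Q_0=1$, you get $Q_1=0$ although $Q_0,Q_2\neq 0$. What prevents this is sign information you never extract. Since $S_T\succ 0$ and the principal root is taken, $N\succ 0$, i.e.\ $\hat Q^{-1}\prec I$, so $\hat Q$ has no eigenvalue in $[0,1]$. Hence $Q_0$ and $Q_0-G_c(T,0)$ have the same number of negative eigenvalues. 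By Weyl monotonicity, no eigenvalue of $Q_0-G_c(k,0)$ can then vanish, and $\iota_-(Q_k)$ is constant in $k$, where $\iota_\pm(\cdot)$ count positive and negative eigenvalues. The hypotheses you invoke do not do this job. The second matrix equals $I-\mathcal{F}$, and its invertibility is equivalent to that of $Q_0-\Sigma_{\mathrm{ini}}$, which says nothing about $Q_0-G_c(k,0)$. The split at $k_r$ plays no role in this step.

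Your positivity argument also fails as stated. Here $k$ is discrete, so there is no continuity in $k$. The determinant identity gives only nonsingularity, not sign, and there is no anchor, since $Q_k$ can be indefinite for every $k$. The missing tool is Haynsworth inertia additivity applied to $\left[\begin{smallmatrix} Q_{k+1} & \bar B_k\\ \bar B_k^\top & -I\end{smallmatrix}\right]$, together with $Q_{k+1}+\bar B_k\bar B_k^\top=A_kQ_kA_k^\top$. This yields
\[
\iota_+\bigl(I+\bar B_k^\top Q_{k+1}^{-1}\bar B_k\bigr)=m+\iota_-(Q_k)-\iota_-(Q_{k+1}),
\]
so constancy of $\iota_-(Q_k)$ is exactly equivalent to $I+\bar B_k^\top\Pi_{k+1}\bar B_k\succ 0$ for all $k$.

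\textbf{Uniqueness.} The same identity is what your uniqueness step needs, and you only gesture at it. Take an invertible $F$ that meets the hypotheses of Proposition~\ref{prop:optimal policy of MEOCP} and produces $\Sigma_{\mathrm{fin}}$. Then all $\Pi_k$ are invertible, and $Q_k:=\Pi_k^{-1}$ solves \eqref{eq:lyapunov difference equation}. Positivity forces $\iota_-(Q_0)=\iota_-(Q_T)$, so $\hat Q$ has no eigenvalue in $[0,1]$ and $N\succ 0$. Therefore $N+\frac12 I$ must be the positive definite square root of $S_0^{\frac12}S_TS_0^{\frac12}+\frac14 I$. This selection is essential, because the quadratic equation has many other symmetric solutions.

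However, your parametrization never reaches singular $F$. If $Fv=0$, then $\Pi_k\Phi(k,T)v=0$ for every $k$, so no $Q_k$ exists. A singular $F$ can satisfy the hypotheses of Proposition~\ref{prop:optimal policy of MEOCP} (e.g.\ $F=0$), so a singular $F$ producing $\Sigma_{\mathrm{fin}}$ must be excluded separately. One clean option is the path-space representation, which handles every $F$ at once:
\begin{itemize}
\item Under $\hat\pi^{\mathrm{ME}}$, the conditional density of $x_T$ given $x_0$ is proportional to $\mathcal{N}(\Phi(T,0)x_0,G_r(T,0))(x_T)\,e^{-\frac12\|x_T\|_F^2}$.
\item Its precision $W:=G_r(T,0)^{-1}+F$ is forced to be $\succ 0$ by the positivity hypothesis, through finiteness of the soft value function.
\item The equation $W^{-1}+W^{-1}G_r(T,0)^{-1}\Phi(T,0)\Sigma_{\mathrm{ini}}\Phi(T,0)^\top G_r(T,0)^{-1}W^{-1}=\Sigma_{\mathrm{fin}}$ has exactly one positive definite solution $W$, which determines $F$.
\end{itemize}
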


From Proposition \ref{prop:proper choice of F in MEOCP}, the weight matrix that steers $\Sigma_{x_{T}}$ to $\Sigma_{\mathrm{fin}}$ under \eqref{eq:linear system of MaxEnt}--\eqref{eq:initial condition of MaxEnt} and \eqref{eq:optimal policy of MEOCP} is uniquely given by $F_{\Sigma_{\mathrm{fin}}}^{\mathrm{ME}} = Q_{T}^{-1}$.
See \cite[Remarks 3--5]{ito2023maximum} for the interpretation and practical relevance of the assumptions of Proposition \ref{prop:proper choice of F in MEOCP}.

\subsection{MaxEnt Optimal Density Control}

The following MaxEnt optimal density control problem aims to steer the state of a linear system to a desired distribution and thus imposes a terminal constraint unlike Problem \ref{prob:MEOCP}.

\begin{prob}
    Find a policy $\pi=\{\pi_{k}\}_{k=0}^{T-1}$ that solves
    \begin{align}
        &\min_{\pi} \mathbb{E}\left[ \sum_{k=0}^{T-1} \left\{\frac{1}{2}\|u_{k}\|^{2} - \mathcal{H}(\pi_{k}(\cdot|x_{k})) \right\} \right] \label{eq:objective function of MEODCP} \\
        &\mbox{s.t. }\eqref{eq:linear system of MaxEnt}\text{--}\eqref{eq:initial condition of MaxEnt}, \nonumber\\
        &\hspace{19pt}x_{T} \sim \mathcal{N}(0, \Sigma_{\mathrm{fin}}), \label{eq:terminal condition of MaxEnt}
    \end{align}
    where $\Sigma_{\mathrm{fin}} \succ 0$.
  
    \label{prob:MEODCP}
\end{prob}

To obtain the optimal policy of Problem \ref{prob:MEODCP}, we will exploit the following relationship between MaxEnt optimal control and optimal density control.

\begin{lem}
    Assume that the assumptions of Proposition \ref{prop:proper choice of F in MEOCP} hold.
    Then, the optimal policy $\pi^{\mathrm{ME}}$ of Problem \ref{prob:MEODCP} is uniquely given by the unique optimal policy $\hat{\pi}^{\mathrm{ME}}$ of Problem \ref{prob:MEOCP} with weight matrix $F_{\Sigma_{\mathrm{fin}}}^{\mathrm{ME}}$.
    \label{lem:relationship between MEOCP and MEODCP}
\end{lem}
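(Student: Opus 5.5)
The argument is a Lagrangian-type exchange between the hard terminal constraint and the terminal penalty. Write $J(\pi)$ for the objective \eqref{eq:objective function of MEODCP} of Problem \ref{prob:MEODCP}. Write $J_F(\pi)$ for the objective \eqref{eq:objective function of MEOCP} of Problem \ref{prob:MEOCP} with weight $F=F^{\mathrm{ME}}_{\Sigma_{\mathrm{fin}}}$. Then
\begin{align*}
J_F(\pi) = J(\pi) + \tfrac{1}{2}\mathbb{E}\bigl[\|x_T\|_F^2\bigr] = J(\pi) + \tfrac{1}{2}\mathrm{Tr}\bigl(F\,\mathbb{E}[x_Tx_T^{\top}]\bigr).
\end{align*}
For any policy $\pi$ feasible for Problem \ref{prob:MEODCP}, we have $x_T\sim\mathcal{N}(0,\Sigma_{\mathrm{fin}})$. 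Hence $\mathbb{E}[x_Tx_T^\top]=\Sigma_{\mathrm{fin}}$, and the extra term equals the constant $\frac12\mathrm{Tr}(F\Sigma_{\mathrm{fin}})$, which does not depend on $\pi$. This holds even though $F$ may be indefinite, because only the second moment enters.

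The steps are as follows.

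\textbf{Step 1.} Invoke Proposition \ref{prop:proper choice of F in MEOCP}. It gives that $F=Q_T^{-1}$ is well defined and that the Riccati solution satisfies $I+\bar B_k^\top\Pi_{k+1}\bar B_k\succ0$. Proposition \ref{prop:optimal policy of MEOCP} then applies, so $\hat\pi^{\mathrm{ME}}$ is the unique minimizer of $J_F$ over all policies.

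\textbf{Step 2.} Check that $\hat\pi^{\mathrm{ME}}$ is feasible for Problem \ref{prob:MEODCP}. Under the linear Gaussian policy \eqref{eq:optimal policy of MEOCP} with zero-mean Gaussian $x_0$, the closed loop $x_{k+1}=(A_k-\bar B_k\Sigma_{\hat\pi_k^{\mathrm{ME}}}\bar B_k^\top\Pi_{k+1}A_k)x_k+\bar B_k w_k$, with $w_k\sim\mathcal{N}(0,\Sigma_{\hat\pi_k^{\mathrm{ME}}})$ independent, keeps $x_k$ zero-mean Gaussian. Its terminal covariance is $\Sigma_{\mathrm{fin}}$ by the defining property of $F^{\mathrm{ME}}_{\Sigma_{\mathrm{fin}}}$ in Proposition \ref{prop:proper choice of F in MEOCP}. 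Hence $x_T\sim\mathcal{N}(0,\Sigma_{\mathrm{fin}})$.

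\textbf{Step 3.} Conclude optimality and uniqueness. For any feasible $\pi$,
\begin{align*}
J(\pi)=J_F(\pi)-\tfrac12\mathrm{Tr}(F\Sigma_{\mathrm{fin}})\ \ge\ J_F(\hat\pi^{\mathrm{ME}})-\tfrac12\mathrm{Tr}(F\Sigma_{\mathrm{fin}})=J(\hat\pi^{\mathrm{ME}}).
\end{align*}
If equality holds, then $\pi$ also minimizes $J_F$, so $\pi=\hat\pi^{\mathrm{ME}}$ by the uniqueness in Proposition \ref{prop:optimal policy of MEOCP}. Therefore $\pi^{\mathrm{ME}}=\hat\pi^{\mathrm{ME}}$ is the unique optimal policy.

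The main obstacle is the technical care needed in Steps 1 and 3. First, the uniqueness and optimality in Proposition \ref{prop:optimal policy of MEOCP} must hold over the same admissible policy class as Problem \ref{prob:MEODCP}, namely all conditional densities with finite entropy and finite second moments, so that $J_F(\pi)$ is well defined and the expectation identity is valid. Second, when $F$ is indefinite, one must ensure $J_F$ is bounded below over that class. This follows from the completion-of-squares argument behind Proposition \ref{prop:optimal policy of MEOCP}, given the positivity condition $I+\bar B_k^\top\Pi_{k+1}\bar B_k\succ0$ furnished by Proposition \ref{prop:proper choice of F in MEOCP}. I would restate that dynamic-programming bound briefly to confirm it holds over the general policy class.
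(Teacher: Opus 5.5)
Your proposal is correct and takes essentially the same approach as the paper. On the feasible set, the terminal penalty $\frac{1}{2}\mathbb{E}[\|x_T\|_{F}^{2}]$ equals the constant $\frac{1}{2}\mathrm{Tr}(F\Sigma_{\mathrm{fin}})$, so $\hat{\pi}^{\mathrm{ME}}$, as the feasible unique minimizer of the penalized problem, is the unique optimal policy of Problem \ref{prob:MEODCP}; you simply spell out the feasibility check and the direct inequality that the paper's short contradiction argument leaves implicit.
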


\begin{proof}
    For any policy that induces the terminal state distribution $\mathcal{N}(0,\Sigma_{\mathrm{fin}})$ under \eqref{eq:linear system of MaxEnt}--\eqref{eq:initial condition of MaxEnt}, the terminal cost of Problem \ref{prob:MEOCP} with weight matrix $F_{\Sigma_{\mathrm{fin}}}^{\mathrm{ME}}$ takes the same value $\frac{1}{2}\mathbb{E}[\|x_{T}\|_{F_{\Sigma_{\mathrm{fin}}}^{\mathrm{ME}}}^{2}] = \frac{1}{2}\mathrm{Tr}[F_{\Sigma_{\mathrm{fin}}}^{\mathrm{ME}}\Sigma_{\mathrm{fin}}]$.
    Thus, it contradicts the fact that $\hat{\pi}^{\mathrm{ME}}$ is the unique optimal policy of Problem \ref{prob:MEOCP} with weight matrix $F_{\Sigma_{\mathrm{fin}}}^{\mathrm{ME}}$ if $\hat{\pi}^{\mathrm{ME}}$ is not the unique optimal solution to Problem \ref{prob:MEODCP}.
\end{proof}

By combining Proposition \ref{prop:proper choice of F in MEOCP} and Lemma \ref{lem:relationship between MEOCP and MEODCP}, we can construct the unique optimal policy of Problem \ref{prob:MEODCP} by setting the weight matrix as $F=F_{\Sigma_{\mathrm{fin}}}^{\mathrm{ME}}$ and adopting the optimal policy of Problem \ref{prob:MEOCP}.

\begin{prop}[{\cite[Theorem 1]{ito2023maximum}}]
    Suppose that the assumptions of Proposition \ref{prop:proper choice of F in MEOCP} hold.
    Then, the unique optimal policy of Problem \ref{prob:MEODCP} is given by
    \begin{align}
        \pi_{k}^{\mathrm{ME}}(\cdot|x) = \mathcal{N}(\mu_{\pi_{k}^{\mathrm{ME}}}, \Sigma_{\pi_{k}^{\mathrm{ME}}}),  k \in \llbracket 0,T-1 \rrbracket,\label{eq:optimal policy of MEODCP}
    \end{align}
    where $Q_{k}$ is the solution to \eqref{eq:lyapunov difference equation} and \eqref{eq:initial condition of lyapunov difference equation},
    \begin{align}
        \Sigma_{\pi_{k}^{\mathrm{ME}}}:=&\left(I + \bar{B}_{k}^{\top}Q_{k+1}^{-1}\bar{B}_{k}\right)^{-1},\label{eq:covariance matrix of optimal policy of MEODCP}\\
        \mu_{\pi_{k}^{\mathrm{ME}}} :=&  -\Sigma_{\pi_{k}^{\mathrm{ME}}}\bar{B}_{k}^{\top}Q_{k+1}^{-1}A_{k}x.\label{eq:mean of optimal policy of MEODCP}
    \end{align}
  
    \label{prop:optimal policy of MEODCP}
\end{prop}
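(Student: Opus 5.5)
The plan is to combine Lemma \ref{lem:relationship between MEOCP and MEODCP} with Propositions \ref{prop:optimal policy of MEOCP} and \ref{prop:proper choice of F in MEOCP}. Under the standing assumptions, Proposition \ref{prop:proper choice of F in MEOCP} supplies the weight $F_{\Sigma_{\mathrm{fin}}}^{\mathrm{ME}} = Q_{T}^{-1}$. Here $Q_{k}$ solves the Lyapunov difference equation \eqref{eq:lyapunov difference equation} with initial condition \eqref{eq:initial condition of lyapunov difference equation}, and $Q_{k}$ is invertible for all $k$. Lemma \ref{lem:relationship between MEOCP and MEODCP} then states that the unique optimal policy of Problem \ref{prob:MEODCP} equals the unique optimal policy $\hat{\pi}^{\mathrm{ME}}$ of Problem \ref{prob:MEOCP} with $F = Q_{T}^{-1}$. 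What remains is to write $\hat{\pi}^{\mathrm{ME}}$ explicitly in terms of $Q_{k}$.

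To do this, I will use the second half of Proposition \ref{prop:proper choice of F in MEOCP}. With terminal condition $\Pi_{T} = F = Q_{T}^{-1}$, the Riccati solution \eqref{eq:Riccati difference equation of MEOCP} satisfies $I + \bar{B}_{k}^{\top}\Pi_{k+1}\bar{B}_{k} \succ 0$ and $\Pi_{k} = Q_{k}^{-1}$ for every $k \in \llbracket 0, T-1\rrbracket$. Together with $\Pi_{T} = Q_{T}^{-1}$, this gives $\Pi_{k+1} = Q_{k+1}^{-1}$ for all $k \in \llbracket 0, T-1\rrbracket$. The hypothesis of Proposition \ref{prop:optimal policy of MEOCP} is therefore met, and it yields $\hat{\pi}_{k}^{\mathrm{ME}}(\cdot|x) = \mathcal{N}(\mu_{\hat{\pi}_{k}^{\mathrm{ME}}}, \Sigma_{\hat{\pi}_{k}^{\mathrm{ME}}})$. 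Substituting $\Pi_{k+1} = Q_{k+1}^{-1}$ into \eqref{eq:covariance matrix of optimal policy of MEOCP} and \eqref{eq:mean of optimal policy of MEOCP} gives exactly \eqref{eq:covariance matrix of optimal policy of MEODCP} and \eqref{eq:mean of optimal policy of MEODCP}.

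One point needs care. Proposition \ref{prop:optimal policy of MEOCP} assumes the Riccati solution is unique. Uniqueness is automatic here, because the backward recursion is well defined once the inverse exists at each step, and that invertibility comes from $I + \bar{B}_{k}^{\top}\Pi_{k+1}\bar{B}_{k} \succ 0$. We should also confirm that the policy really attains the terminal constraint, so that it is feasible for Problem \ref{prob:MEODCP}. This holds by the defining property of $F_{\Sigma_{\mathrm{fin}}}^{\mathrm{ME}}$ as established in Proposition \ref{prop:proper choice of F in MEOCP}.

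The main obstacle is this bookkeeping around the Riccati–Lyapunov identification, which is already contained in Proposition \ref{prop:proper choice of F in MEOCP}. If independent verification were wanted, I would set $\Pi_{k} = Q_{k}^{-1}$ and check the recursion directly. Applying the matrix inversion lemma to $Q_{k+1} = A_{k}Q_{k}A_{k}^{\top} - \bar{B}_{k}\bar{B}_{k}^{\top}$ gives
\[
A_{k}^{\top}Q_{k+1}^{-1}A_{k} - A_{k}^{\top}Q_{k+1}^{-1}\bar{B}_{k}\bigl(I+\bar{B}_{k}^{\top}Q_{k+1}^{-1}\bar{B}_{k}\bigr)^{-1}\bar{B}_{k}^{\top}Q_{k+1}^{-1}A_{k} = A_{k}^{\top}\bigl(Q_{k+1}+\bar{B}_{k}\bar{B}_{k}^{\top}\bigr)^{-1}A_{k} = Q_{k}^{-1},
\]
which uses the invertibility of $A_{k}$.
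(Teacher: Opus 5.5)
Your proposal is correct and takes essentially the same route as the paper. You combine Lemma~\ref{lem:relationship between MEOCP and MEODCP} with Proposition~\ref{prop:proper choice of F in MEOCP}, which supplies $F=Q_{T}^{-1}$ and $\Pi_{k}=Q_{k}^{-1}$, and substitute into the formulas of Proposition~\ref{prop:optimal policy of MEOCP}; your optional matrix-inversion-lemma check of $\Pi_{k}=Q_{k}^{-1}$ is also correct but redundant, since that identification is already part of Proposition~\ref{prop:proper choice of F in MEOCP}.
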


\section{Formulation of MI Optimal Density Control} \label{sec:Formulation of MI Optimal Density Control}

Now, we formulate the MI optimal density control problem of linear systems as follows.

\begin{prob}
    Find a pair of a policy $\pi = \{\pi_{k}\}_{k=0}^{T-1}$ and a prior $\rho = \{\rho_{k}\}_{k=0}^{T-1}$ that solves
    \begin{align}
        &\min_{\pi, \rho \in \mathcal{R}_{0}} J(\pi,\rho):=\mathbb{E}\left[ \sum_{k=0}^{T-1} \frac{1}{2}\|u_{k}\|^{2} + \mathcal{D}_{\mathrm{KL}}[\pi_{k}(\cdot|x_{k}) \| \rho_{k}]  \right] \label{eq:objective function of MIODCP}\\
        &\mbox{s.t. }x_{k+1} = A_{k} x_{k} + B_{k} u_{k}, \label{eq:linear system of MIODCP}\\
        &\hspace{19pt}u_{k} \sim \pi_{k}(\cdot|x) \ \mbox{given }x=x_{k}, \label{eq:stochastic input of MIODCP}\\
        &\hspace{19pt}x_{0} \sim \mathcal{N}(0, \Sigma_{\mathrm{ini}}), \label{eq:initial condition of MIODCP}\\
        &\hspace{19pt}x_{T} \sim \mathcal{N}(0, \Sigma_{\mathrm{fin}}), \label{eq:terminal condition of MIODCP}
    \end{align}
    where $T \in \mathbb{Z}_{\geq 1}, x_{k} \in \mathbb{R}^{n}, u_{k} \in \mathbb{R}^{m}, A_{k} \in \mathbb{R}^{n \times n}, B_{k} \in \mathbb{R}^{n \times m}, \Sigma_{\mathrm{ini}}, \Sigma_{\mathrm{fin}} \succ 0$.
    A prior $\rho_{k}$ denotes a PDF on $\mathbb{R}^{m}$. 
    The prior class $\mathcal{R}_{0}$ is defined as
        \begin{align*}
        \mathcal{R}_{0} := \left\{  \rho = \{ \rho_{k} \}_{k=0}^{T-1} \mid \rho_{k} = \mathcal{N}(0, \Sigma_{\rho_{k}}),\Sigma_{\rho_{k}} \succ 0 \right\}.
    \end{align*}
    Note that a prior $\rho_{k}$ is a feedforward policy while a policy $\pi_{k}$ is a state feedback one.
    In addition, the input matrices $B_{k}$ in Problem \ref{prob:MIODCP} and $\bar{B}_{k}$ in Problem \ref{prob:MEOCP} are defined independently.
  
    \label{prob:MIODCP}
\end{prob}

We outline the rationale behind investigating Problem \ref{prob:MIODCP}.
By constraining $\rho$ to a uniform distribution, $p^{\mathrm{uni}}(u) \propto 1$, Problem \ref{prob:MIODCP} is reduced to Problem \ref{prob:MEODCP} because the KL divergence term in \eqref{eq:objective function of MIODCP} can be formally rewritten as
\begin{align*}
    \mathcal{D}_{\mathrm{KL}}[\pi_{k}(\cdot|x)\|p^{\mathrm{uni}}(\cdot)]=-\mathcal{H}(\pi_{k}(\cdot|x)) + (\mathrm{constant}).
\end{align*}
Traditionally, incorporating stochastic inputs through entropy regularization is known for encouraging exploration in model-free RL.
However, it also yields advantages in model-based control, such as enhanced robustness against disturbances \cite{hazan2019provably} and an equivalence to probabilistic inference \cite{levine2018reinforcement}.
Within this context, $\rho$ represents either the parameters defining the disturbance uncertainty set or the prior distribution in the inference formulation.
While the joint optimization of $\rho$ and $\pi$ was initially proposed in RL to dynamically tune exploration \cite{leibfried2020mutual, grau2018soft}, optimizing $\rho$ in model-based control functions as the automatic adjustment of the disturbance uncertainty set or the prior distribution.
From the former perspective, this adjustment balances control performance and conservatism, while from the latter, it provides a framework that optimizes not only the posterior but also the prior distribution, whose design is often nontrivial.
Furthermore, privacy preservation serves as an additional motivation \cite{cundy2024privacy}. When the optimization is performed exclusively over $\rho$, the regularization term becomes equivalent to the mutual information between $x_{k}$ and $u_{k}$, expressed as $\min_{\rho_{k}}\mathbb{E}[\mathcal{D}_{\mathrm{KL}}[\pi_{k}(\cdot|x_{k})\|\rho_{k}]]=\mathcal{I}(x_{k},u_{k})$ \cite{grau2018soft, leibfried2020mutual, enami2025mutual}.
This equivalence is the reason we refer to Problem \ref{prob:MIODCP} as an MI optimal density control problem.
This regularization has the effect of making it difficult to infer the state from the input, and vice versa.

For tractability, we assume the Gaussian prior class $\mathcal{R}_{0}$.
However, the calculation of the optimal solution to Problem \ref{prob:MIODCP} is still challenging due to its nonconvexity.
Instead, inspired by \cite{leibfried2020mutual, enami2025mutual}, we will propose an alternating optimization algorithm that optimizes the policy and the prior alternatively in Section \ref{sec:Alternating Optimization for MI Optimal Density Control}.

\begin{rem}
    One may consider the stage cost $\frac{1}{2}\|u_{k}\|_{R_{k}}^{2} + \varepsilon \mathcal{D}_{\mathrm{KL}}[\pi_{k}(\cdot|x_{k})\|\rho_{k}]$ weighted by the coefficients $R_{k}\succ 0$ and $\varepsilon>0$.
    Actually, this case can be reduced to Problem \ref{prob:MIODCP}, where $R_{k}=I$ and $\varepsilon=1$, without loss of generality.
    To show this fact, let us consider a variable change $u_{k}^{R_{k},\varepsilon}:=\frac{1}{\sqrt{\varepsilon}}R_{k}^{\frac{1}{2}}u_{k}$.
    In addition, denote the probability distributions of $u_{k}^{R_{k},\varepsilon}$ corresponding to $\pi_{k}(\cdot|x_{k})$ and $\rho_{k}$ by $\pi_{k}^{R_{k},\varepsilon}(\cdot|x_{k})$ and $\rho_{k}^{R_{k},\varepsilon}$, respectively.
    By the bijectivity of this variable change and the data processing inequality \cite{cover1999elements}, we have
    \begin{align*}
        \mathcal{D}_{\mathrm{KL}}[\pi_{k}(\cdot |x_{k})\|\rho_{k}]=\mathcal{D}_{\mathrm{KL}}\left[\pi_{k}^{R_{k},\varepsilon}(\cdot|x_{k}) \| \rho_{k}^{R_{k},\varepsilon}\right].
    \end{align*}
    Then, the objective function \eqref{eq:objective function of MIODCP} can be rewritten as
    \begin{align*}
        \varepsilon \mathbb{E}\left[ \sum_{k=0}^{T-1} \frac{1}{2}\left\|u_{k}^{R_{k},\varepsilon}\right\|^{2} + \mathcal{D}_{\mathrm{KL}}\left[\pi_{k}^{R_{k},\varepsilon}(\cdot|x_{k}) \| \rho_{k}^{R_{k},\varepsilon}\right] \right].
    \end{align*}
    Furthermore, it follows that $\rho \in \mathcal{R}_{0} \Leftrightarrow \{\rho_{k}^{R_{k},\varepsilon}\}_{k=0}^{T-1} \in \mathcal{R}_{0}$.
    Therefore, by replacing $B_{k}$ with $\sqrt{\varepsilon}B_{k}R_{k}^{-1/2}$, we can assume $R_{k}=I$ and $\varepsilon=1$ without loss of generality.
    \label{rem:identical cost coefficients}
  
\end{rem}

\begin{rem}
    Problem \ref{prob:MIODCP} assumes the prior class $\mathcal{R}_{0}$ where the mean of $\rho_{k}$ is zero.
    One may be interested in the following prior class
    \begin{align}
        \mathcal{R} := &\left\{  \rho = \{ \rho_{k} \}_{k=0}^{T-1} \mid \rho_{k} = \mathcal{N}(\mu_{\rho_{k}}, \Sigma_{\rho_{k}}),\right. \nonumber\\
        &\left.\hspace{6pt}\mu_{\rho_{k}} \in \mathbb{R}^{m},\Sigma_{\rho_{k}} \succ 0 \right\}, \label{eq:nonzero mean prior class}
    \end{align} 
    which is the nonzero mean version of $\mathcal{R}_{0}$.
    In fact, as will be shown by Proposition \ref{prop:optimal mean of prior is zero}, the nonzero mean prior class $\mathcal{R}$ can be reduced to $\mathcal{R}_{0}$ in Problem \ref{prob:MIODCP} without loss of generality.
    On the basis of this, we assume $\mathcal{R}_{0}$ in Problem \ref{prob:MIODCP}.
    In addition, Problem \ref{prob:MIODCP} with nonzero mean marginal constraints $x_{0}\sim \mathcal{N}(\mu_{\mathrm{ini}},\Sigma_{\mathrm{ini}}), x_{T}\sim \mathcal{N}(\mu_{\mathrm{fin}},\Sigma_{\mathrm{fin}})$ (referred to as Problem \ref{prob:MIODCP in general case}) is also of interest.
    Problem \ref{prob:MIODCP in general case} will be investigated in Section \ref{sec:Alternating Optimization for MI Optimal Density Control}.
    Note that Problem \ref{prob:MIODCP in general case} will assume the nonzero mean prior class $\mathcal{R}$ rather than $\mathcal{R}_{0}$.
    \label{rem:validity of nonzero mean prior class}
  
\end{rem}

\section{Alternating Optimization for MI Optimal Density Control} \label{sec:Alternating Optimization for MI Optimal Density Control}

In this section, we construct the alternating optimization algorithm of Problem \ref{prob:MIODCP}.
We first derive the optimal policy and the optimal prior for a fixed prior and a fixed policy, respectively.
By combining them, we construct the alternating optimization algorithm.
Furthermore, we extend this algorithm to a more general case where the initial and terminal distributions involve nonzero means.

\subsection{Optimal Policy for Fixed Prior}
\label{subsec:Optimal Policy for Fixed Prior}
\begin{figure*}[h]
    \begin{center}
    \centerline{\includegraphics[width=150mm]{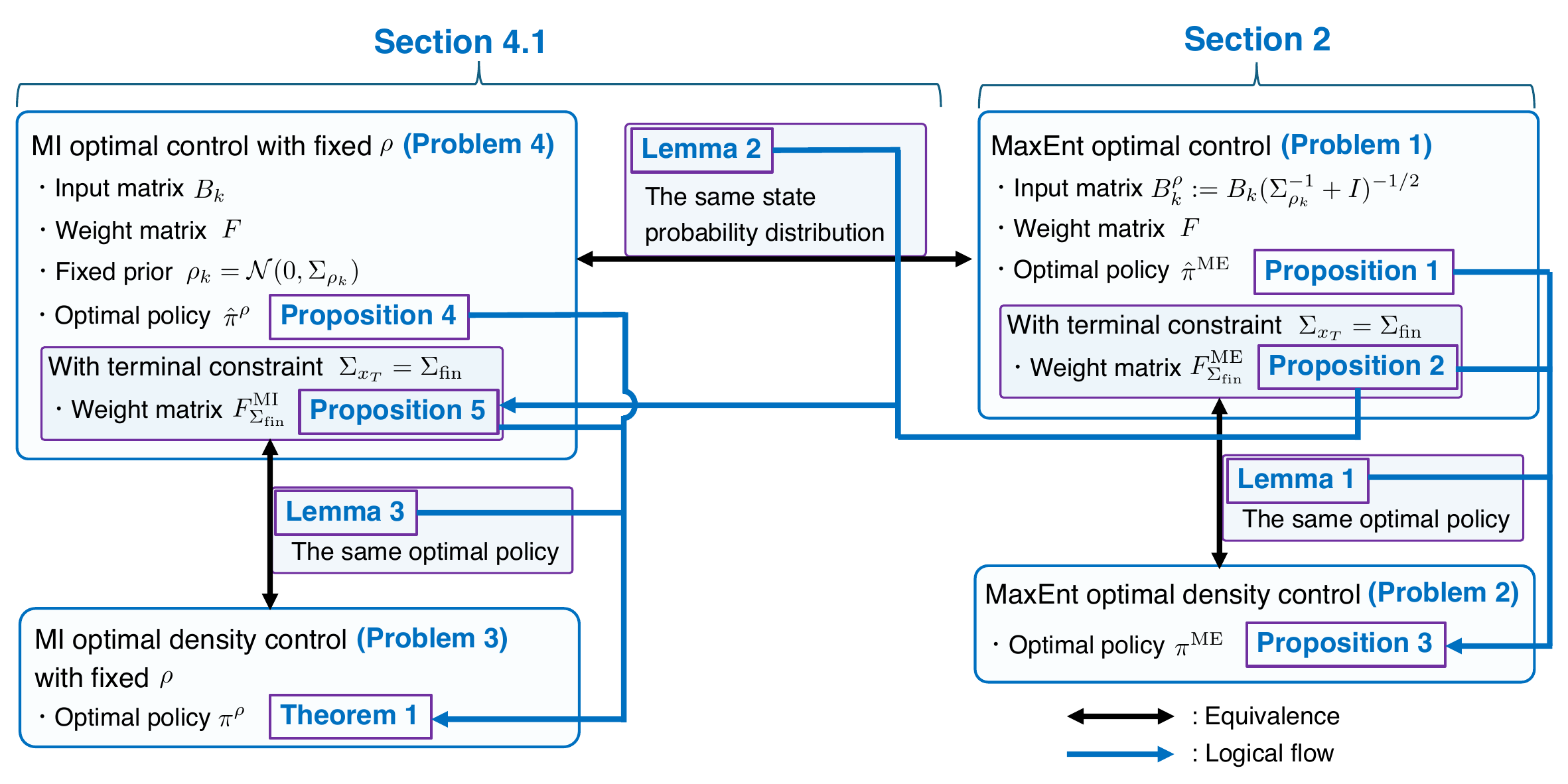}}
    \caption{Illustrative outline of Section \ref{subsec:Optimal Policy for Fixed Prior}}
    \label{fig:optimal_policy}
    \end{center}
\end{figure*}

The goal of this subsection is to derive the optimal policy of Problem \ref{prob:MIODCP} with $\rho \in \mathcal{R}_{0}$ fixed.
Because the derivation is complicated, let us show the outline of this subsection as described in Fig. \ref{fig:optimal_policy}.
We will first introduce an MI optimal control problem, namely the soft terminal constrained MI optimal density control problem, with a fixed $\rho$ (Problem \ref{prob:MIOCP with fixed prior}), and derive its unique optimal policy (Proposition \ref{prop:optimal policy of MIOCP with fixed prior}).
Next, Lemma \ref{lem:equivalence between MEOCP} will reveal that Problem \ref{prob:MIOCP with fixed prior} is equivalent, in the sense of the state probability distribution, to Problem \ref{prob:MEOCP} with the input matrix $\bar{B}_{k}$ set as
\begin{align*}
    \bar{B}_{k}=B_{k}^{\rho}:=B_{k}(\Sigma_{\rho_{k}}^{-1}+I)^{-1/2}
\end{align*}
for the given $\rho$.
Using this equivalence, Proposition \ref{prop:proper choice of F in MIOCP} and Lemma \ref{lem:relationship between MIOCP and MIODCP with fixed prior} describe the relationship between Problem \ref{prob:MIODCP} with fixed $\rho$ and Problem \ref{prob:MIOCP with fixed prior}.
On the basis of these observations, Theorem \ref{thm:optimal policy of MIODCP with prior fixed} finally derives the optimal policy of Problem \ref{prob:MIODCP} with fixed $\rho$.

The MI optimal control problem with fixed $\rho$ is formulated as follows.
\begin{prob}
    For a given prior $\rho \in \mathcal{R}_{0}, \rho_{k}=\mathcal{N}(0,\Sigma_{\rho_{k}})$, find a policy $\pi=\{\pi_{k}\}_{k=0}^{T-1}$ that solves
    \begin{align}
        &\min_{\pi} \mathbb{E}\left[ \sum_{k=0}^{T-1} \left\{\frac{1}{2}\|u_{k}\|^{2} + \mathcal{D}_{\mathrm{KL}}[\pi_{k}(\cdot|x_{k}) \| \rho_{k}] \right\}+ \frac{1}{2}\|x_{T}\|_{F}^{2} \right] \label{eq:objective function of MIOCP with fixed prior}\\
        &\mbox{s.t. }\eqref{eq:linear system of MIODCP}\text{--}\eqref{eq:initial condition of MIODCP}, \nonumber
    \end{align}
    where $F \in \mathbb{S}^{n}$.
  
    \label{prob:MIOCP with fixed prior}
\end{prob}

The unique optimal policy of Problem \ref{prob:MIOCP with fixed prior} is given as follows.

\begin{prop}[{\cite[Theorem 1]{enami2025mutual}}]
    Assume that $\Gamma_{k}$ satisfies $\Sigma_{\rho_{k}}^{-1}+ I + B_{k}^{\top}\Gamma_{k+1}B_{k} \succ 0$ for any $k \in \llbracket 0, T-1 \rrbracket$ and is the unique solution to the following Riccati equation:
    \begin{align}
        \Gamma_{k} = & A_{k}^{\top} \Gamma_{k+1} A_{k} -A_{k}^{\top} \Gamma_{k+1} B_{k} (\Sigma_{\rho_{k}}^{-1}+ I \nonumber \\
        &+ B_{k}^{\top}\Gamma_{k+1}B_{k})^{-1}  B_{k}^{\top} \Gamma_{k+1} A_{k},k \in \llbracket 0, T-1\rrbracket ,\label{eq:Riccati difference equation of MIOCP}\\
        \Gamma_{T} = & F. \label{eq:terminal condition of Riccati difference equation of MIOCP}
    \end{align}
    Then, the unique optimal policy $\hat{\pi}^{\rho}$ of Problem \ref{prob:MIOCP with fixed prior} is given by
    \begin{align}
        \hat{\pi}_{k}^{\rho}(\cdot|x) = \mathcal{N}(\mu_{\hat{\pi}_{k}^{\rho}}, \Sigma_{\hat{\pi}_{k}^{\rho}}),  k \in \llbracket 0,T-1 \rrbracket,\label{eq:optimal policy of MIOCP with fixed prior}
    \end{align}
    where
    \begin{align}
        \Sigma_{\hat{\pi}_{k}^{\rho}}=& (\Sigma_{\rho_{k}}^{-1}+ I + B_{k}^{\top}\Gamma_{k+1}B_{k})^{-1},\label{eq:covariance matrix of optimal policy of MIOCP with fixed prior}\\
        \mu_{\hat{\pi}_{k}^{\rho}} =& -\Sigma_{\hat{\pi}_{k}^{\rho}}B_{k}^{\top}\Gamma_{k+1}A_{k}x.\label{eq:mean of optimal policy of MIOCP with fixed prior}
    \end{align}
  
    \label{prop:optimal policy of MIOCP with fixed prior}
\end{prop}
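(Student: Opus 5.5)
We prove the statement by backward dynamic programming, showing that the value function stays quadratic at every stage. Define the cost-to-go
\begin{align*}
V_k(x):=\min_{\pi_k,\dots,\pi_{T-1}}\mathbb{E}\Big[\textstyle\sum_{j=k}^{T-1}\big\{\tfrac12\|u_j\|^2+\mathcal{D}_{\mathrm{KL}}[\pi_j(\cdot|x_j)\|\rho_j]\big\}+\tfrac12\|x_T\|_F^2\,\Big|\,x_k=x\Big].
\end{align*}
The induction hypothesis is $V_k(x)=\frac12\|x\|_{\Gamma_k}^2+c_k$ with $c_k$ a constant independent of $x$. At $k=T$ it holds with $\Gamma_T=F$ and $c_T=0$. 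Because the system is Markov and the cost is additive, the principle of optimality gives
\begin{align*}
V_k(x)=\min_{\pi_k(\cdot|x)}\int \pi_k(u|x)\Big\{\tfrac12\|u\|^2+\log\frac{\pi_k(u|x)}{\rho_k(u)}+\tfrac12\|A_kx+B_ku\|_{\Gamma_{k+1}}^2\Big\}du+c_{k+1},
\end{align*}
and this minimization can be carried out pointwise in $x$.

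The key step is the Gibbs variational principle. For a measurable function $g$ with $Z:=\int\rho_k(u)e^{-g(u)}du<\infty$,
\begin{align*}
\int\pi(g+\log(\pi/\rho_k))\,du=\mathcal{D}_{\mathrm{KL}}\big[\pi\,\|\,\rho_k e^{-g}/Z\big]-\log Z .
\end{align*}
Hence the unique minimizer is $\pi^\ast\propto\rho_k e^{-g}$. Here $g(u)=\frac12\|u\|^2+\frac12\|A_kx+B_ku\|^2_{\Gamma_{k+1}}$. Combining this with $\rho_k=\mathcal{N}(0,\Sigma_{\rho_k})$, the exponent of $\rho_k e^{-g}$ is
\begin{align*}
-\tfrac12 u^\top(\Sigma_{\rho_k}^{-1}+I+B_k^\top\Gamma_{k+1}B_k)u-u^\top B_k^\top\Gamma_{k+1}A_kx+(\text{terms in }x).
\end{align*}
This is where the assumption $\Sigma_{\rho_k}^{-1}+I+B_k^\top\Gamma_{k+1}B_k\succ0$ enters. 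It guarantees $Z<\infty$; otherwise the infimum is $-\infty$. It also shows that $\pi^\ast$ is exactly the Gaussian \eqref{eq:optimal policy of MIOCP with fixed prior}, with covariance \eqref{eq:covariance matrix of optimal policy of MIOCP with fixed prior} and mean \eqref{eq:mean of optimal policy of MIOCP with fixed prior}, obtained by completing the square.

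Next we compute $-\log Z$ to close the induction. Evaluating the Gaussian integral gives
\begin{align*}
-\log Z=\tfrac12 x^\top A_k^\top\Gamma_{k+1}A_kx-\tfrac12\|B_k^\top\Gamma_{k+1}A_kx\|^2_{\Sigma_{\hat\pi_k^\rho}}+\tfrac12\log\frac{|\Sigma_{\rho_k}|}{|\Sigma_{\hat\pi_k^\rho}|}.
\end{align*}
The quadratic part in $x$ is precisely $\frac12\|x\|^2_{\Gamma_k}$ with $\Gamma_k$ given by the Riccati recursion \eqref{eq:Riccati difference equation of MIOCP}. The remaining term is constant in $x$, so it is absorbed into $c_k$. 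This completes the induction.

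The value $V_0$ is finite because $x_0$ is Gaussian with finite second moments, so $\mathbb{E}[V_0(x_0)]$ is finite. Uniqueness of the optimal policy follows from the strict positivity of the KL gap at each stage. Let $\pi$ be any policy achieving the optimal value. Then the nonnegative stagewise KL gaps must vanish for almost every $x_k$ under the state distribution. Under the optimal policy that distribution has a density with full support, because $x_0$ has a positive-definite covariance. So the policy is determined almost everywhere.

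The main obstacle is technical rather than computational. We need the DP argument to be valid over general (non-Gaussian) conditional densities, that is, we must justify interchanging the minimization and the expectation. We must also handle possibly indefinite $\Gamma_{k+1}$, which is allowed because $F\in\mathbb{S}^n$ may be indefinite. We handle this through the Gibbs identity: it is an exact equality for every admissible $\pi_k$, so the stagewise lower bound holds without any convexity assumption on $\Gamma_{k+1}$. Positive definiteness is needed only of the combined matrix $\Sigma_{\rho_k}^{-1}+I+B_k^\top\Gamma_{k+1}B_k$, and that is exactly what the hypothesis supplies.
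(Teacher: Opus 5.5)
Your argument is correct and is essentially the standard dynamic-programming proof behind \cite[Theorem 1]{enami2025mutual}, which the paper cites rather than reproves: a quadratic value function, the Gibbs identity to solve each KL-regularized stage problem in closed form (giving the Gaussian $\hat{\pi}_k^{\rho}$), and the $-\log Z$ computation producing the Riccati recursion for $\Gamma_k$. One small fix is needed in the uniqueness step: the state law need not have full support when some $A_k$ is singular (e.g.\ $\Sigma_{x_{k+1}}=B_k\Sigma_{\hat{\pi}_k^{\rho}}B_k^{\top}$ if $A_k=0$), so instead argue by induction on $k$ that any optimal policy agrees with $\hat{\pi}^{\rho}$ almost everywhere with respect to the law of $x_k$, which the induction shows is the same under both policies and which is the only sense in which uniqueness can hold anyway.
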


Interestingly, Problems \ref{prob:MEOCP} and \ref{prob:MIOCP with fixed prior} have the following relationship.

\begin{lem}
    Consider a given prior $\rho \in \mathcal{R}_{0}$.
    Let $\bar{B}_{k}=B_{k}^{\rho}$.
    Suppose that the assumption of Proposition \ref{prop:optimal policy of MIOCP with fixed prior} is satisfied.
    Then, the probability distribution of the state sequence $\{x_{k}\}_{k=0}^{T}$ induced by \eqref{eq:linear system of MaxEnt}--\eqref{eq:initial condition of MaxEnt} and \eqref{eq:optimal policy of MEOCP} coincides with that induced by \eqref{eq:linear system of MIODCP}--\eqref{eq:initial condition of MIODCP} and \eqref{eq:optimal policy of MIOCP with fixed prior}.
  
    \label{lem:equivalence between MEOCP}
\end{lem}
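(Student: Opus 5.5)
Both closed-loop systems are linear with Gaussian noise and start from the same Gaussian $x_0\sim\mathcal{N}(0,\Sigma_{\mathrm{ini}})$. Each state sequence is therefore a Gauss--Markov process, and its law is fixed by the initial law together with the one-step transition kernels $x_{k+1}\mid x_k$. The plan is to show that these kernels coincide: for every $k$, the conditional mean matrices and the conditional covariances must agree.

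The first step is to show that the two Riccati recursions have the same solution. Let $\Gamma_k$ be the solution of \eqref{eq:Riccati difference equation of MIOCP}--\eqref{eq:terminal condition of Riccati difference equation of MIOCP}, and write $M_k:=(\Sigma_{\rho_k}^{-1}+I)^{-1/2}\succ 0$, so that $B_k^\rho=B_kM_k$. Then
\[
\Sigma_{\rho_k}^{-1}+I+B_k^\top\Gamma_{k+1}B_k = M_k^{-1}\bigl(I+(B_k^\rho)^\top\Gamma_{k+1}B_k^\rho\bigr)M_k^{-1}.
\]
Hence $I+(B_k^\rho)^\top\Gamma_{k+1}B_k^\rho\succ0$ whenever the assumption of Proposition \ref{prop:optimal policy of MIOCP with fixed prior} holds. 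Moreover,
\[
B_k(\Sigma_{\rho_k}^{-1}+I+B_k^\top\Gamma_{k+1}B_k)^{-1}B_k^\top = B_k^\rho\bigl(I+(B_k^\rho)^\top\Gamma_{k+1}B_k^\rho\bigr)^{-1}(B_k^\rho)^\top.
\]
It follows that \eqref{eq:Riccati difference equation of MIOCP} with $\Gamma_T=F$ is literally \eqref{eq:Riccati difference equation of MEOCP} with $\bar B_k=B_k^\rho$ and $\Pi_T=F$. By backward induction from $T$, we get $\Pi_k=\Gamma_k$ for all $k$, and the positivity assumption of Proposition \ref{prop:optimal policy of MEOCP} is satisfied. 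Both optimal policies are therefore well defined through Propositions \ref{prop:optimal policy of MEOCP} and \ref{prop:optimal policy of MIOCP with fixed prior}.

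The second step is to compute the transition kernels. Under \eqref{eq:optimal policy of MIOCP with fixed prior}, writing $S_k:=\Sigma_{\hat\pi_k^\rho}$, we have
\[
x_{k+1}\mid x_k\sim\mathcal{N}\bigl((A_k-B_kS_kB_k^\top\Gamma_{k+1}A_k)x_k,\;B_kS_kB_k^\top\bigr).
\]
Under \eqref{eq:optimal policy of MEOCP}, writing $\bar S_k:=\Sigma_{\hat\pi_k^{\mathrm{ME}}}$, we have
\[
x_{k+1}\mid x_k\sim\mathcal{N}\bigl((A_k-B_k^\rho\bar S_k(B_k^\rho)^\top\Pi_{k+1}A_k)x_k,\;B_k^\rho\bar S_k(B_k^\rho)^\top\bigr).
\]
The identity from the first step, together with $\Pi_{k+1}=\Gamma_{k+1}$, gives $B_kS_kB_k^\top=B_k^\rho\bar S_k(B_k^\rho)^\top$. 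So both the mean maps and the noise covariances agree. This holds even when $B_kS_kB_k^\top$ is singular, because the kernels are identical as (possibly degenerate) Gaussian distributions.

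The final step is an induction on $k$ using the Markov property: the joint law of $(x_0,\dots,x_k)$ is the same under both systems for every $k$, which gives the claim. The main obstacle is the matrix identity relating $\Sigma_{\rho_k}^{-1}+I+B_k^\top\Gamma B_k$ to $B_k^\rho$. It reduces to the congruence by $M_k$, using that $M_k$ is symmetric and invertible so that $M_k^{-1}M_k^{-1}=\Sigma_{\rho_k}^{-1}+I$. Everything else is bookkeeping.
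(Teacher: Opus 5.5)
Your proposal is correct and follows essentially the same route as the paper. The paper also identifies the two Riccati recursions through the identity $B_k^{\rho}(I+B_k^{\rho\top}\Pi_{k+1}B_k^{\rho})^{-1}B_k^{\rho\top}=B_k(\Sigma_{\rho_k}^{-1}+I+B_k^{\top}\Pi_{k+1}B_k)^{-1}B_k^{\top}$, and then matches the conditional means and covariances of the input terms step by step. Your congruence argument also explicitly checks that $I+B_k^{\rho\top}\Pi_{k+1}B_k^{\rho}\succ 0$, so the MaxEnt optimal policy is well defined; the paper leaves this step implicit.
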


\begin{proof}
    We start by showing that for given $x_{k}$, the conditional probability distribution of $B_{k}^{\rho}u_{k}$ with $u_{k} \sim \hat{\pi}_{k}^{\mathrm{ME}}(\cdot|x_{k})$ coincides with that of $B_{k}u_{k}$ with $u_{k}\sim \hat{\pi}_{k}^{\rho}(\cdot|x_{k})$.
    Because we set $\bar{B}_{k}=B_{k}^{\rho}$, the update law \eqref{eq:Riccati difference equation of MEOCP} of $\Pi_{k}$ can be rewritten as follows:
    \begin{align*}
        \Pi_{k} =  & A_{k}^{\top} \Pi_{k+1} A_{k} -A_{k}^{\top} \Pi_{k+1} B_{k}^{\rho}(I + B_{k}^{\rho\top}\Pi_{k+1}B_{k}^{\rho})^{-1}\nonumber \\
        &\times B_{k}^{\rho\top} \Pi_{k+1} A_{k}\\
        =& A_{k}^{\top} \Pi_{k+1} A_{k} -A_{k}^{\top} \Pi_{k+1} B_{k}  \\
        &\times ( \Sigma_{\rho_{k}}^{-1}+ I + B_{k}^{\top}\Pi_{k+1}B_{k})^{-1}B_{k}^{\top} \Pi_{k+1} A_{k}.
    \end{align*}
    This Riccati equation coincides with the Riccati equation \eqref{eq:Riccati difference equation of MIOCP} of $\Gamma_{k}$.
    Combining this with $\Pi_{T}=\Gamma_{T}=F$, it follows that $\Pi_{k}=\Gamma_{k}\ \forall k \in \llbracket0,T \rrbracket$.
    From \eqref{eq:covariance matrix of optimal policy of MEOCP} and \eqref{eq:mean of optimal policy of MEOCP}, the covariance matrix and mean of the conditional Gaussian distribution of $B_{k}^{\rho}u_{k}$ with $u_{k}\sim \hat{\pi}_{k}^{\mathrm{ME}}(\cdot|x_{k})$ are given by
    \begin{align*}
        B_{k}^{\rho}\Sigma_{\hat{\pi}_{k}^{\mathrm{ME}}}B_{k}^{\rho \top} =& B_{k}^{\rho}(I+B_{k}^{\rho \top}\Pi_{k+1}B_{k}^{\rho})^{-1}B_{k}^{\rho \top}\\
        =& B_{k}( \Sigma_{\rho_{k}}^{-1}+ I + B_{k}^{\top}\Pi_{k+1}B_{k})^{-1}B_{k}^{\top},\\
        B_{k}^{\rho}\mu_{\hat{\pi}_{k}^{\mathrm{ME}}}=&-B_{k}^{\rho}\Sigma_{\hat{\pi}_{k}^{\mathrm{ME}}}B_{k}^{\rho \top} \Pi_{k+1} A_{k}x_{k}.
    \end{align*}
    On the other hand, from \eqref{eq:covariance matrix of optimal policy of MIOCP with fixed prior} and \eqref{eq:mean of optimal policy of MIOCP with fixed prior}, the covariance matrix and mean of the conditional Gaussian distribution of $B_{k}u_{k}$ with $u_{k}\sim \hat{\pi}_{k}^{\rho}(\cdot|x_{k})$ are given by
    \begin{align*}
        B_{k}\Sigma_{\hat{\pi}_{k}^{\rho}}B_{k}^{\top} =& B_{k}(\Sigma_{\rho_{k}}^{-1} +I+B_{k}^{\top}\Gamma_{k+1}B_{k})^{-1}B_{k}^{\top},\\
        B_{k}\mu_{\hat{\pi}_{k}^{\rho}}=&- B_{k}\Sigma_{\hat{\pi}_{k}^{\rho}}B_{k}^{\top}\Gamma_{k+1} A_{k}x_{k}.
    \end{align*}
    Therefore, the conditional probability distribution of $B_{k}^{\rho}u_{k}$ with $u_{k} \sim \hat{\pi}_{k}^{\mathrm{ME}}(\cdot|x_{k})$ coincides with that of $B_{k}u_{k}$ with $u_{k}\sim \hat{\pi}_{k}^{\rho}(\cdot|x_{k})$.
    
    On the basis of this observation, the probability distribution of the state sequence $\{x_{k}\}_{k=0}^{T}$ induced by \eqref{eq:linear system of MaxEnt}--\eqref{eq:initial condition of MaxEnt} and \eqref{eq:optimal policy of MEOCP}, that is,
    \begin{align*}
        &x_{0} \sim \mathcal{N}(0,\Sigma_{\mathrm{ini}}),\\
        &x_{k+1}=A_{k}x_{k}+B_{k}^{\rho}u_{k}, u_{k}\sim \hat{\pi}_{k}^{\mathrm{ME}}(\cdot|x_{k}),
    \end{align*}
    coincides with that induced by \eqref{eq:linear system of MIODCP}--\eqref{eq:initial condition of MIODCP} and \eqref{eq:optimal policy of MIOCP with fixed prior}, that is,
    \begin{align*}
        &x_{0} \sim \mathcal{N}(0,\Sigma_{\mathrm{ini}}),\\
        &x_{k+1}=A_{k}x_{k}+B_{k}u_{k}, u_{k}\sim \hat{\pi}_{k}^{\rho}(\cdot|x_{k}),
    \end{align*}
    which completes the proof.
\end{proof}

Lemma \ref{lem:equivalence between MEOCP} implies that the probability distributions of the state sequences in Problem \ref{prob:MEOCP} with $\bar{B}_{k}=B_{k}^{\rho}$ and Problem \ref{prob:MIOCP with fixed prior} are identical under their respective optimal policies.
Lemma \ref{lem:equivalence between MEOCP} plays an important role in solving the following problem: how to adjust $F$ to steer $\Sigma_{x_{T}}$ to the desired covariance matrix $\Sigma_{\mathrm{fin}}$ under \eqref{eq:linear system of MIODCP}--\eqref{eq:initial condition of MIODCP} and \eqref{eq:optimal policy of MIOCP with fixed prior}.
Let us here denote by
\begin{align*}
    F_{\Sigma_{\mathrm{fin}}}^{\mathrm{MI}} \in \mathbb{S}^{n}
\end{align*}
a weight matrix $F$ that makes $\Sigma_{x_{T}}=\Sigma_{\mathrm{fin}}$ under \eqref{eq:linear system of MIODCP}--\eqref{eq:initial condition of MIODCP} and \eqref{eq:optimal policy of MIOCP with fixed prior} if it exists.
Then, this problem is solved by the following proposition.

\begin{prop}
    Consider a given prior $\rho \in \mathcal{R}_{0}$.
    Let $\bar{B}_{k}=B_{k}^{\rho}$.
    Suppose that the assumptions of Proposition \ref{prop:proper choice of F in MEOCP} hold.
    Then, $F_{\Sigma_{\mathrm{fin}}}^{\mathrm{MI}}$ is uniquely given by $F_{\Sigma_{\mathrm{fin}}}^{\mathrm{MI}} = F_{\Sigma_{\mathrm{fin}}}^{\mathrm{ME}} = Q_{T}^{-1}$.
    \label{prop:proper choice of F in MIOCP}
\end{prop}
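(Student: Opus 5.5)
The plan is to transfer Proposition \ref{prop:proper choice of F in MEOCP} to the MI setting through Lemma \ref{lem:equivalence between MEOCP}. The key observation is that, once $\bar{B}_{k}=B_{k}^{\rho}$, the two Riccati recursions \eqref{eq:Riccati difference equation of MEOCP} and \eqref{eq:Riccati difference equation of MIOCP} coincide for every terminal weight $F$. This was already shown inside the proof of Lemma \ref{lem:equivalence between MEOCP}, using the identity $B_{k}^{\rho}(I+B_{k}^{\rho\top}\Pi B_{k}^{\rho})^{-1}B_{k}^{\rho\top}=B_{k}(\Sigma_{\rho_{k}}^{-1}+I+B_{k}^{\top}\Pi B_{k})^{-1}B_{k}^{\top}$. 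Consequently the map $F\mapsto\Sigma_{x_{T}}$ induced by the MI optimal policy \eqref{eq:optimal policy of MIOCP with fixed prior} is the same as the map induced by the MaxEnt optimal policy \eqref{eq:optimal policy of MEOCP}, wherever both are defined.

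\emph{Existence.} Apply Proposition \ref{prop:proper choice of F in MEOCP} with $\bar{B}_{k}=B_{k}^{\rho}$ and $F=Q_{T}^{-1}$. It gives that $\Pi_{k}=Q_{k}^{-1}$ satisfies $I+B_{k}^{\rho\top}\Pi_{k+1}B_{k}^{\rho}\succ0$ and that $\Sigma_{x_{T}}=\Sigma_{\mathrm{fin}}$ under the MaxEnt optimal policy.

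Before invoking Lemma \ref{lem:equivalence between MEOCP}, I must check the assumption of Proposition \ref{prop:optimal policy of MIOCP with fixed prior}, namely $\Sigma_{\rho_{k}}^{-1}+I+B_{k}^{\top}\Gamma_{k+1}B_{k}\succ0$ with $\Gamma_{k}=\Pi_{k}$. Write $M=(\Sigma_{\rho_{k}}^{-1}+I)^{1/2}$, so that $B_{k}^{\rho}=B_{k}M^{-1}$. Then
\begin{align*}
\Sigma_{\rho_{k}}^{-1}+I+B_{k}^{\top}\Pi_{k+1}B_{k}=M\bigl(I+B_{k}^{\rho\top}\Pi_{k+1}B_{k}^{\rho}\bigr)M,
\end{align*}
and this is positive definite by congruence. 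Lemma \ref{lem:equivalence between MEOCP} then shows that the MI state process has the same law as the MaxEnt one, so $\Sigma_{x_{T}}=\Sigma_{\mathrm{fin}}$ under \eqref{eq:optimal policy of MIOCP with fixed prior}. Hence $F=Q_{T}^{-1}$ is a valid choice of $F_{\Sigma_{\mathrm{fin}}}^{\mathrm{MI}}$.

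\emph{Uniqueness.} This is the step needing the most care. Suppose some $F'$ achieves $\Sigma_{x_{T}}=\Sigma_{\mathrm{fin}}$ under the MI optimal policy, where the assumption of Proposition \ref{prop:optimal policy of MIOCP with fixed prior} holds for $F'$ (otherwise that policy is undefined). The same congruence identity, read in reverse, shows that $I+B_{k}^{\rho\top}\Gamma_{k+1}B_{k}^{\rho}\succ0$, so Proposition \ref{prop:optimal policy of MEOCP} applies to Problem \ref{prob:MEOCP} with $\bar{B}_{k}=B_{k}^{\rho}$ and weight $F'$. Lemma \ref{lem:equivalence between MEOCP} then gives $\Sigma_{x_{T}}=\Sigma_{\mathrm{fin}}$ under the MaxEnt optimal policy as well. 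So $F'$ is a valid $F_{\Sigma_{\mathrm{fin}}}^{\mathrm{ME}}$, and the uniqueness part of Proposition \ref{prop:proper choice of F in MEOCP} forces $F'=Q_{T}^{-1}$.

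The main point to watch is that well-posedness, i.e.\ the positivity conditions and the uniqueness of the Riccati solutions, transfers in both directions. The congruence identity above handles this, since the two recursions are literally the same equation.
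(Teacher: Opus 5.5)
Your proposal is correct and follows the same route as the paper: apply Proposition \ref{prop:proper choice of F in MEOCP} to the MaxEnt problem with $\bar{B}_{k}=B_{k}^{\rho}$, then transfer the result through Lemma \ref{lem:equivalence between MEOCP}. Your version is more careful than the paper's two-line proof in two places the paper leaves implicit: the congruence identity shows that the positivity hypotheses of Propositions \ref{prop:optimal policy of MEOCP} and \ref{prop:optimal policy of MIOCP with fixed prior} are equivalent, and you spell out the reverse direction needed for uniqueness.
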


\begin{proof}
    From Proposition \ref{prop:proper choice of F in MEOCP}, the weight matrix $F$ that makes $\Sigma_{x_{T}}=\Sigma_{\mathrm{fin}}$ under \eqref{eq:linear system of MaxEnt}--\eqref{eq:initial condition of MaxEnt} and \eqref{eq:optimal policy of MEOCP} is uniquely given by $F_{\Sigma_{\mathrm{fin}}}^{\mathrm{ME}} = Q^{-1}$.
    From Lemma \ref{lem:equivalence between MEOCP}, by choosing $F=F_{\Sigma_{\mathrm{fin}}}^{\mathrm{ME}}$, we have $\Sigma_{x_{T}}=\Sigma_{\mathrm{fin}}$ under \eqref{eq:linear system of MIODCP}--\eqref{eq:initial condition of MIODCP} and \eqref{eq:optimal policy of MIOCP with fixed prior}.
    Combining these two facts completes the proof.
\end{proof}

So far, we have analyzed the relationship between Problems \ref{prob:MEOCP} and \ref{prob:MIOCP with fixed prior}.
We now proceed to clarify the relationship between Problem \ref{prob:MIOCP with fixed prior} and our main focus, Problem \ref{prob:MIODCP} with fixed $\rho$.

\begin{lem}
    Consider a given prior $\rho \in \mathcal{R}_{0}$.
    Let $\bar{B}_{k}=B_{k}^{\rho}$.
    Suppose that the assumptions of Proposition \ref{prop:proper choice of F in MEOCP} hold.
    Then, the optimal policy $\pi^{\rho}$ of Problem \ref{prob:MIODCP} with fixed $\rho$ is uniquely given by the unique optimal policy $\hat{\pi}^{\rho}$ of Problem \ref{prob:MIOCP with fixed prior} with weight matrix $F_{\Sigma_{\mathrm{fin}}}^{\mathrm{MI}}$.
  
    \label{lem:relationship between MIOCP and MIODCP with fixed prior}
\end{lem}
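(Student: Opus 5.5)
The plan mirrors the proof of Lemma \ref{lem:relationship between MEOCP and MEODCP}, with Proposition \ref{prop:proper choice of F in MIOCP} in place of Proposition \ref{prop:proper choice of F in MEOCP}. Fix $\rho \in \mathcal{R}_{0}$ and set $F = F_{\Sigma_{\mathrm{fin}}}^{\mathrm{MI}} = Q_{T}^{-1}$, which exists and is unique by Proposition \ref{prop:proper choice of F in MIOCP}.

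First, I will check that the assumption of Proposition \ref{prop:optimal policy of MIOCP with fixed prior} holds for this $F$. By Proposition \ref{prop:proper choice of F in MEOCP} with $\bar{B}_{k}=B_{k}^{\rho}$, the solution $\Pi_{k}$ of \eqref{eq:Riccati difference equation of MEOCP} with $\Pi_{T}=Q_{T}^{-1}$ is well defined and satisfies $I+B_{k}^{\rho\top}\Pi_{k+1}B_{k}^{\rho}\succ 0$. As computed in the proof of Lemma \ref{lem:equivalence between MEOCP}, $\Pi_{k}$ then solves \eqref{eq:Riccati difference equation of MIOCP}, so $\Gamma_{k}=\Pi_{k}$. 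Moreover, $\Sigma_{\rho_{k}}^{-1}+I+B_{k}^{\top}\Gamma_{k+1}B_{k} = (\Sigma_{\rho_{k}}^{-1}+I)^{1/2}(I+B_{k}^{\rho\top}\Pi_{k+1}B_{k}^{\rho})(\Sigma_{\rho_{k}}^{-1}+I)^{1/2}\succ 0$, because it is a congruence of a positive definite matrix by an invertible one. Hence Proposition \ref{prop:optimal policy of MIOCP with fixed prior} applies, and $\hat{\pi}^{\rho}$ is the unique optimal policy of Problem \ref{prob:MIOCP with fixed prior}. By Lemma \ref{lem:equivalence between MEOCP} and Proposition \ref{prop:proper choice of F in MIOCP}, $\hat{\pi}^{\rho}$ yields $x_{T}\sim\mathcal{N}(0,\Sigma_{\mathrm{fin}})$. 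The state process is Gaussian (linear dynamics, Gaussian initial state, Gaussian policy with linear mean) and has zero mean, so $\hat{\pi}^{\rho}$ is feasible for Problem \ref{prob:MIODCP} with $\rho$ fixed.

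Next, I will compare costs. Take any policy $\pi$ feasible for Problem \ref{prob:MIODCP}, so that $x_{T}\sim\mathcal{N}(0,\Sigma_{\mathrm{fin}})$. Its cost in Problem \ref{prob:MIOCP with fixed prior} equals $J(\pi,\rho)+\frac{1}{2}\mathrm{Tr}[Q_{T}^{-1}\Sigma_{\mathrm{fin}}]$, and the added term is a constant independent of $\pi$. Thus on the feasible set of Problem \ref{prob:MIODCP} the two objectives differ by a constant. Since $\hat{\pi}^{\rho}$ minimizes the Problem \ref{prob:MIOCP with fixed prior} objective over all policies and is itself feasible, it minimizes $J(\cdot,\rho)$ over the feasible set.

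Finally, uniqueness follows by contradiction. If another feasible $\pi\neq\hat{\pi}^{\rho}$ attained the same value of $J(\cdot,\rho)$, it would also be optimal for Problem \ref{prob:MIOCP with fixed prior}, contradicting the uniqueness in Proposition \ref{prop:optimal policy of MIOCP with fixed prior}.

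The main obstacle is the first step: confirming that the positivity assumption and well-definedness of $\Gamma_{k}$ transfer from the MaxEnt setting. The congruence identity above handles this. Beyond that, the only care needed is that the terminal cost's expectation depends solely on the second moment of $x_{T}$, which holds because the mean is zero.
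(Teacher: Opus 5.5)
Your proposal is correct and follows the paper's route. The paper proves this lemma by reusing the argument of Lemma \ref{lem:relationship between MEOCP and MEODCP}: on the feasible set the terminal cost is the constant $\frac{1}{2}\mathrm{Tr}[F_{\Sigma_{\mathrm{fin}}}^{\mathrm{MI}}\Sigma_{\mathrm{fin}}]$, so feasibility and uniqueness of $\hat{\pi}^{\rho}$ in Problem \ref{prob:MIOCP with fixed prior} carry over to Problem \ref{prob:MIODCP} with fixed $\rho$. Your congruence identity deriving $\Sigma_{\rho_{k}}^{-1}+I+B_{k}^{\top}\Gamma_{k+1}B_{k}\succ 0$ from $I+B_{k}^{\rho\top}\Pi_{k+1}B_{k}^{\rho}\succ 0$ supplies a step the paper leaves implicit, since it applies Lemma \ref{lem:equivalence between MEOCP} and Proposition \ref{prop:optimal policy of MIOCP with fixed prior} while assuming only Proposition \ref{prop:proper choice of F in MEOCP}.
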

\begin{proof}
    Applying a similar argument as in the proof of Lemma \ref{lem:relationship between MEOCP and MEODCP} completes the proof.
\end{proof}

Just as Problem \ref{prob:MEODCP} reduces to finding $F$ such that $\Sigma_{x_{T}} = \Sigma_{\mathrm{fin}}$ in Problem \ref{prob:MEOCP}, Lemma \ref{lem:relationship between MIOCP and MIODCP with fixed prior} implies that Problem \ref{prob:MIODCP} with fixed $\rho$ reduces to finding $F$ such that $\Sigma_{x_{T}} = \Sigma_{\mathrm{fin}}$ in Problem \ref{prob:MIOCP with fixed prior}.
Combining Propositions \ref{prop:optimal policy of MIOCP with fixed prior} and \ref{prop:proper choice of F in MIOCP} and Lemma \ref{lem:relationship between MIOCP and MIODCP with fixed prior}, we finally obtain the optimal policy of Problem \ref{prob:MIODCP} with fixed $\rho$.

\begin{thm}
    Consider a given prior $\rho \in \mathcal{R}_{0}$.
    Let $\bar{B}_{k}=B_{k}^{\rho}$.
    Suppose that the assumptions of Proposition \ref{prop:proper choice of F in MEOCP} hold.
    Then, the unique optimal policy of Problem \ref{prob:MIODCP} with fixed $\rho$ is given by
    \begin{align}
        \pi_{k}^{\rho}(\cdot|x) = \mathcal{N}(\mu_{\pi_{k}^{\rho}}, \Sigma_{\pi_{k}^{\rho}}),  k \in \llbracket 0,T-1 \rrbracket,\label{eq:optimal policy of MIODCP for fixed prior}
    \end{align}
    where $Q_{k}$ is the solution to \eqref{eq:lyapunov difference equation} and \eqref{eq:initial condition of lyapunov difference equation},
    \begin{align}
        \Sigma_{\pi_{k}^{\rho}}:=&\left(\Sigma_{\rho_{k}}^{-1}+ I + B_{k}^{\top}Q_{k+1}^{-1}B_{k}\right)^{-1},\label{eq:covariance matrix of optimal policy of MIODCP}\\
        \mu_{\pi_{k}^{\rho}} :=&  -\Sigma_{\pi_{k}^{\rho}}B_{k}^{\top}Q_{k+1}^{-1}A_{k}x.\label{eq:mean of optimal policy of MIODCP}
    \end{align}
  
    \label{thm:optimal policy of MIODCP with prior fixed}
\end{thm}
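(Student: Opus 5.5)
The proof assembles the three preceding results. By Lemma \ref{lem:relationship between MIOCP and MIODCP with fixed prior}, the unique optimal policy of Problem \ref{prob:MIODCP} with fixed $\rho$ is the unique optimal policy $\hat{\pi}^{\rho}$ of Problem \ref{prob:MIOCP with fixed prior} with weight matrix $F=F_{\Sigma_{\mathrm{fin}}}^{\mathrm{MI}}$. By Proposition \ref{prop:proper choice of F in MIOCP}, this weight is $F_{\Sigma_{\mathrm{fin}}}^{\mathrm{MI}}=Q_{T}^{-1}$, where $Q_{k}$ solves \eqref{eq:lyapunov difference equation} and \eqref{eq:initial condition of lyapunov difference equation} with $\bar{B}_{k}=B_{k}^{\rho}$. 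It then remains to evaluate the closed form in Proposition \ref{prop:optimal policy of MIOCP with fixed prior} at $F=Q_{T}^{-1}$. This requires checking its hypothesis and identifying $\Gamma_{k+1}$.

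The key step is to show that $\Gamma_{k}=Q_{k}^{-1}$ for all $k$. Proposition \ref{prop:proper choice of F in MEOCP} gives two facts for the MaxEnt Riccati solution $\Pi_{k}$ with $\bar{B}_{k}=B_{k}^{\rho}$ and $\Pi_{T}=Q_{T}^{-1}$:
\begin{align*}
\Pi_{k}=Q_{k}^{-1}, \qquad I+B_{k}^{\rho\top}\Pi_{k+1}B_{k}^{\rho}\succ 0 .
\end{align*}
As in the proof of Lemma \ref{lem:equivalence between MEOCP}, substituting $B_{k}^{\rho}=B_{k}(\Sigma_{\rho_{k}}^{-1}+I)^{-1/2}$ rewrites the MaxEnt recursion \eqref{eq:Riccati difference equation of MEOCP} exactly as \eqref{eq:Riccati difference equation of MIOCP}. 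Since both recursions share the terminal value $Q_{T}^{-1}$, backward induction gives $\Gamma_{k}=\Pi_{k}=Q_{k}^{-1}$.

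For the positivity hypothesis, I will use the congruence
\begin{align*}
\Sigma_{\rho_{k}}^{-1}+I+B_{k}^{\top}\Gamma_{k+1}B_{k}=(\Sigma_{\rho_{k}}^{-1}+I)^{1/2}\bigl(I+B_{k}^{\rho\top}\Pi_{k+1}B_{k}^{\rho}\bigr)(\Sigma_{\rho_{k}}^{-1}+I)^{1/2}.
\end{align*}
The middle factor is positive definite and $(\Sigma_{\rho_{k}}^{-1}+I)^{1/2}$ is invertible, so the left-hand side is positive definite. Uniqueness of the Riccati solution is immediate, because the backward recursion is well defined once these inverses exist.

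Finally, substituting $\Gamma_{k+1}=Q_{k+1}^{-1}$ into \eqref{eq:covariance matrix of optimal policy of MIOCP with fixed prior} and \eqref{eq:mean of optimal policy of MIOCP with fixed prior} yields \eqref{eq:covariance matrix of optimal policy of MIODCP} and \eqref{eq:mean of optimal policy of MIODCP}. The only nontrivial point is the positivity and invertibility bookkeeping needed to apply Proposition \ref{prop:optimal policy of MIOCP with fixed prior}, and the congruence above settles it.
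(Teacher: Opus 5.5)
Your proposal is correct and follows the paper's route: the paper obtains the theorem by combining Lemma~\ref{lem:relationship between MIOCP and MIODCP with fixed prior}, Proposition~\ref{prop:proper choice of F in MIOCP} and Proposition~\ref{prop:optimal policy of MIOCP with fixed prior}, with $\Gamma_{k}=\Pi_{k}=Q_{k}^{-1}$ following from the Riccati identification in the proof of Lemma~\ref{lem:equivalence between MEOCP} together with Proposition~\ref{prop:proper choice of F in MEOCP}. Your congruence $\Sigma_{\rho_{k}}^{-1}+I+B_{k}^{\top}\Gamma_{k+1}B_{k}=(\Sigma_{\rho_{k}}^{-1}+I)^{1/2}(I+B_{k}^{\rho\top}\Pi_{k+1}B_{k}^{\rho})(\Sigma_{\rho_{k}}^{-1}+I)^{1/2}$, applied inside the same backward induction, makes explicit the positivity hypothesis of Proposition~\ref{prop:optimal policy of MIOCP with fixed prior}, which the paper uses without checking.
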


\subsection{Optimal Prior for Fixed Policy}
\label{subsec:Optimal Prior for Fixed Policy}

Compared to the derivation of $\pi^{\rho}$, that of the optimal prior of Problem \ref{prob:MIODCP} with fixed $\pi$ is straightforward.
Let us introduce the following policy class.
\begin{align*}
    \mathcal{P}_{0}:=\{&\pi = \{\pi_{k}\}_{k=0}^{T-1}\mid \pi_{k}(\cdot|x)=\mathcal{N}(P_{k}x,\Sigma_{\pi_{k}}),\\
    &P_{k}\in \mathbb{R}^{m\times n},\Sigma_{\pi_{k}}\succ 0, \mathrm{Im}(P_{k})\subset \mathrm{Im}(\Sigma_{\pi_{k}})\}.
\end{align*}
Noting that $\pi^{\rho} \in \mathcal{P}_{0}$ for any $\rho \in \mathcal{R}_{0}$ from Theorem \ref{thm:optimal policy of MIODCP with prior fixed}, we can assume $\pi \in \mathcal{P}_{0}$ without loss of generality in Problem \ref{prob:MIODCP}.
Under the system \eqref{eq:linear system of MIODCP}, \eqref{eq:stochastic input of MIODCP}, the initial constraint \eqref{eq:initial condition of MIODCP}, and a policy $\pi\in \mathcal{P}_{0}$, the covariance matrix $\Sigma_{x_{k}}$ of the state evolves as follows:
\begin{align}
    &\Sigma_{x_{k+1}} = (A_{k} + B_{k}P_{k})\Sigma_{x_{k}} (A_{k} + B_{k}P_{k})^{\top} + B_{k} \Sigma_{\pi_{k}} B_{k}^{\top},\nonumber\\
    &\hspace{35pt}k \in \llbracket 0, T-1 \rrbracket, \label{eq:evolution of covariance matrix of state}\\
    &\Sigma_{x_{0}} = \Sigma_{\mathrm{ini}}.\label{eq:initial covariance matrix of state}
\end{align}
Now, we obtain the optimal prior of Problem \ref{prob:MIODCP} with $\pi \in \mathcal{P}_{0}$ fixed.

\begin{thm}
    Consider a given policy $\pi\in \mathcal{P}_{0},\pi_{k}(\cdot|x)=\mathcal{N}(P_{k}x,\Sigma_{\pi_{k}})$ and the covariance matrices $\{\Sigma_{x_{k}}\}_{k=0}^{T}$ given by \eqref{eq:evolution of covariance matrix of state} and \eqref{eq:initial covariance matrix of state} under $\pi$.
    Then, the unique optimal prior $\rho^{\pi}$ of Problem \ref{prob:MIODCP} with fixed $\pi$ is given by
    \begin{align}
        &\rho_{k}^{\pi} = \mathcal{N}(0, \Sigma_{\pi_{k}} + P_{k} \Sigma_{x_{k}} P_{k}^{\top}),k \in \llbracket 0, T-1 \rrbracket. \label{eq:optimal prior of MIODCP for fixed policy}
    \end{align}
  
    \label{thm:optimal prior of MIODCP with policy fixed}
\end{thm}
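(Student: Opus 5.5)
With $\pi$ fixed, the control cost $\mathbb{E}[\frac{1}{2}\|u_k\|^2]$ and the state distribution do not depend on $\rho$. Indeed, $x_0$, the dynamics \eqref{eq:linear system of MIODCP} and the policy $\pi$ alone determine the law of $\{(x_k,u_k)\}$. Hence minimizing $J(\pi,\rho)$ over $\rho\in\mathcal{R}_0$ decouples into $T$ independent problems:
\begin{align*}
\min_{\Sigma_{\rho_k}\succ 0}\ \mathbb{E}_{x_k}\big[\mathcal{D}_{\mathrm{KL}}[\pi_k(\cdot|x_k)\|\rho_k]\big], \quad k\in\llbracket 0,T-1\rrbracket .
\end{align*}
The terminal constraint \eqref{eq:terminal condition of MIODCP} involves only $\pi$, so it plays no role here. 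Under $\pi\in\mathcal{P}_0$ and a zero-mean Gaussian $x_0$, each $x_k$ is zero-mean Gaussian with covariance $\Sigma_{x_k}$ given by \eqref{eq:evolution of covariance matrix of state}--\eqref{eq:initial covariance matrix of state}.

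Next I compute the Gaussian KL divergence in closed form:
\begin{align*}
\mathcal{D}_{\mathrm{KL}}[\mathcal{N}(P_kx,\Sigma_{\pi_k})\|\mathcal{N}(0,\Sigma_{\rho_k})]
=\tfrac12\big(\mathrm{Tr}(\Sigma_{\rho_k}^{-1}\Sigma_{\pi_k})+x^\top P_k^\top\Sigma_{\rho_k}^{-1}P_kx-m+\log|\Sigma_{\rho_k}|-\log|\Sigma_{\pi_k}|\big).
\end{align*}
Taking the expectation over $x_k\sim\mathcal{N}(0,\Sigma_{x_k})$ and setting $M_k:=\Sigma_{\pi_k}+P_k\Sigma_{x_k}P_k^\top\succ 0$ gives the objective
\begin{align*}
\tfrac12\big(\mathrm{Tr}(\Sigma_{\rho_k}^{-1}M_k)+\log|\Sigma_{\rho_k}|\big)+\text{const}.
\end{align*}
Equivalently, I can note that this expectation equals $\mathcal{D}_{\mathrm{KL}}[p_{u_k}\|\rho_k]+\mathcal{I}(x_k,u_k)$, where $p_{u_k}=\mathcal{N}(0,M_k)$ is the marginal law of $u_k$. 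This identity shows immediately that $\rho_k=p_{u_k}$ is the unique minimizer, since the KL divergence vanishes exactly when its arguments coincide. I would present this marginal-decomposition argument as the main proof, and keep the direct computation as a check.

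The remaining step is to verify that $\rho_k=\mathcal{N}(0,M_k)$ belongs to $\mathcal{R}_0$, that is, that $M_k\succ 0$. This holds because $\Sigma_{\pi_k}\succ 0$ and $P_k\Sigma_{x_k}P_k^\top\succeq 0$. For the decomposition itself, I need the joint density to be well defined, which is guaranteed by $\Sigma_{\pi_k}\succ 0$. The marginal of $u_k=P_kx_k+w_k$, with $w_k\sim\mathcal{N}(0,\Sigma_{\pi_k})$ independent of $x_k$, is then Gaussian with covariance $M_k$.

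If one prefers a pure matrix argument, uniqueness follows from strict convexity of $S\mapsto \mathrm{Tr}(SM_k)-\log|S|$ in $S=\Sigma_{\rho_k}^{-1}$. Its stationarity condition gives $S=M_k^{-1}$. I expect no real obstacle here. The only care point is justifying the exchange of expectation and KL divergence in the decomposition identity, which is immediate for Gaussians.
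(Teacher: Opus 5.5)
Your proposal is correct and follows the paper's route. Because $\rho$ does not affect the law of $\{(x_k,u_k)\}$, $\min_{\rho\in\mathcal{R}_0}J(\pi,\rho)$ splits into the per-step problems $\min_{\rho_k}\mathbb{E}[\mathcal{D}_{\mathrm{KL}}[\pi_k(\cdot|x_k)\|\rho_k]]$, whose unique minimizer is the marginal $\mathcal{N}(0,\Sigma_{\pi_k}+P_k\Sigma_{x_k}P_k^{\top})$ of $u_k$. The paper delegates that last step to \cite[Theorem 2]{enami2025mutual}, while you carry it out explicitly via $\mathbb{E}[\mathcal{D}_{\mathrm{KL}}[\pi_k(\cdot|x_k)\|\rho_k]]=\mathcal{I}(x_k,u_k)+\mathcal{D}_{\mathrm{KL}}[p_{u_k}\|\rho_k]$; this identity needs only the conditional and marginal densities of $u_k$, not a joint density of $(x_k,u_k)$, which may fail to exist since $\Sigma_{x_k}$ can be singular.
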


\begin{proof}
    Since $\pi$ is fixed and $\rho$ does not affect the evolution of $x_{k}$, we have
    \begin{align}
        &\min_{\rho\in \mathcal{R}_{0}} J(\pi,\rho)\ \text{s.t. }\eqref{eq:linear system of MIODCP}\text{--}\eqref{eq:terminal condition of MIODCP}\nonumber\\
        \Leftrightarrow & \min_{\rho_{k}} \mathbb{E}\left[\mathcal{D}_{\mathrm{KL}}[\pi_{k}(\cdot|x_{k}) \| \rho_{k}(\cdot)]\right], k \in \llbracket 0, T-1 \rrbracket. \label{eq:MIODCP with fixed policy}
    \end{align}
    Applying the same argument as in the proof of \cite[Theorem 2]{enami2025mutual} completes the proof.
\end{proof}

From Theorem \ref{thm:optimal prior of MIODCP with policy fixed}, it trivially follows that $\rho^{\pi} \in \mathcal{R}_{0}$ for any $\pi\in \mathcal{P}_{0}$.

\subsection{Alternating Optimization Algorithm for MI Optimal Density Control}

On the basis of Sections \ref{subsec:Optimal Policy for Fixed Prior} and \ref{subsec:Optimal Prior for Fixed Policy}, we propose an alternating optimization algorithm for Problem \ref{prob:MIODCP} as follows:

\begin{alg}
    Initialize the prior $\rho^{(0)} \in \mathcal{R}_{0}$, and iterate the following steps alternately.
    \begin{description}
        \item[P-step:]\hspace{2pt}Calculate the policy $\pi^{(i)} := \pi^{\rho^{(i)}}$ by Theorem \ref{thm:optimal policy of MIODCP with prior fixed}. 
        \item[R-step:]\hspace{2pt}Calculate the prior $\rho^{(i+1)} := \rho^{\pi^{(i)}}$ by Theorem \ref{thm:optimal prior of MIODCP with policy fixed}.
    \end{description}
  
    \label{alg:alternating optimization algorithm for MIODCP}
\end{alg}

Because $\pi^{\rho} \in \mathcal{P}_{0}$ and $\rho^{\pi} \in \mathcal{R}_{0}$ for $\rho \in \mathcal{R}_{0}$ and $\pi \in \mathcal{P}_{0}$, we can calculate $\pi^{(i)} \in \mathcal{P}_{0}$ in P-step and $\rho^{(i+1)} \in \mathcal{R}_{0}$ in R-step for all $i \in \mathbb{Z}_{\geq 0}$ by Theorems \ref{thm:optimal policy of MIODCP with prior fixed} and \ref{thm:optimal prior of MIODCP with policy fixed}, respectively.
Note that Algorithm \ref{alg:alternating optimization algorithm for MIODCP} supposes that the assumptions of Theorem \ref{thm:optimal policy of MIODCP with prior fixed} hold for any $\rho^{(i)},i\in \mathbb{Z}_{\geq 0}$ to calculate $\pi^{(i)}$ in P-step.

\subsection{Nonzero Mean Marginal Constraints and Prior Class}

In this subsection, we extend Algorithm \ref{alg:alternating optimization algorithm for MIODCP} to the case with nonzero mean marginal constraints as mentioned in Remark \ref{rem:validity of nonzero mean prior class}.
Recalling the nonzero mean prior class $\mathcal{R}$ defined as \eqref{eq:nonzero mean prior class}, we extend Problem \ref{prob:MIODCP} as follows:

\begin{prob}
    Find a pair of a policy $\pi = \{\pi_{k}\}_{k=0}^{T-1}$ and a prior $\rho = \{\rho_{k}\}_{k=0}^{T-1}$ that solves
    \begin{align}
        &\min_{\pi, \rho \in \mathcal{R}} \mathbb{E}\left[ \sum_{k=0}^{T-1} \frac{1}{2}\|u_{k}\|^{2} + \mathcal{D}_{\mathrm{KL}}[\pi_{k}(\cdot|x_{k}) \| \rho_{k}]  \right] \label{eq:objective function of MIODCP in general case}\\
        &\mbox{s.t. }\eqref{eq:linear system of MIODCP}, \eqref{eq:stochastic input of MIODCP},\nonumber\\
        &\hspace{19pt}x_{0} \sim \mathcal{N}(\mu_{\mathrm{ini}}, \Sigma_{\mathrm{ini}}), \label{eq:initial condition of MIODCP in general case}\\
        &\hspace{19pt}x_{T} \sim \mathcal{N}(\mu_{\mathrm{fin}}, \Sigma_{\mathrm{fin}}), \label{eq:terminal condition of MIODCP in general case}
    \end{align}
    where $\mu_{\mathrm{ini}},\mu_{\mathrm{fin}} \in \mathbb{R}^{n}$.
  
    \label{prob:MIODCP in general case}
\end{prob}

Here, let us enumerate the extensions of Problem \ref{prob:MIODCP} to Problem \ref{prob:MIODCP in general case}.
Unlike Problem \ref{prob:MIODCP} where the means of the initial and terminal state distributions are zero, the initial and terminal distributions of Problem \ref{prob:MIODCP in general case} have nonzero means $\mu_{\mathrm{ini}}$ and $\mu_{\mathrm{fin}}$, respectively.
Furthermore, the prior class $\mathcal{R}_{0}$ of Problem \ref{prob:MIODCP} is extended to $\mathcal{R}$ that involves nonzero mean $\mu_{\rho_{k}}$ in Problem \ref{prob:MIODCP in general case}.

To extend Algorithm \ref{alg:alternating optimization algorithm for MIODCP} to Problem \ref{prob:MIODCP in general case}, we will decompose Problem \ref{prob:MIODCP in general case} into a mean control problem described as an LQR problem and a covariance steering problem described as an MI optimal density control problem.
By decomposing the input as $u_{k}=\bar{u}_{k}+\check{u}_{k}$, where $\bar{u}_{k}:=\mathbb{E}[u_{k}]$, from \eqref{eq:linear system of MIODCP}, the mean $\mu_{x_{k}}:=\mathbb{E}[x_{k}]$ and the deviation $\check{x}_{k}:=x_{k}-\mu_{x_{k}}$ of the state $x_{k}$ evolve as follows:
\begin{align*}
    \mu_{x_{k+1}} =& A_{k}\mu_{x_{k}} + B_{k}\bar{u}_{k},k \in \llbracket 0,T-1 \rrbracket,\\
    \check{x}_{k+1}=&A_{k}\check{x}_{k}+B_{k}\check{u}_{k}, k \in \llbracket 0,T-1 \rrbracket
\end{align*}
with $\mu_{x_{0}}=\mu_{\mathrm{ini}}, \mu_{x_{T}}=\mu_{\mathrm{fin}}, \check{x}_{0}\sim \mathcal{N}(0,\Sigma_{\mathrm{ini}})$, and $\check{x}_{T}\sim \mathcal{N}(0,\Sigma_{\mathrm{fin}})$.
Note that $\{\mu_{x_{k}}\}_{k\in \llbracket0,T\rrbracket}$ is a deterministic process whereas $\{\check{x}_{k}\}_{k\in \llbracket0,T\rrbracket}$ is a stochastic process.
Let us define the conditional probability distribution of $\check{u}_{k}$ as $\check{\pi}_{k}(\cdot |\check{x})$, which satisfies that $\check{\pi}_{k}(\check{u}| \check{x})=\pi_{k}(\bar{u}_{k}+\check{u}|\mu_{x_{k}}+\check{x})$ for any $\check{u} \in \mathbb{R}^{m}$.
Similarly, let us define the corresponding prior as $\check{\rho}_{k}$, which satisfies that $\check{\rho}_{k}(\check{u})=\rho_{k}(\bar{u}_{k}+\check{u})$ for any $\check{u} \in \mathbb{R}^{m}$.
Then, we have
\begin{align*}
    &\mathbb{E}[\|u_{k}\|^{2}] = \|\bar{u}_{k}\|^{2} + \mathbb{E}[\|\check{u}_{k}\|^{2}],\\
    &\mathcal{D}_{\mathrm{KL}}[\pi_{k}(\cdot| x_{k})\|\rho_{k}] = \mathcal{D}_{\mathrm{KL}}[\check{\pi}_{k}(\cdot |\check{x}_{k})\|\check{\rho}_{k}].
\end{align*}
In addition, $\check{\rho} \in \mathcal{R}$ if $\rho \in \mathcal{R}$.
In summary, Problem \ref{prob:MIODCP in general case} can be decomposed into a deterministic LQR problem of the mean $\mu_{x_{k}}$ and an MI optimal density control problem of the deviation $\check{x}_{k}$ as follows:

\begin{prob}
    Find a deterministic input process $\bar{u}=\{\bar{u}_{k}\}_{k=0}^{T-1}$ and a pair of a policy $\check{\pi} = \{\check{\pi}_{k}\}_{k=0}^{T-1}$ and a prior $\check{\rho} = \{\check{\rho}_{k}\}_{k=0}^{T-1}$ that solve
    \begin{align}
        &\min_{\bar{u},\check{\pi}, \check{\rho} \in \mathcal{R}} \sum_{k=0}^{T-1}\frac{1}{2}\|\bar{u}_{k}\|^{2}\nonumber   \\
        &\hspace{20pt}+\mathbb{E}\left[ \sum_{k=0}^{T-1} \left\{\frac{1}{2}\|\check{u}_{k}\|^{2} +  \mathcal{D}_{\mathrm{KL}}[\check{\pi}_{k}(\cdot|\check{x}_{k}) \| \check{\rho}_{k}] \right\} \right] \nonumber \\
        &\mbox{s.t. }\mu_{x_{k+1}} = A_{k} \mu_{x_{k}} + B_{k} \bar{u}_{k}, \label{eq:evolution of mean of state of mean steering LQR problem} \\
        &\hspace{16pt}\mu_{x_{0}} = \mu_{\mathrm{ini}}, \mu_{x_{T}} = \mu_{\mathrm{fin}}, \label{eq:boundary conditions of mean steering LQR problem}\\
        &\hspace{16pt}\check{x}_{k+1} = A_{k} \check{x}_{k} + B_{k} \check{u}_{k}, \nonumber\\
        &\hspace{16pt}\check{u}_{k} \sim \check{\pi}_{k}(\cdot|\check{x}) \ \text{given }\check{x}=\check{x}_{k}, \nonumber \\
        &\hspace{16pt}\check{x}_{0} \sim \mathcal{N}(0, \Sigma_{\mathrm{ini}}), \check{x}_{T} \sim \mathcal{N}(0, \Sigma_{\mathrm{fin}}).\nonumber
    \end{align}
   
    \label{prob:decomposed MIODCP}
\end{prob}

We here investigate the LQR problem of $\mu_{x_{k}}$ and the MI optimal density control problem of $\check{x}_{k}$ separately.
We first address the LQR problem of $\mu_{x_{k}}$.
By applying \cite{lewis2012optimal}, the unique optimal deterministic process $\bar{u}^{*}$ is given by
\begin{align}
    \bar{u}_{k}^{*}=&B_{k}^{\top}\Phi(T,k+1)^{\top}G_{r}(T,0)^{-1} \left\{\mu_{\mathrm{fin}}-\Phi(T,0)\mu_{\mathrm{ini}} \right\}, \label{eq:optimal deterministic input}
\end{align}
where the reachability Gramian $G_{r}(T,0)$ in \eqref{eq:optimal deterministic input} is calculated with $\bar{B}_{k}=B_{k}$ and is assumed to be invertible.
Note that the invertibility of this Gramian $G_{r}(T,0)$ is ensured if the assumptions of Theorem \ref{thm:optimal policy of MIODCP with prior fixed} hold.
Denote the solution to \eqref{eq:evolution of mean of state of mean steering LQR problem} and \eqref{eq:boundary conditions of mean steering LQR problem} under \eqref{eq:optimal deterministic input} by $\mu_{x_{k}}^{*},k\in \llbracket 0,T \rrbracket$.

Next, we consider the MI optimal density control problem of $\check{x}_{k}$.
The difference between Problem \ref{prob:MIODCP} and the MI optimal density control problem included in Problem \ref{prob:decomposed MIODCP} is only whether the prior class is given by $\mathcal{R}_{0}$ or $\mathcal{R}$.
This observation motivates us to prove that Problem \ref{prob:MIODCP} with nonzero mean prior class $\mathcal{R}$ (referred to as Problem \ref{prob:MIODCP}$^{\prime}$) can be reduced to Problem \ref{prob:MIODCP} without loss of generality.
This claim is ensured by the following Proposition.
For the proof, see Appendix \ref{app:Proof of Proposition of zero mean optimal prior}.

\begin{prop}
    Assume that $A_{k}$ is invertible for any $k \in \llbracket 0,T-1 \rrbracket$.
    In addition, assume that there exists an optimal prior $\rho^{*}, \rho_{k}^{*}=\mathcal{N}(\mu_{\rho_{k}^{*}},\Sigma_{\rho_{k}^{*}})$ of Problem \ref{prob:MIODCP}$^{\prime}$.
    Then, any optimal prior $\rho^{*}$ of Problem \ref{prob:MIODCP}$^{\prime}$ satisfies $\mu_{\rho_{k}^{*}}=0,k\in \llbracket 0,T-1 \rrbracket$.
    \label{prop:optimal mean of prior is zero}
  
\end{prop}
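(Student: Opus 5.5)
Suppose $(\pi^{*},\rho^{*})$ is optimal for Problem \ref{prob:MIODCP}$^{\prime}$ with $\mu_{\rho_{k}^{*}}\neq 0$ for some $k$. The idea is to build a competitor with strictly smaller cost by shifting the prior means to zero, and to use $x_{0}$ and $x_{T}$ having zero mean to pay for this shift in the input energy.

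\textbf{Step 1: reduce the policy.} Fix $\rho^{*}$ and treat the mean of the policy as a decision variable. For any feasible $\pi$, write $\bar{u}_{k}:=\mathbb{E}[u_{k}]$ and decompose as in the derivation of Problem \ref{prob:decomposed MIODCP}. Since $\mu_{x_{0}}=\mu_{x_{T}}=0$, the mean trajectory satisfies $\mu_{x_{k+1}}=A_{k}\mu_{x_{k}}+B_{k}\bar{u}_{k}$ with both endpoints zero. The cost then splits into two parts. The first is a deterministic part depending on $(\bar{u}_{k},\mu_{\rho_{k}})$. The second is a deviation part depending only on $(\check{\pi},\check{\rho})$, and it is invariant under a common translation of the policy and prior means. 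Concretely, $\mathbb{E}\|u_{k}\|^{2}=\|\bar{u}_{k}\|^{2}+\mathbb{E}\|\check{u}_{k}\|^{2}$. For the KL term, the Gaussian closed form shows that, for a Gaussian prior, $\mathbb{E}\,\mathcal{D}_{\mathrm{KL}}[\pi_{k}(\cdot|x_{k})\|\rho_{k}]$ equals
\[
\mathbb{E}\,\mathcal{D}_{\mathrm{KL}}[\check{\pi}_{k}(\cdot|\check{x}_{k})\|\mathcal{N}(0,\Sigma_{\rho_{k}})]+\tfrac{1}{2}\|\bar{u}_{k}-\mu_{\rho_{k}}\|_{\Sigma_{\rho_{k}}^{-1}}^{2},
\]
where $\check{\pi}_{k}$ is the policy with its mean $\bar{u}_{k}$ removed. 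One way to see this is that the cross term $\mathbb{E}[(\mathbb{E}[u_{k}|x_{k}]-\bar{u}_{k})^{\top}\Sigma_{\rho_{k}}^{-1}(\bar{u}_{k}-\mu_{\rho_{k}})]$ vanishes. The deterministic part is therefore
\[
\sum_{k}\left\{\tfrac{1}{2}\|\bar{u}_{k}\|^{2}+\tfrac{1}{2}\|\bar{u}_{k}-\mu_{\rho_{k}}\|_{\Sigma_{\rho_{k}}^{-1}}^{2}\right\}.
\]

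\textbf{Step 2: construct the competitor.} Keep $\check{\pi}^{*}$ and the covariances $\Sigma_{\rho_{k}^{*}}$, and set $\bar{u}_{k}=0$ and $\mu_{\rho_{k}}=0$ for all $k$. That is, take $\pi_{k}(\cdot|x):=\check{\pi}^{*}_{k}(\cdot|x)$, applied around $\mu_{x_{k}}\equiv 0$, and $\rho_{k}=\mathcal{N}(0,\Sigma_{\rho_{k}^{*}})\in\mathcal{R}_{0}$. Feasibility holds because the deviation process $\check{x}$ is unchanged, so $\check{x}_{T}\sim\mathcal{N}(0,\Sigma_{\mathrm{fin}})$, and the mean trajectory is identically zero, so $x_{T}\sim\mathcal{N}(0,\Sigma_{\mathrm{fin}})$. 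The deviation part of the cost is unchanged and the deterministic part becomes $0$. Since every term in the deterministic part is nonnegative, optimality of $(\pi^{*},\rho^{*})$ forces that part to vanish at the optimum. This gives $\bar{u}_{k}^{*}=0$ and then $\mu_{\rho_{k}^{*}}=\bar{u}_{k}^{*}=0$ for all $k$, using $\Sigma_{\rho_{k}}^{-1}\succ 0$.

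\textbf{Main obstacle.} The delicate point is making the decomposition in Step 1 rigorous for a general, non-Gaussian policy $\pi$. The KL identity must hold with the prior mean entering only through $\tfrac{1}{2}\|\bar{u}_{k}-\mu_{\rho_{k}}\|^{2}_{\Sigma_{\rho_{k}}^{-1}}$; this follows by expanding $\log\rho_{k}$, which is quadratic, inside the expectation. One must also handle policies with infinite cost. The invertibility of $A_{k}$ in the statement is presumably used to ensure that zero mean at both ends is compatible with the state mean being zero throughout. Under the direct argument above, zero is always a feasible mean trajectory, so I expect that assumption to be needed only for well-definedness rather than in the core comparison.
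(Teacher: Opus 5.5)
Your argument is correct, but it takes a different route from the paper. The paper (Appendix~\ref{app:Proof of Proposition of zero mean optimal prior}) first invokes the closed-form optimal policy of the soft-terminal problem with a nonzero-mean prior (Lemma~\ref{lem:optimal policy of MIOCP with general prior fixed}). The offset recursion for $r_k$ in that lemma uses $A_k^{-1}$ and $\Gamma_k^{-1}$, and this is the only reason $A_k$ is assumed invertible. The paper then reduces Problem~\ref{prob:MIODCP}$^{\prime}$ to an optimization over $(\mu_\rho,\Sigma_\rho)$ with the policy fixed to $\hat{\pi}^{\rho}_k(\cdot|x)=\mathcal{N}(P^{\rho}_k x+q^{\rho}_k,\Sigma_{\hat{\pi}^{\rho}_k})$. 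It notes that $P^\rho_k$ and $\Sigma_{x_k}$ do not depend on $\mu_\rho$, and isolates the $\mu_\rho$-dependent cost $\frac{1}{2}\|P^{\rho}_k\mu_{x_k}+q^{\rho}_k\|^2+\frac{1}{2}\|P^{\rho}_k\mu_{x_k}+q^{\rho}_k-\mu_{\rho_k}\|^2_{\Sigma_{\rho_k}^{-1}}$. This cost vanishes at $\mu_\rho=0$ (where $r_k=q^\rho_k=\mu_{x_k}=0$) and is positive otherwise.

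That expression is exactly your deterministic part with $\bar{u}_k=P^{\rho}_k\mu_{x_k}+q^{\rho}_k$, so the final positivity step is shared; the difference is how you reach it. Your mean/deviation split is valid for arbitrary, possibly non-Gaussian, policies. Your translated competitor $(\check{\pi}^{*},\{\mathcal{N}(0,\Sigma_{\rho^{*}_k})\})$ is compared directly inside the hard-constrained problem. So you need neither the Riccati structure nor a terminal weight $F$. Nor do you need the paper's tacit step that the optimal policy for a fixed nonzero-mean prior has the form $\hat{\pi}^{\rho}$ for some $F$ meeting both terminal moment constraints.

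In particular, you can drop the invertibility of $A_k$ altogether, in line with Remark~\ref{rem:reason for invertibility assumption of A_k}. Your guess that it is needed to make the zero endpoint means compatible is not its actual role. The paper's route, in turn, reuses machinery built elsewhere and shows explicitly how nonzero prior means propagate into the closed-loop mean. The one caveat in your version, which you already flag, is that the optimal value must be finite so the deviation part can be cancelled from both sides; the paper tacitly assumes this as well.
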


\begin{rem}
    The assumption of the invertibility of $A_{k}$ in Proposition \ref{prop:optimal mean of prior is zero} stems from the assumption of Lemma \ref{lem:optimal policy of MIOCP with general prior fixed} introduced in Appendix \ref{app:Proof of Proposition of zero mean optimal prior}.
    In fact, this assumption can be removed by proving Lemma \ref{lem:optimal policy of MIOCP with general prior fixed} in another way \cite[Remark 3]{enami2025policy}.
    However, we opt to retain the invertibility assumption because proving Lemma \ref{lem:optimal policy of MIOCP with general prior fixed} in the other way makes the discussion in Appendix \ref{app:Proof of Proposition of zero mean optimal prior} complicated.
  
    \label{rem:reason for invertibility assumption of A_k}
\end{rem}

From Proposition \ref{prop:optimal mean of prior is zero}, the MI optimal density control problem included in Problem \ref{prob:decomposed MIODCP} can be reduced to Problem \ref{prob:MIODCP} without loss of generality.
Therefore, we can apply Algorithm \ref{alg:alternating optimization algorithm for MIODCP} to this deviation control problem.
From the perspective of Problem \ref{prob:MIODCP in general case}, Proposition \ref{prop:optimal mean of prior is zero} implies that we can fix the means of $\rho \in \mathcal{R}$ as $\mu_{\rho_{k}}=\bar{u}_{k}^{*}\ \forall k \in \llbracket 0,T-1 \rrbracket$.

Using these results, we propose the following algorithm for Problem \ref{prob:MIODCP in general case}, which consists of two parts: the optimal control input $\bar{u}^{*}$ for the LQR problem of the mean $\mu_{x_{k}}$ and Algorithm \ref{alg:alternating optimization algorithm for MIODCP} applied to the MI optimal density control problem of the deviation $\check{x}_{k}$.

\begin{alg}
    Calculate $\bar{u}^{*}$ and $\{\mu_{x_{k}}^{*}\}_{k=0}^{T}$.
    Initialize the prior $\check{\rho}^{(0)} \in \mathcal{R}_{0}$ and set $\rho^{(0)} \in \mathcal{R}, \rho^{(0)}_{k}(u)=\check{\rho}_{k}^{(0)}(u-\bar{u}_{k}^{*})$.
    Iterate the following steps alternately.
    \begin{description}
        \item[P-step:]\hspace{1pt}Calculate the policy $\check{\pi}^{(i)} := \pi^{\check{\rho}^{(i)}}$ by Theorem \ref{thm:optimal policy of MIODCP with prior fixed} and set $\pi^{(i)}, \pi_{k}^{(i)}(u|x)=\check{\pi}_{k}^{(i)}(u-\bar{u}_{k}^{*}|x-\mu_{x_{k}}^{*})$. 
        \item[R-step:]\hspace{2pt}Calculate the prior $\check{\rho}^{(i+1)} := \rho^{\check{\pi}^{(i)}}$ by Theorem \ref{thm:optimal prior of MIODCP with policy fixed} and set $\rho^{(i+1)}, \rho_{k}^{(i+1)}(u)=\check{\rho}_{k}^{(i+1)}(u-\bar{u}_{k}^{*})$.     
    \end{description}
  
    \label{alg:alternating optimization algorithm for MIODCP in general case}
\end{alg}

\section{Formulation of Generalized Schr\"{o}dinger Bridges with Reference Refinement}
\label{sec:Formulation of Generalized Schrodinger Bridges}

In Sections \ref{sec:Formulation of Generalized Schrodinger Bridges} and \ref{sec:Alternating Optimization for Schrodinger Bridges}, we will explore the connection between MI optmal density control and the Schr\"{o}dinger bridge.
After we formulate a Schr\"{o}dinger bridge problem in Section \ref{sec:Formulation of Generalized Schrodinger Bridges}, Section \ref{sec:Alternating Optimization for Schrodinger Bridges} will propose an alternating optimization algorithm for the Schr\"{o}dinger bridge problem and reveal the connection between these two problems.

The Schr\"{o}dinger bridge problem over a finite time horizon $T$ aims to identify a stochastic process (referred to as \textit{controlled process}) that matches prescribed marginal distributions at times $k=0$ and $k=T$, while remaining as close as possible to a given stochastic process (referred to as \textit{reference process}) in terms of KL divergence.
The reference process is commonly treated as an underlying dynamics assumed to be known as prior knowledge.
In recent years, various versions of Schr\"{o}dinger bridges have been proposed.
One example is the generalized Schr\"{o}dinger bridge, which is commonly introduced in the continuous-time setting.
This framework adds regularization terms into the objective function to incorporates desired specifications of the expected state of the controlled process \cite{chen2023GSB, liu2023GSB, theodoropoulos2026GSB}.
Another example is the Schr\"{o}dinger bridge with reference refinement, which optimizes both the controlled process and the reference process alternatively.
\cite{morimoto2025linear} proposed to use this framework for system identification from snapshot data.

In this section, inspired by \cite{morimoto2025linear}, we formulate a generalized Schr\"{o}dinger bridge problem with reference refinement to estimate time-varying unknown noise covariance matrices from snapshot data.
Consider the following linear system:
\begin{align}
    x_{k+1}=A_{k}x_{k}+B_{k}w_{k}^{\rho}, w_{k}^{\rho}\sim \rho_{k}, x_{0}\sim \mathcal{N}(\mu_{\mathrm{ref}},\Sigma_{\mathrm{ref}}), \label{eq:reference process}
\end{align}
where $\mu_{\mathrm{ref}} \in \mathbb{R}^{n},\Sigma_{\mathrm{ref}}\succ 0, \rho \in \mathcal{R}_{0}, \rho_{k}=\mathcal{N}(0,\Sigma_{\rho_{k}})$.
While the probability distribution $\rho_{k}$ was introduced as a prior in MI optimal density control, in this section, we treat it as the distribution of the noise $w_{k}^\rho$ in the system \eqref{eq:reference process} and assume that its covariance matrix $\Sigma_{\rho_k}$ is unknown.
In addition, assume that the snapshot data at time $t=0$ and $t=T$ are given by $\mathcal{N}(0,\Sigma_{\mathrm{ini}})$ and $\mathcal{N}(0,\Sigma_{\mathrm{fin}})$, respectively.

To estimate $\Sigma_{\rho_{k}}$ from the snapshot data, we formulate a Schr\"{o}dinger bridge problem with reference refinement.
To this end, let us introduce some notations.
The state process $\{x_{k}\}_{k=0}^{T}$ given by \eqref{eq:reference process} is called the reference process.
Denote by $\mathbb{Q}^{\rho}$ the probability distribution on $\chi:=(\mathbb{R}^{n})^{T+1}$ of the reference process $\{x_{k}\}_{k=0}^{T}$ given by \eqref{eq:reference process}.
Define as
\begin{align*}
    \Pi(\mathcal{R}_{0}):=\{\mathbb{Q}^{\rho} \mid \rho \in \mathcal{R}_{0}\}
\end{align*}
the set of all reference process distributions generated by $\rho \in \mathcal{R}_{0}$.
Denote by
\begin{align*}
    \Pi(\Sigma_{\mathrm{ini}},\Sigma_{\mathrm{fin}})
\end{align*}
the set of all probability distributions on $\chi$ whose marginals at $k=0$ and $k=T$ are equal to $\mathcal{N}(0,\Sigma_{\mathrm{ini}})$ and $\mathcal{N}(0,\Sigma_{\mathrm{fin}})$, respectively.
In addition, for a probability distribution $\mathbb{P}$ on $\chi$, denote the distribution of $x_{k}$ and the conditional distribution of $x_{k+1}$ given $x_{k}$ by $\mathbb{P}_{k}$ and $\mathbb{P}_{k+1|k}$, respectively.
Furthermore, we denote by $\mathbb{E}_{\mathbb{P}}[\ \cdot\ ]$ the expected value with respect to a probability distribution $\mathbb{P}$.
Using these notations, we formulate the following problem.
\begin{prob}
     Find a pair of a controlled process distribution $\mathbb{P} \in \Pi(\Sigma_{\mathrm{ini}},\Sigma_{\mathrm{fin}})$ and a reference process distribution $\mathbb{Q}^{\rho} \in \Pi(\mathcal{R}_{0})$ that solves
     \begin{align}
         \min_{\mathbb{P},\mathbb{Q}^{\rho}} &\mathcal{D}_{\mathrm{KL}}\left[\mathbb{P}\|\mathbb{Q}^{\rho}\right] + \mathbb{E}_{\mathbb{P}}[V(x_{0},\ldots,x_{T})],\label{eq:objective function of SBP}
     \end{align}
     where
     \begin{align}
        V_{k}(x_{k},x_{k+1}):=&\frac{1}{2}\|x_{k+1}-A_{k}x_{k}\|_{B_{k}^{\dagger \top}B_{k}^{\dagger}}^{2}, \label{eq:decomposed potential energy in SBP}\\
         V(x_{0},\ldots,x_{T}):=&\sum_{k=0}^{T-1}V_{k}(x_{k},x_{k+1}) .\label{eq:potential energy in SBP}
     \end{align}
   
     \label{prob:SBP}
\end{prob}

If we fix the reference process distribution $\mathbb{Q}^{\rho}$ and ignore the second term in \eqref{eq:objective function of SBP}, Problem \ref{prob:SBP} reduces to the conventional Schr\"{o}dinger bridge problem.
The optimization of $\mathbb{Q}^{\rho}$ corresponds to the reference refinement, which serves as the estimation of $\Sigma_{\rho_{k}}$.
In addition, the second term in \eqref{eq:objective function of SBP} renders Problem \ref{prob:SBP} a generalized Schr\"{o}dinger bridge problem, which incorporates desired specifications of the state.
For example, if we set $V(x_{0},\ldots,x_{T})=\sum_{k=0}^{T}\|x_{k}\|^{2}$ instead of \eqref{eq:potential energy in SBP}, the regularization term reflects a prior knowledge that the state in the controlled process tends to stay near zero.
In the case with \eqref{eq:potential energy in SBP}, assuming that the controlled process is given in a form $x_{k+1}=A_{k}x_{k}+B_{k}u_{k}$ similar to the reference process \eqref{eq:reference process}, the regularization term implies that the controlled process is realized with as little input energy as possible.

\begin{rem}
    We roughly show the equivalence between Problems \ref{prob:MIODCP} and \ref{prob:SBP} under some simplifying assumptions.
    We assume that the following controlled process
    \begin{align*}
        x_{k+1}=A_{k}x_{k}+B_{k}u_{k}, u_{k}\sim \pi_{k}(\cdot|x_{k}),x_{0}\sim \mathcal{N}(0,\Sigma_{\mathrm{ini}}),
    \end{align*}
    which will be justified by Theorem \ref{thm:optimal bridge process for fixed reference process}.
    In addition, we assume that $B_{k}=I$ for simplicity.
    Then, the KL term $\mathcal{D}_{\mathrm{KL}}\left[\mathbb{P}\|\mathbb{Q}^{\rho}\right]$ in \eqref{eq:objective function of SBP} can be rewritten as
    \begin{align}
        &\mathcal{D}_{\mathrm{KL}}\left[\mathbb{P}\|\mathbb{Q}^{\rho}\right]\nonumber\\
        =&  \mathbb{E}_{\mathbb{P}}\left[\sum_{k=0}^{T-1} \mathcal{D}_{\mathrm{KL}}[\mathbb{P}_{k+1|k}(\cdot|x_{k})\|\mathbb{Q}_{k+1|k}^{\rho}(\cdot|x_{k})]\right]\nonumber\\
        &+ \mathcal{D}_{\mathrm{KL}}[\mathbb{P}_{0}\|\mathbb{Q}_{0}^{\rho}]\label{eq:decomposed KL term in GSB}\\
        =&
        \mathbb{E}_{\mathbb{P}}\left[\sum_{k=0}^{T-1} \mathcal{D}_{\mathrm{KL}}[\pi_{k}(\cdot|x_{k})\|\rho_{k}]\right]+ \mathcal{D}_{\mathrm{KL}}[\mathbb{P}_{0}\|\mathbb{Q}_{0}^{\rho}],\nonumber
    \end{align}
    which coincides with the KL regularization term in \eqref{eq:objective function of MIODCP} up to an additive constant $\mathcal{D}_{\mathrm{KL}}[\mathbb{P}_{0}\|\mathbb{Q}_{0}^{\rho}]$.
    In addition, the regularization term in \eqref{eq:objective function of SBP} can be rewritten as
    \begin{align*}
        \mathbb{E}_{\mathbb{P}}[V(x_{0},\ldots,x_{T})] =&  \mathbb{E}_{\mathbb{P}}\left[\sum_{k=0}^{T-1}V_{k}(x_{k},x_{k+1})\right]\\
        =&
        \mathbb{E}_{\mathbb{P}}\left[\sum_{k=0}^{T-1} \frac{1}{2}\|u_{k}\|^{2}\right],
    \end{align*}
    which coincides with the quadratic cost in \eqref{eq:objective function of MIODCP}.
    These equations imply the equivalence between Problems \ref{prob:MIODCP} and \ref{prob:SBP}.
    For a general matrix $B_k$, such a term-by-term equivalence between the objective functions does not hold.
    However, it will be shown in Section \ref{sec:Alternating Optimization for Schrodinger Bridges} that the corresponding steps of the alternating optimization for Problems \ref{prob:MIODCP} and \ref{prob:SBP} coincide.
  
    \label{rem:rough equivalence between MIODCP and GSB}
\end{rem}

\begin{rem}
    It may seem unnatural that the initial distribution of the reference process $\mathcal{N}(\mu_{\mathrm{ref}},\Sigma_{\mathrm{ref}})$ differs from the initial snapshot data $\mathcal{N}(0,\Sigma_{\mathrm{ini}})$.
    In fact, we can change the initial distribution $\mathbb{Q}^{\rho}_{0}=\mathcal{N}(\mu_{\mathrm{ref}},\Sigma_{\mathrm{ref}})$ to an arbitrary Gaussian distribution, including $\mathcal{N}(0,\Sigma_{\mathrm{ini}})$, without loss of generality.
    This is because the KL divergence term in \eqref{eq:objective function of SBP} can be decomposed as in \eqref{eq:decomposed KL term in GSB}, where the difference between the initial distributions $\mathcal{N}(\mu_{\mathrm{ref}},\Sigma_{\mathrm{ref}})$ and $\mathcal{N}(0,\Sigma_{\mathrm{ini}})$ only affects the term $\mathcal{D}_{\mathrm{KL}}[\mathbb{P}_{0}\|\mathbb{Q}_{0}^{\rho}]$, which reduces to a constant.
    \label{rem:validity of difference between initial distributions}
\end{rem}

We close this section by introducing the following proposition.

\begin{prop}
    For any prior $\rho \in \mathcal{R}_{0}$, the optimal controlled process distribution of Problem \ref{prob:SBP} with fixed reference process distribution $\mathbb{Q}^{\rho}$ is Markov.
    \label{prop:Markov solution of SBP}
\end{prop}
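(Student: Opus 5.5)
The plan is to show that any admissible controlled process can be replaced by a Markov one with the same two-time marginals $\mathbb{P}_{k,k+1}$, and that this replacement never increases the objective \eqref{eq:objective function of SBP}. Fix $\rho\in\mathcal{R}_{0}$ and take any $\mathbb{P}\in\Pi(\Sigma_{\mathrm{ini}},\Sigma_{\mathrm{fin}})$ with finite objective. Define its Markovization
\begin{align*}
\mathbb{M}(dx_{0},\ldots,dx_{T}) := \mathbb{P}_{0}(dx_{0})\prod_{k=0}^{T-1}\mathbb{P}_{k+1|k}(dx_{k+1}|x_{k}).
\end{align*}
By induction on $k$, $\mathbb{M}$ has the same one-time marginals $\mathbb{M}_{k}=\mathbb{P}_{k}$ and the same pair marginals $\mathbb{M}_{k,k+1}=\mathbb{P}_{k,k+1}$. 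Hence $\mathbb{M}\in\Pi(\Sigma_{\mathrm{ini}},\Sigma_{\mathrm{fin}})$. Since $V=\sum_{k}V_{k}(x_{k},x_{k+1})$ depends only on consecutive pairs, we also get $\mathbb{E}_{\mathbb{M}}[V]=\mathbb{E}_{\mathbb{P}}[V]$.

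Next I would compare the KL terms via the chain rule. The reference $\mathbb{Q}^{\rho}$ is Markov by \eqref{eq:reference process}, since the noises $w_{k}^{\rho}$ are independent of the past. Disintegrating $\mathbb{P}$ along time gives
\begin{align*}
\mathcal{D}_{\mathrm{KL}}[\mathbb{P}\|\mathbb{Q}^{\rho}] = \mathcal{D}_{\mathrm{KL}}[\mathbb{P}_{0}\|\mathbb{Q}^{\rho}_{0}] + \sum_{k=0}^{T-1}\mathbb{E}_{\mathbb{P}}\left[\mathcal{D}_{\mathrm{KL}}[\mathbb{P}_{k+1|0:k}(\cdot|x_{0:k})\|\mathbb{Q}^{\rho}_{k+1|k}(\cdot|x_{k})]\right].
\end{align*}
Each summand splits as
\begin{align*}
\mathbb{E}_{\mathbb{P}}\left[\mathcal{D}_{\mathrm{KL}}[\mathbb{P}_{k+1|k}(\cdot|x_{k})\|\mathbb{Q}^{\rho}_{k+1|k}(\cdot|x_{k})]\right] + \mathcal{I}_{\mathbb{P}}(x_{k+1};x_{0:k-1}\mid x_{k}).
\end{align*}
The conditional mutual information is nonnegative, and it vanishes for all $k$ if and only if $\mathbb{P}$ is Markov. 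The first part depends only on $\mathbb{P}_{k,k+1}$, so it equals the corresponding term for $\mathbb{M}$. It follows that $\mathcal{D}_{\mathrm{KL}}[\mathbb{P}\|\mathbb{Q}^{\rho}]\ge\mathcal{D}_{\mathrm{KL}}[\mathbb{M}\|\mathbb{Q}^{\rho}]$, with equality if and only if $\mathbb{P}=\mathbb{M}$.

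Combining the two comparisons, any non-Markov $\mathbb{P}$ is strictly beaten by its Markovization. Therefore every optimal controlled process must be Markov, which is the claim.

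The main obstacle is the measure-theoretic rigor of the chain-rule step. I would handle it by using regular conditional distributions on the Polish space $\chi$ and restricting to $\mathbb{P}\ll\mathbb{Q}^{\rho}$; otherwise the KL term is infinite and such $\mathbb{P}$ cannot be optimal. A further subtlety is that $B_{k}$ may be rank deficient, so $\mathbb{Q}^{\rho}_{k+1|k}$ may be a degenerate Gaussian supported on $A_{k}x_{k}+\mathrm{Im}(B_{k})$. This does not break the argument, because the decomposition only uses absolute continuity of the kernels, not the existence of Lebesgue densities. Also, the term $\mathbb{E}_{\mathbb{P}}[V]$ is finite whenever $\mathbb{P}$ has finite second moments, and otherwise both sides compare consistently, so no integrability issue arises there.
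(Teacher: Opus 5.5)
Your proposal is correct and takes the same route as the paper. Like the paper, you replace $\mathbb{P}$ by its Markovization $\mathbb{P}_{0}\prod_{k}\mathbb{P}_{k+1|k}$, which keeps the pair marginals and hence the marginal constraints and $\mathbb{E}_{\mathbb{P}}[V]$, then use the chain rule against the Markov reference $\mathbb{Q}^{\rho}$ to isolate the nonnegative term $\sum_{k}\mathbb{E}_{\mathbb{P}}[\mathcal{D}_{\mathrm{KL}}[\mathbb{P}(\cdot|x_{0},\ldots,x_{k})\|\mathbb{P}_{k+1|k}(\cdot|x_{k})]]$ (your conditional mutual information), which is strictly positive unless $\mathbb{P}$ is Markov; your remarks on regular conditional distributions, absolute continuity and degenerate kernels tighten details the paper leaves implicit.
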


\begin{proof}
    Noting that $\mathbb{Q}^{\rho}$ is Markov from \eqref{eq:reference process}, we can decompose $\mathbb{P}$ and $\mathbb{Q}^{\rho}$ as
    \begin{align*}
        \mathbb{P}(x_{0},\ldots,x_{T}) =& \mathbb{P}_{0}(x_{0}) \prod_{k=0}^{T-1} \mathbb{P}(x_{k+1} | x_{0},\ldots,x_{k}),\\
        \mathbb{Q}^{\rho}(x_{0},\ldots,x_{T}) =& \mathbb{Q}_{0}^{\rho}(x_0) \prod_{k=0}^{T-1} \mathbb{Q}_{k+1|k}^{\rho}(x_{k+1} | x_{k}).
    \end{align*}
    Using this expression, we have
    \begin{align}
        &\mathcal{D}_{\mathrm{KL}}[\mathbb{P}\|\mathbb{Q}^{\rho}]\nonumber\\
        =& \mathcal{D}_{\mathrm{KL}}[\mathbb{P}_{0}\|\mathbb{Q}_{0}^{\rho}] +  \sum_{k=0}^{T-1} \mathbb{E}_{\mathbb{P}_{k}}[\mathcal{D}_{\mathrm{KL}}[\mathbb{P}_{k+1|k}(\cdot|x_{k})\|\mathbb{Q}_{k+1|k}^{\rho}(\cdot|x_{k})]]\nonumber\\
        &+  \sum_{k=0}^{T-1} \mathbb{E}_{\mathbb{P}}[\mathcal{D}_{\mathrm{KL}}[\mathbb{P}(\cdot|x_{0},\ldots,x_{k})\|\mathbb{P}_{k+1|k}(\cdot|x_{k})]]. \nonumber 
    \end{align}
    Suppose that an optimal controlled process distribution $\mathbb{P}$ of Problem \ref{prob:SBP} with fixed $\mathbb{Q}^{\rho}$ is not Markov.
    Using $\mathbb{P}$, we construct a Markov distribution $\mathbb{P}^{\prime}$ such that
    \begin{align*}
        \mathbb{P}^{\prime}(x_{0},\ldots,x_{T})=\mathbb{P}_{0}(x_{0})\prod_{k=0}^{T-1}\mathbb{P}_{k+1|k}(x_{k+1}|x_{k}).
    \end{align*}
    Noting that $\mathbb{P}_{0}=\mathbb{P}^{\prime}_{0}$ and $\mathbb{P}(x_{k},x_{k+1})=\mathbb{P}^{\prime}(x_{k},x_{k+1})$, we have
    \begin{align*}
        &\mathcal{D}_{\mathrm{KL}}\left[\mathbb{P}\|\mathbb{Q}^{\rho}\right] + \mathbb{E}_{\mathbb{P}}[V(x_{0},\ldots,x_{T})]\\
        =& \mathcal{D}_{\mathrm{KL}}[\mathbb{P}_{0}\|\mathbb{Q}_{0}^{\rho}] +  \sum_{k=0}^{T-1} \mathbb{E}_{\mathbb{P}_{k}}[\mathcal{D}_{\mathrm{KL}}[\mathbb{P}_{k+1|k}(\cdot|x_{k})\|\mathbb{Q}_{k+1|k}^{\rho}(\cdot|x_{k})]]\nonumber\\
        &+  \sum_{k=0}^{T-1} \mathbb{E}_{\mathbb{P}}[\mathcal{D}_{\mathrm{KL}}[\mathbb{P}(\cdot|x_{0},\ldots,x_{k})\|\mathbb{P}_{k+1|k}(\cdot|x_{k})]]\\
        &+\sum_{k=0}^{T-1}\mathbb{E}_{\mathbb{P}}[V_{k}(x_{k},x_{k+1})]\\
        >& \mathcal{D}_{\mathrm{KL}}[\mathbb{P}_{0}^{\prime}\|\mathbb{Q}_{0}^{\rho}] +  \sum_{k=0}^{T-1} \mathbb{E}_{\mathbb{P}_{k}^{\prime}}[\mathcal{D}_{\mathrm{KL}}[\mathbb{P}_{k+1|k}^{\prime}(\cdot|x_{k})\|\mathbb{Q}_{k+1|k}^{\rho}(\cdot|x_{k})]]\nonumber\\
        &+\sum_{k=0}^{T-1}\mathbb{E}_{\mathbb{P}^{\prime}}[V_{k}(x_{k},x_{k+1})]\\
        =&\mathcal{D}_{\mathrm{KL}}\left[\mathbb{P}^{\prime}\|\mathbb{Q}^{\rho}\right] + \mathbb{E}_{\mathbb{P}^{\prime}}[V(x_{0},\ldots,x_{T})],
    \end{align*}
    which contradicts the optimality of $\mathbb{P}$.
    Therefore, the claim of Proposition \ref{prop:Markov solution of SBP} holds.
\end{proof}

On the basis of Proposition \ref{prop:Markov solution of SBP}, we only consider Markov controlled processes henceforth.

\section{Alternating Optimization for Generalized Schr\"{o}dinger Bridges} \label{sec:Alternating Optimization for Schrodinger Bridges}

In this section, we propose an alternating optimization algorithm for Problem \ref{prob:SBP} by deriving the optimal controlled process and the optimal reference processes for a fixed reference process and a fixed controlled process, respectively.
Through the proposal of the alternating optimization algorithm, we also reveal the equivalence between the alternating optimization procedures in Problems \ref{prob:MIODCP} and \ref{prob:SBP}.
In addition, we extend the alternating optimization algorithm to a case with nonzero mean marginal constraints.

\subsection{Optimal Controlled Process for Fixed Reference Process} \label{subsec:Optimal Bridge Process for Fixed Reference Process}

Denote by $\mathbb{P}^{\pi}$ the probability distribution of the following controlled process driven by a policy $\pi$.
\begin{align*}
    x_{k+1}=A_{k}x_{k}+B_{k}u_{k}, u_{k}\sim \pi_{k}(\cdot|x_{k}),x_{0}\sim \mathcal{N}(0,\Sigma_{\mathrm{ini}}).
\end{align*}
Under some assumptions, we show that the optimal state process of Problem \ref{prob:MIODCP} with the prior $\rho$ fixed is actually the optimal controlled process of Problem \ref{prob:SBP} with the reference process distribution $\mathbb{Q}^{\rho}$ fixed.

\begin{thm}
    Consider a given prior $\rho \in \mathcal{R}_{0}, \rho_{k}=\mathcal{N}(0,\Sigma_{\rho_{k}})$.
    Suppose that the same assumptions of Theorem \ref{thm:optimal policy of MIODCP with prior fixed} are satisfied.
    In addition, assume that $B_{k}$ is full column rank for any $k\in \llbracket0,T-1 \rrbracket$.
    Then, the unique optimal controlled process distribution of Problem \ref{prob:SBP} with fixed reference process distribution $\mathbb{Q}^{\rho}$ is given by $\mathbb{P}^{\pi^{\rho}}$.
  
    \label{thm:optimal bridge process for fixed reference process}
\end{thm}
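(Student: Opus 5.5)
The plan is to reduce Problem \ref{prob:SBP} with $\mathbb{Q}^{\rho}$ fixed to Problem \ref{prob:MIODCP} with $\rho$ fixed, and then invoke Theorem \ref{thm:optimal policy of MIODCP with prior fixed}. The reduction is a bijection between Markov controlled processes with finite cost and policies satisfying the marginal constraints. The bijection should preserve the objective up to an additive constant.

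By Proposition \ref{prop:Markov solution of SBP}, it suffices to consider Markov $\mathbb{P}\in\Pi(\Sigma_{\mathrm{ini}},\Sigma_{\mathrm{fin}})$, and I use the decomposition \eqref{eq:decomposed KL term in GSB}. Since $\mathbb{P}_{0}=\mathcal{N}(0,\Sigma_{\mathrm{ini}})$ is fixed, the term $\mathcal{D}_{\mathrm{KL}}[\mathbb{P}_{0}\|\mathbb{Q}_{0}^{\rho}]$ is a constant independent of $\mathbb{P}$.

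\emph{Step 1: support.} The transition $\mathbb{Q}^{\rho}_{k+1|k}(\cdot|x)$ is the law of $A_{k}x+B_{k}w$ with $w\sim\mathcal{N}(0,\Sigma_{\rho_{k}})$. It is therefore a nondegenerate Gaussian on the affine subspace $A_{k}x+\mathrm{Im}(B_{k})$, since $B_{k}$ is injective and $\Sigma_{\rho_{k}}\succ 0$. Finiteness of the objective forces $\mathbb{P}_{k+1|k}(\cdot|x)\ll\mathbb{Q}^{\rho}_{k+1|k}(\cdot|x)$ for $\mathbb{P}_{k}$-a.e.\ $x$. Hence $x_{k+1}-A_{k}x_{k}\in\mathrm{Im}(B_{k})$ holds $\mathbb{P}$-a.s.

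\emph{Step 2: from transitions to policies.} Because $B_{k}$ has full column rank, $B_{k}^{\dagger}B_{k}=I$. So $u_{k}:=B_{k}^{\dagger}(x_{k+1}-A_{k}x_{k})$ is the unique vector with $x_{k+1}=A_{k}x_{k}+B_{k}u_{k}$. Let $\pi_{k}(\cdot|x_{k})$ denote its conditional law. Then $\mathbb{P}=\mathbb{P}^{\pi}$, and $\pi$ satisfies \eqref{eq:linear system of MIODCP}--\eqref{eq:terminal condition of MIODCP}. Conversely, every such $\pi$ yields $\mathbb{P}^{\pi}\in\Pi(\Sigma_{\mathrm{ini}},\Sigma_{\mathrm{fin}})$. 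Moreover, $\pi\mapsto\mathbb{P}^{\pi}$ is injective up to $\mathbb{P}_{k}$-null sets.

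\emph{Step 3: matching the objectives.} The map $u\mapsto A_{k}x+B_{k}u$ is a measurable bijection onto its image. Hence, exactly as in Remark \ref{rem:identical cost coefficients}, $\mathcal{D}_{\mathrm{KL}}[\mathbb{P}_{k+1|k}(\cdot|x)\|\mathbb{Q}^{\rho}_{k+1|k}(\cdot|x)]=\mathcal{D}_{\mathrm{KL}}[\pi_{k}(\cdot|x)\|\rho_{k}]$. In addition, $V_{k}(x_{k},x_{k+1})=\frac12\|B_{k}^{\dagger}B_{k}u_{k}\|^{2}=\frac12\|u_{k}\|^{2}$. Therefore the objective \eqref{eq:objective function of SBP} evaluated at $\mathbb{P}^{\pi}$ equals $J(\pi,\rho)+\mathcal{D}_{\mathrm{KL}}[\mathcal{N}(0,\Sigma_{\mathrm{ini}})\|\mathbb{Q}_{0}^{\rho}]$.

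Minimizing over $\mathbb{P}$ is thus equivalent to minimizing $J(\cdot,\rho)$ over feasible policies. By Theorem \ref{thm:optimal policy of MIODCP with prior fixed}, the unique minimizer of the latter is $\pi^{\rho}$, so $\mathbb{P}^{\pi^{\rho}}$ is optimal. Uniqueness of the optimal $\mathbb{P}$ follows from injectivity in Step 2 together with uniqueness of $\pi^{\rho}$, with Proposition \ref{prop:Markov solution of SBP} excluding non-Markov competitors.

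The main obstacle I expect is Step 1 together with the KL identity in Step 3. The reference transition is singular with respect to Lebesgue measure on $\mathbb{R}^{n}$ when $m<n$, so the KL divergences must be handled carefully on the affine subspace. I would treat this by pushing everything forward through $B_{k}^{\dagger}$ and applying the data processing equality for injective maps in both directions. The policy is only determined $\mathbb{P}_{k}$-a.e., and this is harmless because $\mathbb{P}_{k}$ is exactly the measure under which the expectations in $J$ are taken.
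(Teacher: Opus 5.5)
Your argument is correct, but it takes a different route from the paper's.

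\textbf{The paper's route.} The paper never parametrizes $\mathbb{P}$ by a policy. It introduces an auxiliary reference $\hat{\mathbb{Q}}^{\rho}$ driven by $B_{k}^{\rho}v_{k}$, $v_{k}\sim\mathcal{N}(0,I)$. It then computes $\log(d\mathbb{Q}^{\rho}_{k+1|k}/d\hat{\mathbb{Q}}^{\rho}_{k+1|k})=V_{k}+\mathrm{const}$. This is a density ratio of Gaussians that are singular on $\mathbb{R}^{n}$ when $m<n$, handled via pseudo-inverses. The computation recasts the fixed-reference problem as the classical bridge $\min_{\mathbb{P}}\mathcal{D}_{\mathrm{KL}}[\mathbb{P}\|\hat{\mathbb{Q}}^{\rho}]$. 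The paper then imports \cite[Theorem 3]{ito2023maximum} to identify the solution with the MaxEnt density-control process for $\bar{B}_{k}=B_{k}^{\rho}$. Finally, Lemma~\ref{lem:equivalence between MEOCP} translates that process back to $\mathbb{P}^{\pi^{\rho}}$.

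\textbf{Your route.} You use the change of variables $u_{k}=B_{k}^{\dagger}(x_{k+1}-A_{k}x_{k})$ to show that the fixed-reference objective equals $J(\pi,\rho)+\mathcal{D}_{\mathrm{KL}}[\mathcal{N}(0,\Sigma_{\mathrm{ini}})\|\mathbb{Q}^{\rho}_{0}]$. You then invoke Theorem~\ref{thm:optimal policy of MIODCP with prior fixed} directly.

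\textbf{What each buys.} Your version is more self-contained: it needs no external Schr\"{o}dinger bridge theorem and no singular density ratio. It also makes the Markov reduction and the transfer of uniqueness explicit. It even shows more than the paper claims: the term-by-term equivalence of Remark~\ref{rem:rough equivalence between MIODCP and GSB} holds for every full-column-rank $B_{k}$, not only for $B_{k}=I$.

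The paper's version reveals that the potential $V$ simply shrinks the reference noise covariance from $\Sigma_{\rho_{k}}$ to $(I+\Sigma_{\rho_{k}}^{-1})^{-1}$. It reuses existing bridge theory wholesale, and the paper reuses the same device in Appendix~\ref{app:Proof of Proposition of zero mean optimal reference process}. The device also extends to rank-deficient $B_{k}$, with the modified $V_{k}$ of Remark~\ref{rem:validity of assumptions of Theorem of optimal bridge process}. Your bijection depends on $B_{k}$ being injective and would not survive that extension. There $u_{k}$ can no longer be recovered from the state path, so your KL and $V$ identities weaken to inequalities.
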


\begin{proof}
    Denote by $\hat{\mathbb{Q}}^{\rho}$ the probability distribution of the following state process on $\chi$ in this proof.
    \begin{align*}
        x_{k+1}=A_{k}x_{k} + B_{k}^{\rho}v_{k}, v_{k}\sim \mathcal{N}(0,I),x_{0}\sim \mathcal{N}(\mu_{\mathrm{ref}},\Sigma_{\mathrm{ref}}).
    \end{align*}
    Because $\mathbb{Q}_{0}^{\rho} = \hat{\mathbb{Q}}_{0}^{\rho}$, we have
    \begin{align*}
        &\mathcal{D}_{\mathrm{KL}}\left[\mathbb{P}\|  \hat{\mathbb{Q}}^{\rho}\right]\\
        =&\mathcal{D}_{\mathrm{KL}}\left[\mathbb{P}\|  \mathbb{Q}^{\rho} \right] \\
        &+ \int_{\chi}\log \frac{d\mathbb{Q}^{\rho}}{d\hat{\mathbb{Q}}^{\rho}}(x_{0},\ldots,x_{T})d\mathbb{P}(x_{0},\ldots,x_{T}) \\
        =&\mathcal{D}_{\mathrm{KL}}\left[\mathbb{P}\|  \mathbb{Q}^{\rho} \right] \\
        &+ \sum_{k=0}^{T-1}\int_{\chi}\log \frac{d\mathbb{Q}_{k+1|k}^{\rho}}{d\hat{\mathbb{Q}}_{k+1|k}^{\rho}}(x_{k+1}|x_{k})d\mathbb{P}(x_{0},\ldots,x_{T}).
    \end{align*}
    Because $B_{k}$ is full column rank, it follows that
    \begin{align*}
         &\frac{d\mathbb{Q}_{k+1|k}^{\rho}}{d\hat{\mathbb{Q}}_{k+1|k}^{\rho}}(x_{k+1}|x_{k})\\
         = &\frac{d\mathcal{N}(A_{k}x_{k}, B_{k}\Sigma_{\rho_{k}}B_{k}^{\top})}{d\mathcal{N}(A_{k}x_{k},B_{k}(I+\Sigma_{\rho_{k}}^{-1})^{-1}B_{k}^{\top})}(x_{k+1}|x_{k})\\
         =&\frac{1}{\sqrt{|I+\Sigma_{\rho_{k}}|}}\exp\left[-\frac{1}{2}\|x_{k+1}-A_{k}x_{k}\|_{(B_{k}\Sigma_{\rho_{k}}B_{k}^{\top})^{\dagger}}^{2} \right.\\
         &\left.+\frac{1}{2}\|x_{k+1}-A_{k}x_{k}\|_{(B_{k}(I+\Sigma_{\rho_{k}}^{-1})^{-1}B_{k}^{\top})^{\dagger}}^{2}\right]\\
          =&\frac{1}{\sqrt{|I+\Sigma_{\rho_{k}}|}}\exp\left[\frac{1}{2}\|x_{k+1}-A_{k}x_{k}\|_{B_{k}^{\dagger \top}B_{k}^{\dagger}}^{2} \right].
    \end{align*}
    Therefore, we have
    \begin{align*}
        \mathcal{D}_{\mathrm{KL}}\left[\mathbb{P}\|  \hat{\mathbb{Q}}^{\rho}\right]=&\mathcal{D}_{\mathrm{KL}}\left[\mathbb{P}\|\mathbb{Q}^{\rho}\right] + \mathbb{E}_{\mathbb{P}}[V(x_{0},\ldots,x_{T})]\\
         &+(\text{Terms independent of }\mathbb{P}).
    \end{align*}
    This implies that the optimal controlled process distribution of Problem \ref{prob:SBP} with fixed $\mathbb{Q}^{\rho}$ is given by the optimal solution to the following Schr\"{o}dinger bridge problem.
    \begin{align}
        \min_{\mathbb{P} \in \Pi(\Sigma_{\mathrm{ini}},\Sigma_{\mathrm{fin}} )}\mathcal{D}_{\mathrm{KL}}\left[\mathbb{P}\|  \hat{\mathbb{Q}}^{\rho}\right]. \label{eq:rewritten SBP with fixed reference}
    \end{align}
    By applying \cite[Theorem 3]{ito2023maximum}, the optimal controlled process of \eqref{eq:rewritten SBP with fixed reference} is uniquely given by
    \begin{align}
        x_{k+1}=A_{k}x_{k}+B_{k}^{\rho}u_{k}, x_{0}\sim \mathcal{N}(0,\Sigma_{\mathrm{ini}}), u_{k}\sim \pi_{k}^{\mathrm{ME}}(\cdot |x_{k}), \label{eq:optimal bridge process with reference fixed}
    \end{align}
    where $\pi_{k}^{\mathrm{ME}}(\cdot |x_{k})$ in \eqref{eq:optimal bridge process with reference fixed} is the unique optimal solution to Problem \ref{prob:MEODCP} with $\bar{B}_{k}=B_{k}^{\rho}$.
    From Lemma \ref{lem:equivalence between MEOCP}, the state process \eqref{eq:optimal bridge process with reference fixed} can be rewritten as 
    \begin{align*}
        x_{k+1}=A_{k}x_{k}+B_{k}u_{k}, x_{0}\sim \mathcal{N}(0,\Sigma_{\mathrm{ini}}), u_{k}\sim \pi_{k}^{\rho}(\cdot |x_{k}),
    \end{align*}
    which completes this proof.
\end{proof}

\begin{rem}
    Let us give a remark on the assumption of the full column rank $B_{k}$.
    Indeed, we can obtain a similar result even if we do not assume full column rank $B_{k}$.
    However, in this case, we have to set $V_{k}(x_{k},x_{k+1})=\frac{1}{2}\|x_{k+1}-A_{k}x_{k}\|_{(B_{k}(I+\Sigma_{\rho_{k}}^{-1})^{-1}B_{k}^{\top})^{\dagger}- (B_{k}\Sigma_{\rho_{k}}B_{k}^{\top})^{\dagger}}^{2}$ instead of \eqref{eq:decomposed potential energy in SBP}, which complicates the expression and interpretation of Problem \ref{prob:SBP}.
    Note that it is not restrictive to assume that $B_{k}$ is full column rank.
    For example, see \cite[Section 6.2.1]{chen1984linear}.
  
    \label{rem:validity of assumptions of Theorem of optimal bridge process}
\end{rem}

\subsection{Optimal Reference Process for Fixed Controlled Process} \label{subsec:Optimal Reference Process for Fixed Bridge Process}

From Theorem \ref{thm:optimal bridge process for fixed reference process}, we can restrict the class $\Pi(\Sigma_{\mathrm{ini}},\Sigma_{\mathrm{fin}})$ of the controlled process distributions in Problem \ref{prob:SBP} to
\begin{align*}
    \Pi(\Sigma_{\mathrm{ini}},\Sigma_{\mathrm{fin}};\mathcal{P}_{0}):=\left\{\mathbb{P}^{\pi} \in \Pi(\Sigma_{\mathrm{ini}},\Sigma_{\mathrm{fin}})|\pi \in \mathcal{P}_{0}\right\}
\end{align*}
without loss of generality.
On the basis of this observation, we show that the optimal prior of Problem \ref{prob:MEODCP} with fixed $\pi$ yields the optimal reference process of Problem \ref{prob:SBP} with the controlled process distribution $\mathbb{P}^{\pi}$ fixed.

\begin{thm}
    Consider a given policy $\pi \in \mathcal{P}_{0},\pi_{k}(\cdot|x)=\mathcal{N}(P_{k}x,\Sigma_{\pi_{k}})$, which satisfies $\mathbb{P}^{\pi} \in \Pi(\Sigma_{\mathrm{ini}},\Sigma_{\mathrm{fin}};\mathcal{P}_{0})$.
    Then, an optimal reference process distribution of Problem \ref{prob:SBP} with fixed controlled process distribution $\mathbb{P}^{\pi}$ is given by $\mathbb{Q}^{\rho^{\pi}}$.
    In addition, assuming that $B_{k}$ is full column rank for any $k \in \llbracket0,T-1 \rrbracket$, the optimal solution $\mathbb{Q}^{\rho^{\pi}}$ is unique.
  
    \label{thm:optimal reference process for fixed bridge process}
\end{thm}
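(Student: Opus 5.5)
With $\mathbb{P}=\mathbb{P}^{\pi}$ fixed, the term $\mathbb{E}_{\mathbb{P}}[V]$ in \eqref{eq:objective function of SBP} does not depend on the reference process, so only $\mathcal{D}_{\mathrm{KL}}[\mathbb{P}^{\pi}\|\mathbb{Q}^{\rho}]$ needs to be minimized over $\rho\in\mathcal{R}_{0}$. Since both processes are Markov, I will use the decomposition \eqref{eq:decomposed KL term in GSB}. The initial term $\mathcal{D}_{\mathrm{KL}}[\mathbb{P}_{0}\|\mathbb{Q}_{0}^{\rho}]$ does not depend on $\rho$, because $\mathbb{Q}_{0}^{\rho}=\mathcal{N}(\mu_{\mathrm{ref}},\Sigma_{\mathrm{ref}})$. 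The problem therefore splits into $T$ independent problems
\begin{align*}
\min_{\Sigma_{\rho_{k}}\succ 0}\ \mathbb{E}_{\mathbb{P}^{\pi}_{k}}\left[\mathcal{D}_{\mathrm{KL}}\left[\mathbb{P}^{\pi}_{k+1|k}(\cdot|x_{k})\|\mathbb{Q}^{\rho}_{k+1|k}(\cdot|x_{k})\right]\right],
\end{align*}
where $\mathbb{P}^{\pi}_{k+1|k}(\cdot|x)=\mathcal{N}((A_{k}+B_{k}P_{k})x,B_{k}\Sigma_{\pi_{k}}B_{k}^{\top})$, $\mathbb{Q}^{\rho}_{k+1|k}(\cdot|x)=\mathcal{N}(A_{k}x,B_{k}\Sigma_{\rho_{k}}B_{k}^{\top})$, and $x_{k}\sim\mathcal{N}(0,\Sigma_{x_{k}})$ with $\Sigma_{x_{k}}$ given by \eqref{eq:evolution of covariance matrix of state}.

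The key step is to identify each conditional KL with the input-space KL $\mathcal{D}_{\mathrm{KL}}[\pi_{k}(\cdot|x)\|\rho_{k}]$, so that Theorem \ref{thm:optimal prior of MIODCP with policy fixed}, via \eqref{eq:MIODCP with fixed policy}, applies directly. Both conditionals are images of Gaussians on $\mathbb{R}^{m}$ under the same affine map $u\mapsto A_{k}x+B_{k}u$: $\pi_{k}(\cdot|x)=\mathcal{N}(P_{k}x,\Sigma_{\pi_{k}})$ and $\rho_{k}=\mathcal{N}(0,\Sigma_{\rho_{k}})$. By the data processing inequality,
\begin{align*}
\mathcal{D}_{\mathrm{KL}}[\mathbb{P}^{\pi}_{k+1|k}(\cdot|x)\|\mathbb{Q}^{\rho}_{k+1|k}(\cdot|x)]\le \mathcal{D}_{\mathrm{KL}}[\pi_{k}(\cdot|x)\|\rho_{k}].
\end{align*}
When $B_{k}$ has full column rank the map is injective, and equality holds, as in Remark \ref{rem:identical cost coefficients}. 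In that case the reference-refinement objective coincides termwise with \eqref{eq:MIODCP with fixed policy}, so Theorem \ref{thm:optimal prior of MIODCP with policy fixed} gives the unique minimizer $\Sigma_{\rho_{k}}=\Sigma_{\pi_{k}}+P_{k}\Sigma_{x_{k}}P_{k}^{\top}$, that is, $\rho=\rho^{\pi}$. This proves both optimality and uniqueness of $\mathbb{Q}^{\rho^{\pi}}$ under the full column rank assumption.

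The main obstacle is the general case, where $B_{k}$ may fail to be injective and only optimality (not uniqueness) is claimed. Here I would work directly with degenerate Gaussians supported on $\mathrm{Im}(B_{k})$. Write $B_{k}=U_{k}R_{k}$, where $U_{k}$ has orthonormal columns spanning $\mathrm{Im}(B_{k})$ and $R_{k}$ is surjective. Both conditionals live on the affine subspace $A_{k}x+\mathrm{Im}(B_{k})$. The KL then equals the KL between the pushforwards of $\pi_{k}(\cdot|x)$ and $\rho_{k}$ under $R_{k}$, which are $\mathcal{N}(R_{k}P_{k}x,R_{k}\Sigma_{\pi_{k}}R_{k}^{\top})$ and $\mathcal{N}(0,R_{k}\Sigma_{\rho_{k}}R_{k}^{\top})$. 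These are nondegenerate, since $R_{k}$ is surjective and both covariances are positive definite. The objective therefore depends on $\Sigma_{\rho_{k}}$ only through $S_{k}:=R_{k}\Sigma_{\rho_{k}}R_{k}^{\top}$. By the same Gaussian computation behind Theorem \ref{thm:optimal prior of MIODCP with policy fixed} (the expected KL is $\frac12[\mathrm{Tr}(S^{-1}M)+\log|S|]+\text{const}$ with $M$ the marginal covariance), the unique minimizer is
\begin{align*}
S_{k}=R_{k}\left(\Sigma_{\pi_{k}}+P_{k}\Sigma_{x_{k}}P_{k}^{\top}\right)R_{k}^{\top}.
\end{align*}
The choice $\rho=\rho^{\pi}$ attains this value, so $\mathbb{Q}^{\rho^{\pi}}$ is optimal. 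In this case uniqueness is lost, because any $\Sigma_{\rho_{k}}$ with the same image under $R_{k}$ gives the same reference process.
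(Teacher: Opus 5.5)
Your proposal is correct and follows essentially the same route as the paper. Both decompose the KL into per-step conditional terms and identify each with the problem \eqref{eq:MIODCP with fixed policy}, which yields the condition $B_k\Sigma_{\rho_k}B_k^\top = B_k(P_k\Sigma_{x_k}P_k^\top+\Sigma_{\pi_k})B_k^\top$; $\rho^{\pi}$ satisfies it, and under full column rank it is equivalent to $\Sigma_{\rho_k}=\Sigma_{\rho_k^{\pi}}$ (your factorization $B_k=U_kR_k$ just makes the degenerate-Gaussian case explicit). One small correction to your closing remark: since $\mathbb{Q}^{\rho}$ depends on $\Sigma_{\rho_k}$ only through $B_k\Sigma_{\rho_k}B_k^\top$, your argument shows the optimal reference distribution is unique even without full column rank---only uniqueness of the parameter $\Sigma_{\rho_k}$ is lost.
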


\begin{proof}
    Because the second term of the objective function \eqref{eq:objective function of SBP} is independent of $\mathbb{Q}^{\rho}$, we only consider the first term $\mathcal{D}_{\mathrm{KL}}\left[\mathbb{P}^{\pi}\|\mathbb{Q}^{\rho}\right]$.
    Noting that $d\mathbb{P}_{k+1|k}^{\pi}/d\mathbb{Q}_{k+1|k}^{\rho}$ exists because they are Gaussian distributions that have the same support, we have 
    \begin{align*}
        &\mathcal{D}_{\mathrm{KL}}\left[\mathbb{P}^{\pi}\|\mathbb{Q}^{\rho}\right]\\
        =&\mathcal{D}_{\mathrm{KL}}\left[\mathbb{P}_{0}^{\pi}\|\mathbb{Q}_{0}^{\rho}\right]\\
        &+\sum_{k=0}^{T-1}\int_{\mathbb{R}^{n}}\mathcal{D}_{\mathrm{KL}}\left[\mathbb{P}_{k+1|k}^{\pi}(\cdot|x_{k})\|\mathbb{Q}_{k+1|k}^{\rho}(\cdot|x_{k})\right]d\mathbb{P}_{k}^{\pi}(x_{k})\\
        =& \sum_{k=0}^{T-1}\mathbb{E}\left[\mathcal{D}_{\mathrm{KL}}\left[\mathcal{N}\left(B_{k}P_{k}x_{k},B_{k}\Sigma_{\pi_{k}}B_{k}^{\top}\right)\|\right.\right.\\
        &\left.\left.\hspace{30pt}\mathcal{N}\left(0,B_{k}\Sigma_{\rho_{k}}B_{k}^{\top}\right)\right]\right]+\mathcal{D}_{\mathrm{KL}}\left[\mathbb{P}_{0}^{\pi}\|\mathbb{Q}_{0}^{\rho}\right].
    \end{align*}
    It hence follows that
    \begin{align*}
        &\min_{\mathbb{Q}^{\rho} \in \Pi(\mathcal{R}_{0})}\mathcal{D}_{\mathrm{KL}}\left[\mathbb{P}^{\pi}\|\mathbb{Q}^{\rho}\right] \\
        &\Leftrightarrow\min_{\Sigma_{\rho_{k}}\succ 0}\mathbb{E}\left[\mathcal{D}_{\mathrm{KL}}\left[\mathcal{N}\left(B_{k}P_{k}x_{k},B_{k}\Sigma_{\pi_{k}}B_{k}^{\top}\right)\|\right.\right. \\
        &\left.\left. \hspace{50pt}\mathcal{N}\left(0,B_{k}\Sigma_{\rho_{k}}B_{k}^{\top}\right)\right]\right], k\in \llbracket 0,T-1 \rrbracket.
    \end{align*}
    Noting that the above problem takes the same form as in \eqref{eq:MIODCP with fixed policy}, we can apply the same argument as in the proof of \cite[Theorem 2]{enami2025mutual} and the optimal covariance matrix $\Sigma_{\rho_{k}}$ of the above problem satisfies
    \begin{align}
        B_{k}\Sigma_{\rho_{k}}B_{k}^{\top} = B_{k}(P_{k}\Sigma_{x_{k}}P_{k}^{\top} + \Sigma_{\pi_{k}})B_{k}^{\top}. \label{eq:condition of optimal reference process}
    \end{align}
    Because the covariance matrix $\Sigma_{\rho_{k}^{\pi}}$ of $\rho_{k}^{\pi}$ satisfies
    \begin{align}
        \Sigma_{\rho_{k}^{\pi}} = P_{k}\Sigma_{x_{k}}P_{k}^{\top} + \Sigma_{\pi_{k}} \label{eq:covariance matrix of optimal prior for fixed policy}
    \end{align}
    from Theorem \ref{thm:optimal prior of MIODCP with policy fixed}, $\mathbb{Q}^{\rho^{\pi}}$ is an optimal solution to Problem \ref{prob:SBP} with fixed $\mathbb{P}^{\pi}$.
    In addition, if $B_{k}$ is full column rank, \eqref{eq:condition of optimal reference process} is equivalent to \eqref{eq:covariance matrix of optimal prior for fixed policy},
    which implies that the solution $\mathbb{P}^{\rho^{\pi}}$ to Problem \ref{prob:SBP} with fixed $\mathbb{P}^{\pi}$ is unique.
    Therefore, the claim of Theorem \ref{thm:optimal reference process for fixed bridge process} holds.
\end{proof}

\subsection{Alternating Optimization Algorithm for Generalized Schr\"{o}dinger Bridges} \label{subsec:Alternating Optimization Algorithm for SBPs}

On the basis of Theorems \ref{thm:optimal bridge process for fixed reference process} and \ref{thm:optimal reference process for fixed bridge process}, we propose the following alternating optimization algorithm for Problem \ref{prob:SBP} as an analogue of Algorithm \ref{alg:alternating optimization algorithm for MIODCP}.

\begin{alg}
    Initialize the prior $\rho^{(0)} \in \mathcal{R}_{0}$ and the reference process distribution $\mathbb{Q}^{(0)}:=\mathbb{Q}^{\rho^{(0)}}$ and iterate the following steps alternately.
    \begin{description}
        \item[SB-step:]\hspace{4pt}  Calculate the policy $\pi^{(i)}:=\pi^{\rho^{(i)}}$ by Theorem \ref{thm:optimal policy of MIODCP with prior fixed} and set $\mathbb{P}^{{(i)}} := \mathbb{P}^{\pi^{(i)}}$ by Theorem \ref{thm:optimal bridge process for fixed reference process}. 
        \item[RR-step:]\hspace{5pt} Calculate the prior $\rho^{(i+1)}:=\rho^{\pi^{(i)}}$ by Theorem \ref{thm:optimal prior of MIODCP with policy fixed} and set $\mathbb{Q}^{(i+1)} := \mathbb{Q}^{\rho^{(i+1)}}$ by Theorem \ref{thm:optimal reference process for fixed bridge process}.
    \end{description}
  
    \label{alg:alternating optimization algorithm for SBPs}
\end{alg}

Note that Algorithm \ref{alg:alternating optimization algorithm for SBPs} supposes that the assumptions of Theorem \ref{thm:optimal bridge process for fixed reference process} hold for any $\rho^{(i)}, i\in \mathbb{Z}_{\geq 0}$ to calculate $\mathbb{P}^{(i)}$ in SB-step.

\subsection{Nonzero Mean Marginal Constraints and Prior Class}

This subsection extends the results for Problem \ref{prob:SBP} to the case with nonzero mean marginal constraints.
Denoting by
\begin{align*}
    \Pi((\mu_{\mathrm{ini}},\Sigma_{\mathrm{ini}}),(\mu_{\mathrm{fin}},\Sigma_{\mathrm{fin}}))
\end{align*}
the set of all probability distributions on $\chi$ whose marginals at $k=0$ and $k=T$ are equal to $\mathcal{N}(\mu_{\mathrm{ini}},\Sigma_{\mathrm{ini}})$ and $\mathcal{N}(\mu_{\mathrm{fin}},\Sigma_{\mathrm{fin}})$, respectively, we extend Problem \ref{prob:SBP} to the following problem.

\begin{prob}
     Find a pair of a controlled process distribution $\mathbb{P} \in  \Pi((\mu_{\mathrm{ini}},\Sigma_{\mathrm{ini}}),(\mu_{\mathrm{fin}},\Sigma_{\mathrm{fin}}))$ and a reference process distribution $\mathbb{Q}^{\rho} \in \Pi(\mathcal{R})$ that solves
     \begin{align}
         \min_{\mathbb{P},\mathbb{Q}^{\rho}} &\mathcal{D}_{\mathrm{KL}}\left[\mathbb{P}\|\mathbb{Q}^{\rho}\right] + \mathbb{E}_{\mathbb{P}}[V(x_{0},\ldots,x_{T})].\label{eq:objective function of SBP with nonzero mean}
     \end{align}
   
     \label{prob:SBP with nonzero mean}
\end{prob}

Note that the reference process distribution class is also extended from $\Pi(\mathcal{R}_{0})$  to $\Pi(\mathcal{R})$.
In addition, the controlled process of Problem \ref{prob:SBP with nonzero mean} can be assumed to be Markov because Proposition \ref{prop:Markov solution of SBP} also holds for Problem \ref{prob:SBP with nonzero mean}.
In what follows, following the same rationale used in MI optimal density control to extend Algorithm \ref{alg:alternating optimization algorithm for MIODCP} to Algorithm \ref{alg:alternating optimization algorithm for MIODCP in general case}, we propose an alternating optimization algorithm for Problem \ref{prob:SBP with nonzero mean} by extending Algorithm \ref{alg:alternating optimization algorithm for SBPs}.

Recall the notation $\bar{u}_{k}^{*}$ defined in \eqref{eq:optimal deterministic input} and the solution $\mu_{x_{k}}^{*}$ of \eqref{eq:evolution of mean of state of mean steering LQR problem} with $\bar{u}_{k}=\bar{u}_{k}^{*}$.
As we can fix the means of prior $\rho \in \mathcal{R}$ of Problem \ref{prob:MIODCP in general case} as $\mu_{\rho_{k}}=\bar{u}_{k}^{*}, k \in \llbracket0,T-1 \rrbracket$ from Proposition \ref{prop:optimal mean of prior is zero}, we can also do so in Problem \ref{prob:SBP with nonzero mean} from the following proposition.
For the proof, see Appendix \ref{app:Proof of Proposition of zero mean optimal reference process}.

\begin{prop}
    Assume that $B_{k}$ is full column rank for any $k \in \llbracket0,T-1 \rrbracket$ and $G_{r}(T,0)$ calculated under $\bar{B}_{k}=B_{k}$ is invertible.
    In addition, assume that there exists an optimal solution $(\mathbb{P}^{*},\mathbb{Q}^{\rho^{*}}),\rho_{k}^{*}=\mathcal{N}(\mu_{\rho_{k}^{*}},\Sigma_{\rho_{k}^{*}})$ of Problem \ref{prob:SBP with nonzero mean}.
    Then, any optimal solution $(\mathbb{P}^{*},\mathbb{Q}^{\rho^{*}})$ satisfies that $\mathbb{E}_{\mathbb{P^{*}}}[x_{k}]=\mu_{x_{k}}^{*},k\in \llbracket 0,T \rrbracket$ and $\mu_{\rho_{k}^{*}}=\bar{u}_{k}^{*},k\in \llbracket 0,T-1 \rrbracket$.
    \label{prop:optimal mean of reference process is zero}
\end{prop}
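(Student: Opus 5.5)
We split the objective of Problem \ref{prob:SBP with nonzero mean} into a mean part and a deviation part, and show that the mean part is a strictly convex function of the mean trajectory alone. Since Proposition \ref{prop:Markov solution of SBP} also holds for Problem \ref{prob:SBP with nonzero mean}, we may take $\mathbb{P}^{*}$ to be Markov. Fix an optimal pair $(\mathbb{P}^{*},\mathbb{Q}^{\rho^{*}})$. For this pair define $m_{k}:=\mathbb{E}_{\mathbb{P}^{*}}[x_{k}]$, so that $m_{0}=\mu_{\mathrm{ini}}$ and $m_{T}=\mu_{\mathrm{fin}}$.

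\textbf{Step 1 (interpret the mean increments).} Because $B_{k}$ has full column rank and $\mathbb{Q}^{\rho^{*}}_{k+1|k}(\cdot|x)=\mathcal{N}(A_{k}x+B_{k}\mu_{\rho_{k}^{*}},B_{k}\Sigma_{\rho_{k}^{*}}B_{k}^{\top})$ is supported on $A_{k}x+\mathrm{Im}(B_{k})$, finiteness of the KL term forces $x_{k+1}-A_{k}x_{k}\in\mathrm{Im}(B_{k})$ $\mathbb{P}^{*}$-a.s. Hence we can write $x_{k+1}=A_{k}x_{k}+B_{k}u_{k}$ with $u_{k}:=B_{k}^{\dagger}(x_{k+1}-A_{k}x_{k})$. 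This gives $V_{k}=\frac{1}{2}\|u_{k}\|^{2}$, and the mean input $\bar u_{k}:=\mathbb{E}[u_{k}]$ satisfies $m_{k+1}=A_{k}m_{k}+B_{k}\bar u_{k}$.

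\textbf{Step 2 (mean/deviation decomposition).} We mimic the decomposition leading to Problem \ref{prob:decomposed MIODCP}. Write $\check x_{k}=x_{k}-m_{k}$ and $\check u_{k}=u_{k}-\bar u_{k}$, and translate $\mathbb{P}^{*}$ by $-m$ to obtain $\check{\mathbb{P}}$. Translate $\mathbb{Q}^{\rho}$ by the trajectory generated from $\mu_{\rho_{k}}$ to obtain $\check{\mathbb{Q}}$, a reference process with zero-mean noise $\mathcal{N}(0,\Sigma_{\rho_{k}})$ and a shifted initial law. Using the chain rule for KL as in Proposition \ref{prop:Markov solution of SBP}, the transition KL at step $k$ equals the KL between the centered conditionals plus $\frac{1}{2}\mathbb{E}\|u_{k}-\mu_{\rho_{k}}\|^{2}_{\Sigma_{\rho_{k}}^{-1}}$-type terms. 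Expanding this Gaussian KL with the means explicit yields
\[
\mathcal{D}_{\mathrm{KL}}[\mathbb{P}\|\mathbb{Q}^{\rho}]+\mathbb{E}_{\mathbb{P}}[V]=\Big(\mathcal{D}_{\mathrm{KL}}[\check{\mathbb{P}}\|\check{\mathbb{Q}}]+\mathbb{E}\textstyle\sum_{k}\tfrac12\|\check u_{k}\|^{2}\Big)+\textstyle\sum_{k}\Big(\tfrac12\|\bar u_{k}\|^{2}+\tfrac12\|\bar u_{k}-\mu_{\rho_{k}}\|^{2}_{\Sigma_{\rho_{k}}^{-1}}\Big)+c .
\]
Here $c$ depends only on the initial law, as in Remark \ref{rem:validity of difference between initial distributions}. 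The first bracket depends only on the centered objects and on $\Sigma_{\rho}$, not on $(\bar u,\mu_{\rho})$.

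\textbf{Step 3 (minimize the mean part).} For fixed centered objects, minimizing over $\mu_{\rho_{k}}$ forces $\mu_{\rho_{k}}=\bar u_{k}$, which makes the penalty vanish. The remaining term $\sum_{k}\frac12\|\bar u_{k}\|^{2}$ under $m_{k+1}=A_{k}m_{k}+B_{k}\bar u_{k}$, $m_{0}=\mu_{\mathrm{ini}}$, $m_{T}=\mu_{\mathrm{fin}}$ is exactly the minimum-energy problem. By invertibility of $G_{r}(T,0)$ it has the unique minimizer $\bar u^{*}$ in \eqref{eq:optimal deterministic input}.

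The optimality argument runs by contradiction. Suppose $(m,\mu_{\rho^{*}})\neq(\mu_{x}^{*},\bar u^{*})$ at the optimum. Re-shift the centered process by the trajectory $\mu_{x}^{*}$ and set $\mu_{\rho_{k}}=\bar u_{k}^{*}$, keeping $\Sigma_{\rho^{*}}$. The resulting pair still lies in $\Pi((\mu_{\mathrm{ini}},\Sigma_{\mathrm{ini}}),(\mu_{\mathrm{fin}},\Sigma_{\mathrm{fin}}))\times\Pi(\mathcal{R})$ and leaves the first bracket unchanged. It strictly lowers the mean part, which contradicts optimality. Since $B_{k}$ is injective, $\bar u_{k}$ is determined by $m$, so $m=\mu_{x}^{*}$ yields $\bar u=\bar u^{*}$ and then $\mu_{\rho^{*}_{k}}=\bar u_{k}^{*}$.

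\textbf{Main obstacle.} The delicate step is Step 2: verifying that the KL between the transition kernels splits exactly into a centered part plus a mean-dependent part. In particular, the cross terms $\mathbb{E}[\check u_{k}]$-weighted must vanish, and this has to hold without assuming that $\mathbb{P}^{*}$ is Gaussian. I would handle it by noting that translating both measures by deterministic paths is a bijection, so by the data processing argument of Remark \ref{rem:identical cost coefficients} it preserves KL. The mean-shift term then appears only through $\log d\mathbb{Q}/d\check{\mathbb{Q}}$, which is affine-quadratic in $u_{k}$, and its $\mathbb{P}$-expectation involves only first and second moments.
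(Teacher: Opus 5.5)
Your argument is correct, and it takes a genuinely different route from the paper's proof.

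The paper absorbs $V$ into a tilted Gaussian reference $\hat{\mathbb{Q}}^{\rho}$, with drift $B_{k}(\Sigma_{\rho_{k}}+I)^{-1}\mu_{\rho_{k}}$ and noise covariance $B_{k}(I+\Sigma_{\rho_{k}}^{-1})^{-1}B_{k}^{\top}$. It then splits $\mathcal{D}_{\mathrm{KL}}[\mathbb{P}\|\hat{\mathbb{Q}}^{\rho}]$ on path space into a centered KL plus $\frac{1}{2}\|\mathbf{y}-\mathbf{z}\|^{2}_{\Sigma_{\hat{\mathbb{Q}}^{\rho}}^{\dagger}}$. The mean path $\mathbf{y}$ is eliminated with a Lagrange multiplier for the boundary constraints. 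What remains is an LQR in $\mu_{\rho}$ with a soft terminal cost weighted by $G_{r}^{\rho}(T,0)^{-1}$. That this LQR returns exactly $\bar{u}^{*}$ rests on an identity the paper leaves implicit: $G_{r}^{\rho}(T,0)+\sum_{k}\Phi(T,k+1)B_{k}(\Sigma_{\rho_{k}}+I)^{-1}B_{k}^{\top}\Phi(T,k+1)^{\top}=G_{r}(T,0)$.

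You instead work step by step in the input coordinates $u_{k}=B_{k}^{\dagger}(x_{k+1}-A_{k}x_{k})$. There the reference transition is simply $\mathcal{N}(\mu_{\rho_{k}},\Sigma_{\rho_{k}})$, so the mean part comes out already decoupled as $\frac{1}{2}\|\bar{u}_{k}\|^{2}+\frac{1}{2}\|\bar{u}_{k}-\mu_{\rho_{k}}\|^{2}_{\Sigma_{\rho_{k}}^{-1}}$. Setting $\mu_{\rho_{k}}=\bar{u}_{k}$ and solving the minimum-energy problem finishes the argument. The strict inequality needed for the ``any optimal solution'' part is immediate.

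What each route buys:
\begin{itemize}
\item Your route is shorter and avoids path-space pseudo-inverses and the Gramian identity.
\item It mirrors the decomposition behind Problem~\ref{prob:decomposed MIODCP}, so it also gives at once the reduction to Problem~\ref{prob:SBP} that the paper carries out after the proposition.
\item The paper's route additionally yields the explicit mean path \eqref{eq:optimal y} of $\mathbb{P}$ for an arbitrary, non-optimal $\mu_{\rho}$.
\end{itemize}

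Two points of care when you write up Step 2:
\begin{itemize}
\item Use the version from your last paragraph. Translating $\mathbb{P}$ by $m$ and $\mathbb{Q}^{\rho}$ by a different path does not by itself preserve KL. Translate both by $m$, then compare the shifted reference with $\check{\mathbb{Q}}$. Since their covariances agree, the log-density ratio is affine (not quadratic) in $\check{u}_{k}$. Its $\check{\mathbb{P}}$-expectation is $\frac{1}{2}\|\bar{u}_{k}-\mu_{\rho_{k}}\|^{2}_{\Sigma_{\rho_{k}}^{-1}}$, using only the unconditional identity $\mathbb{E}[\check{u}_{k}]=0$, so you never need $\mathbb{P}^{*}$ to be Gaussian.
\item Start the $\mu_{\rho}$-generated path at a fixed point, e.g.\ $\mu_{\mathrm{ini}}$, so that $c$ is genuinely independent of $\mu_{\rho}$.
\end{itemize}
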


On the basis of Proposition \ref{prop:optimal mean of reference process is zero}, we only consider the priors $\rho \in \mathcal{R},\rho_{k}=\mathcal{N}(\bar{u}_{k}^{*},\Sigma_{\rho_{k}})$ henceforth.
Now, we reduce Problem \ref{prob:SBP with nonzero mean} to Problem \ref{prob:SBP}.
To this end, let us change the variable from $x_{k}$ to $\check{x}_{k}=x_{k}-\mu_{x_{k}}^{*}$, where $\mu_{x_{k}}^{*}$ is the solution to \eqref{eq:evolution of mean of state of mean steering LQR problem} and \eqref{eq:boundary conditions of mean steering LQR problem} with $\bar{u}_{k}=\bar{u}_{k}^{*}$.
Under this variable change, define the probability distribution of $\check{x}_{0},\ldots,\check{x}_{T}$ as $\check{\mathbb{P}}$, which satisfies $\check{\mathbb{P}}(\check{x}_{0},\ldots,\check{x}_{T})=\mathbb{P}(\check{x}_{0}+\mu_{x_{0}}^{*},\ldots,\check{x}_{T}+\mu_{x_{T}}^{*})$.
In addition, the process \eqref{eq:reference process} of $x_{k}$ under $\rho \in \mathcal{R}, \rho_{k}=\mathcal{N}(\bar{u}_{k}^{*}, \Sigma_{\rho_{k}})$ corresponds to the following process of $\check{x}_{k}$ associated with $\check{\rho} \in \mathcal{R}_{0}, \check{\rho}_{k}=\mathcal{N}(0,\Sigma_{\rho_{k}})$.
\begin{align}
    &\check{x}_{k+1}=A_{k}\check{x}_{k}+B_{k}w_{k}^{\check{\rho}}, w_{k}^{\check{\rho}}\sim \check{\rho}_{k},\nonumber \\
    &\check{x}_{0}\sim \mathcal{N}(\mu_{\mathrm{ref}}-\mu_{\mathrm{ini}},\Sigma_{\mathrm{ref}}). \label{eq:reference process with changed coordinate}
\end{align}
Denote by $\check{\mathbb{Q}}^{\check{\rho}}$ the probability distribution of the above process.
Because $\check{\mathbb{Q}}^{\check{\rho}}(\check{x}_{0},\ldots,\check{x}_{T})=\mathbb{Q}^{\rho}(\check{x}_{0}+\mu_{x_{0}}^{*},\ldots,\check{x}_{T}+\mu_{x_{T}}^{*})$ holds, the first term in \eqref{eq:objective function of SBP with nonzero mean} can be rewritten as
\begin{align*}
    \mathcal{D}_{\mathrm{KL}}\left[\mathbb{P}\|\mathbb{Q}^{\rho}\right]=\mathcal{D}_{\mathrm{KL}}\left[\check{\mathbb{P}}\|\check{\mathbb{Q}}^{\check{\rho}}\right]
\end{align*}
from the bijectivity of the variable change $\check{x}_{k}=x_{k}-\mu_{x_{k}}^{*}$ and the data processing inequality \cite{cover1999elements}.
In addition, with full column rank $B_{k}$, the second term in \eqref{eq:objective function of SBP with nonzero mean} can be rewritten as
\begin{align*}
    \mathbb{E}_{\mathbb{P}}[V(x_{0},\ldots,x_{T})]
    =\mathbb{E}_{\check{\mathbb{P}}}[V(\check{x}_{0},\ldots,\check{x}_{T})]+ \sum_{k=0}^{T-1}\frac{1}{2}\|\bar{u}_{k}^{*}\|^{2}.
\end{align*}
since $\mathbb{E}_{\check{\mathbb{P}}}[\check{x}_{k+1}-A_{k}\check{x}_{k}]=0$ from $\mathbb{E}_{\mathbb{P}}[x_{k}]=\mu_{x_{k}}^{*}$.
Furthermore, $\check{\mathbb{P}} \in \Pi(\Sigma_{\mathrm{ini}},\Sigma_{\mathrm{fin}})$ if $\mathbb{P}\in \Pi((\mu_{\mathrm{ini}},\Sigma_{\mathrm{ini}}),(\mu_{\mathrm{fin}},\Sigma_{\mathrm{fin}}))$.
Therefore, Problem \ref{prob:SBP with nonzero mean} can be reduced to the following problem.
\begin{align*}
     \min_{\check{\mathbb{P}} ,\check{\mathbb{Q}}^{\check{\rho}}} &\mathcal{D}_{\mathrm{KL}}\left[\check{\mathbb{P}}\|\check{\mathbb{Q}}^{\check{\rho}}\right] + \mathbb{E}_{\check{\mathbb{P}}}[V(x_{0},\ldots,x_{T})]\nonumber\\
     &+ (\text{Terms independent of }\check{\mathbb{P}}\text{ and }\check{\mathbb{Q}}^{\check{\rho}})\\
     \text{s.t.}\hspace{3pt}& \check{\mathbb{P}} \in \Pi(\Sigma_{\mathrm{ini}},\Sigma_{\mathrm{fin}}),\check{\mathbb{Q}}^{\check{\rho}}\in \Pi(\mathcal{R}_{0}),
\end{align*}
which takes the same form as Problem \ref{prob:SBP}.

Using these results, we extend Algorithm \ref{alg:alternating optimization algorithm for SBPs} to Problem \ref{prob:SBP with nonzero mean} as follows:

\begin{alg}
    Calculate $\bar{u}^{*}$ and $\{\mu_{x_{k}}^{*}\}_{k=0}^{T}$.
    Initialize the prior $\check{\rho}^{(0)} \in \mathcal{R}_{0}$ and the reference process distribution $\mathbb{Q}^{(0)}:=\mathbb{Q}^{\rho^{(0)}}, \rho^{(0)}_{k}(u)=\check{\rho}_{k}^{(0)}(u-\bar{u}_{k}^{*})$.
    Iterate the following steps alternately.
    \begin{description}
        \item[SB-step:]\hspace{8pt}Calculate the policy $\check{\pi}^{(i)} := \pi^{\check{\rho}^{(i)}}$ by Theorem \ref{thm:optimal policy of MIODCP with prior fixed} and set $\mathbb{P}^{(i)}:=\mathbb{P}^{\pi^{(i)}}, \pi_{k}^{(i)}(u|x)=\check{\pi}_{k}^{(i)}(u-\bar{u}_{k}^{*}|x-\mu_{x_{k}}^{*})$. 
        \item[RR-step:]\hspace{9pt}Calculate the prior $\check{\rho}^{(i+1)} := \rho^{\check{\pi}^{(i)}}$ by Theorem \ref{thm:optimal prior of MIODCP with policy fixed} and set $\mathbb{Q}^{(i+1)}:=\mathbb{Q}^{\rho^{(i+1)}}, \rho_{k}^{(i+1)}(u)=\check{\rho}_{k}^{(i+1)}(u-\bar{u}_{k}^{*})$.     
    \end{description}
  
    \label{alg:alternating optimization algorithm for SBPs with nonzero mean}
\end{alg}

\section{Numerical Experiment}\label{sec:Numerical Experiment}

In this section, we compare Algorithm \ref{alg:alternating optimization algorithm for SBPs} with a Schr\"{o}dinger bridge--based linear system identification method (SBID) \cite{morimoto2025linear}.
We start by reviewing SBID and then conduct the experiment.

\subsection{Review of SBID}

In \cite{morimoto2025linear}, SBID is originally proposed to estimate an unknown time-invariant system matrix $A \in \mathbb{R}^{n\times n}$ and an unknown time-invariant noise covariance matrix $\Theta\succ 0$ of a linear system.
In this experiment, we assume that the system matrix is known and focus on the estimation of the noise covariance matrix to compare SBID with Algorithm \ref{alg:alternating optimization algorithm for SBPs with nonzero mean}.
SBID estimates an unknown noise covariance matrix $\Theta\succ 0$ from snapshot data $\mathcal{N}(\mu_{\mathrm{ini}},\Sigma_{\mathrm{ini}})$ and $\mathcal{N}(\mu_{\mathrm{ini}},\Sigma_{\mathrm{ini}})$ by solving the following Schr\"{o}dinger bridge problem.

\begin{prob}
     \begin{align*}
         \min_{\mathbb{P},\Theta} \mathcal{D}_{\mathrm{KL}}\left[\mathbb{P}\|\mathbb{Q}^{\Theta}\right]\ \text{s.t.}\ \mathbb{P}\in \Pi((\mu_{\mathrm{ini}},\Sigma_{\mathrm{ini}}),(\mu_{\mathrm{fin}},\Sigma_{\mathrm{fin}})),
     \end{align*}
     where $B_{k}$ is full column rank,
     \begin{align}
            \{x_{k}\}_{k=0}^{T} \sim \mathbb{Q}^{\Theta}\Leftrightarrow &x_{k+1}=A_{k}x_{k}+B_{k}\tilde{w}_{k},\nonumber\\
         &\tilde{w}_{k}\sim \mathcal{N}(\bar{u}_{k}^{\Theta},\Theta), x_{0}\sim \mathcal{N}(\mu_{\mathrm{ref}},\Sigma_{\mathrm{ref}}),\label{eq:reference process in SBID}
     \end{align}
     $\bar{u}_{k}^{\Theta}$ in \eqref{eq:reference process in SBID} is calculated by \eqref{eq:optimal deterministic input} with input matrix $B_{k}\Theta^{\frac{1}{2}}$ instead of $B_{k}$.
   
     \label{prob:SBID}
\end{prob}

Differences between Algorithm \ref{alg:alternating optimization algorithm for SBPs with nonzero mean} and SBID are summarized in Table \ref{table:deffirences between ID methods}.
The main difference is whether the regularization term $\mathbb{E_{\mathbb{P}}}[V(x_{0},\ldots,x_{T})]$ is included or not.
Similar to Algorithm \ref{alg:alternating optimization algorithm for SBPs with nonzero mean}, SBID infers the controlled process $\mathbb{P}$ and estimates the noise covariance matrix $\Theta$ alternately.
Although \cite{morimoto2025linear} only considers the time-invariant unknown $\Theta$, we can formally extend SBID to the case with time-varying unknown $\Theta_{k}$.
We call this extended version of SBID the Schr\"{o}dinger bridge--based time-varying linear system identification method (SBTVID).

\begin{table}[h]
  \caption{Differences between noise covariance matrix estimation methods via Schr\"{o}dinger bridges with reference refinement.}
  \label{table:deffirences between ID methods}
  \centering
  \small
  \renewcommand{\arraystretch}{1.2}
  \scalebox{0.9}{
  \begin{tabular}{|c|c|c|} \hline 
    Method & Regularization term &  Noise covariance matrix\\ \hline\hline
     Algorithm \ref{alg:alternating optimization algorithm for SBPs with nonzero mean}  & Included & Time-varying \\ \hline
   SBID & Not included & Time-invariant \\ \hline
   SBTVID & Not included & Time-varying \\ \hline
  \end{tabular}
  }
\end{table}

\subsection{Comparison of Estimation Accuracy}

In this subsection, we compare Algorithm \ref{alg:alternating optimization algorithm for SBPs with nonzero mean} with SBID and SBTVID.
The terminal time is given by $T=10$.
The true reference system is given by
\begin{align*}
    &x_{k+1}=Ax_{k}+w_{k}^{\mathrm{true}},w_{k}^{\mathrm{true}}\sim \mathcal{N}(0,\Sigma_{w_{k}^{\mathrm{true}}}), \\
    &\Sigma_{w_{k}^{\mathrm{true}}}=\alpha\left(\frac{T-1-k}{T-1}\cdot\frac{1}{10}I + \frac{k}{T-1}I\right),
\end{align*}
where $A=0.8I + 0.3\Upsilon$, where $\Upsilon$ is a random matrix with elements uniformly drawn from $[-0.5,0.5]$.
We will perform 10 trials, using a different matrix $A$ for each trial.
In each trial, we generate 100 particles as samples at time $k=0$ from $\mathcal{N}(0,I)$.
We calculate the numerical time evolutions of the 100 particles by the true reference process.
By fitting Gaussian distributions to the 100 samples at time $k=0,T$ with maximum likelihood estimation, we obtain the snapshot data $\mathcal{N}(\mu_{\mathrm{ini}},\Sigma_{\mathrm{ini}})$ and $\mathcal{N}(\mu_{\mathrm{fin}},\Sigma_{\mathrm{fin}})$, respectively.
We estimate $\Sigma_{w_{k}^{\mathrm{true}}}$ from the snapshot data by Algorithm \ref{alg:alternating optimization algorithm for SBPs with nonzero mean}, SBID, and SBTVID.
In algorithm \ref{alg:alternating optimization algorithm for SBPs with nonzero mean}, the initialized prior $\check{\rho}^{(0)},\check{\rho}_{k}^{(0)}(\cdot)=\mathcal{N}(0, \Sigma_{\check{\rho}_{k}^{(0)}})$ is given by $ \Sigma_{\check{\rho}_{k}^{(0)}}=I,k \in \llbracket 0,T-1 \rrbracket$.
The estimates of covariance matrices in SBID and SBTVID are initialized as $\Theta=I$ and $\Theta_{k}=I\ \forall k \in \llbracket0,T-1 \rrbracket$, respectively.
We conduct 10 iterations of alternating optimization in these three methods.

\begin{figure}[htbp]
    \begin{center}
    \begin{tabular}{c}   
      \begin{minipage}[t]{0.5\hsize}
      \centerline{\includegraphics[width=80mm]{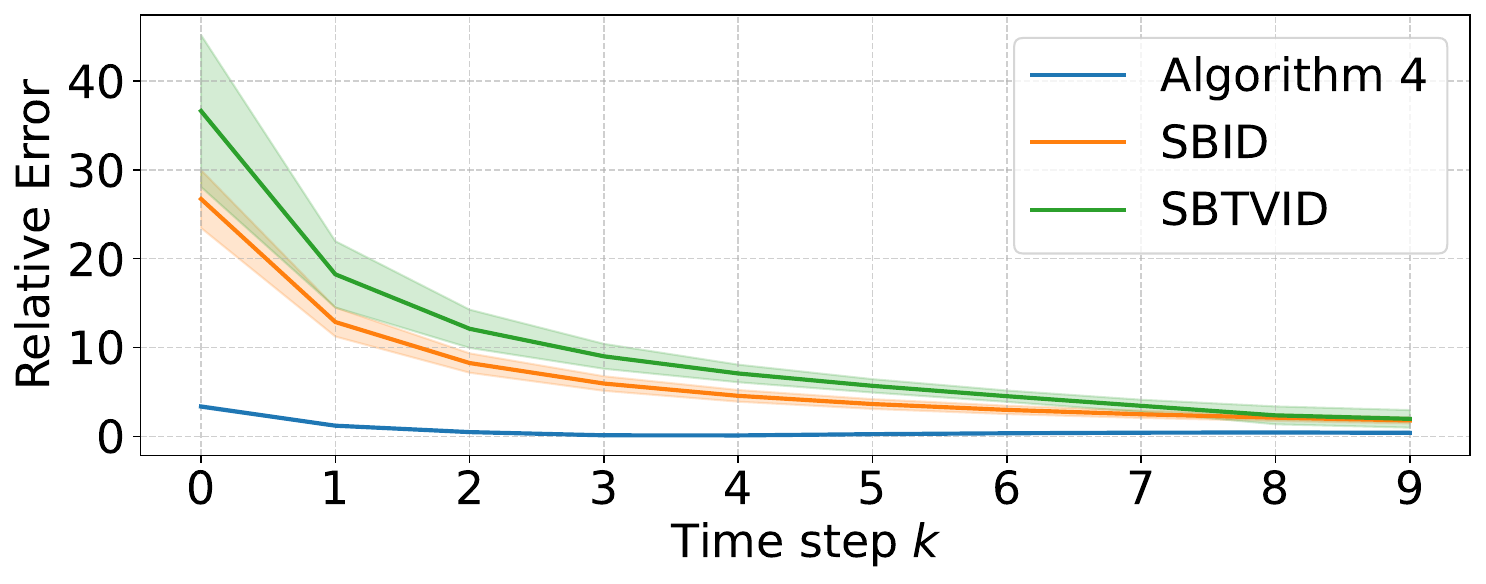}}
        \subcaption{$\alpha = 0.2$}
        \label{fig:relative error alpha 0.2}
      \end{minipage}\\
      
      \begin{minipage}[t]{0.5\hsize}
        \centerline{\includegraphics[width=80mm]{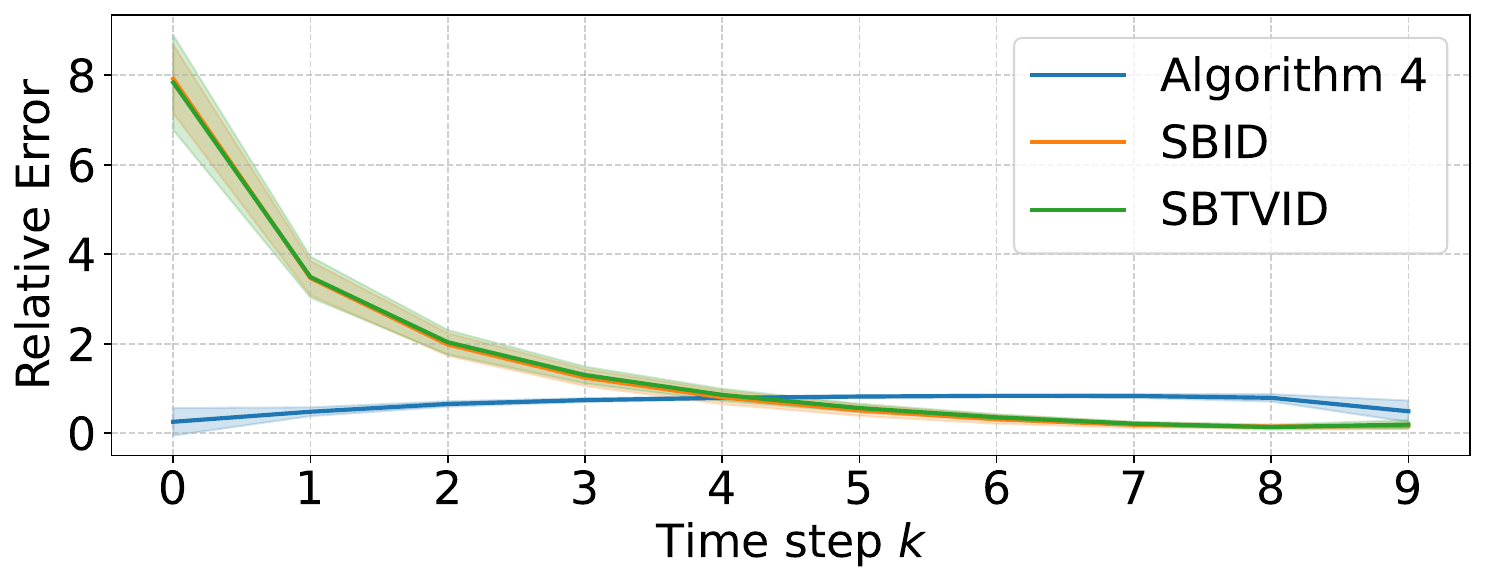}}
        \subcaption{$\alpha = 1$}
        \label{fig:relative error alpha 1}
      \end{minipage}\\

      \begin{minipage}[t]{0.5\hsize}
      \centerline{\includegraphics[width=80mm]{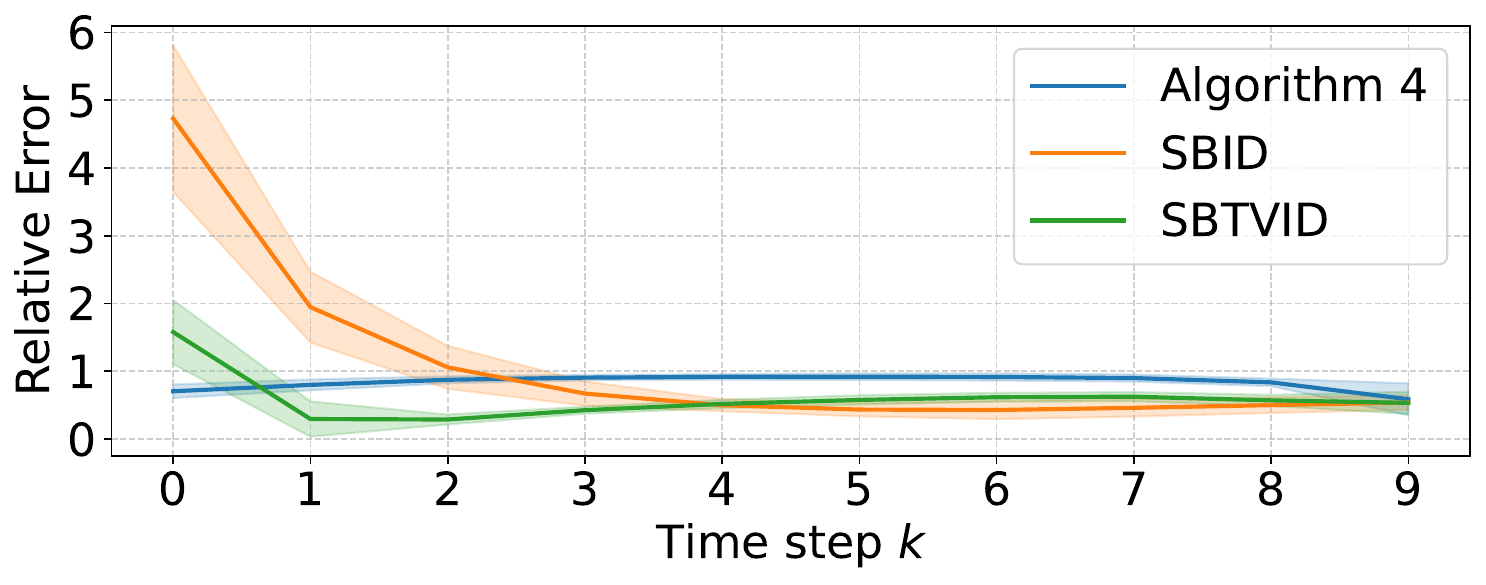}}
        \subcaption{$\alpha = 5$}
        \label{fig:relative error alpha 5}
      \end{minipage}
      
    \end{tabular}
    \caption{The relative errors of the estimated noise covariance matrices at each time are plotted for difference $\alpha$.
    Bold lines and error bars indicate the mean and the standard deviation over the 10 trials, respectively.}
    \label{fig:relative error plot}
    \end{center}
\end{figure}

Fig. \ref{fig:relative error plot} plots the mean and standard deviation over the 10 trials of the relative errors between the true noise covariance matrix and the matrices estimated by Algorithm \ref{alg:alternating optimization algorithm for SBPs with nonzero mean}, SBID, and SBTVID at each time step $k$: $\|\Sigma_{\rho_{k}} - \Sigma_{w_{k}^{\mathrm{true}}}\| / \|\Sigma_{w_{k}^{\mathrm{true}}}\|$, $\|\Theta - \Sigma_{w_{k}^{\mathrm{true}}}\| / \|\Sigma_{w_{k}^{\mathrm{true}}}\|$, and $\|\Theta_{k} - \Sigma_{w_{k}^{\mathrm{true}}}\| / \|\Sigma_{w_{k}^{\mathrm{true}}}\|$.
Despite the difference in their estimation accuracy, both SBID and SBTVID show improved accuracy as the magnitude $\alpha$ of the true noise $w_{k}^{\mathrm{true}}$ increases and at later time steps.
In contrast, the estimation accuracy of Algorithm \ref{alg:alternating optimization algorithm for SBPs with nonzero mean} is not significantly affected by the noise magnitude $\alpha$ or the time step $k$.
Since the difference between Algorithm \ref{alg:alternating optimization algorithm for SBPs with nonzero mean} and SBTVID in this experimental setting is the inclusion of the regularization term $\mathbb{E}_{\mathbb{P}}[V(x_{0},\ldots,x_{T})]$, this term appears to provide stable estimation accuracy irrespective of time steps and the magnitude of the true noise.
The effect of the regularization term is particularly evident when the noise magnitude is small.

\section{Conclusion}\label{sec:Conclusion}
In this paper, we first investigated the MI optimal density control problem for discrete-time linear systems with Gaussian boundary conditions and a Gaussian prior class.
For this problem, we proposed the alternating optimization algorithm.
We also introduced the generalized Schr\"{o}dinger bridge problem with reference refinement as a method to estimate the noise covariance matrices of discrete-time linear systems only from snapshot data.
Via the derivation of the alternating optimization algorithm of the later problem, we revealed the equivalence between these two problems.
The numerical experiment showed that the proposed method to estimate noise covariance matrices achieves stable estimation accuracy irrespective of time steps and the magnitude of the true noise, compared to existing estimate methods.
The theoretical mechanism behind why the proposed method works well as above remains unclear, and analyzing this is left for future work.
Another future work involves MI optimal density control in more general cases such as state-dependent cost and marginal constraints of Gaussian mixture models.

\begin{ack}                               
This work was supported by JSPS KAKENHI Grant Number 21H04875.  
\end{ack}

\appendix
\section*{Appendix}

\section{Proof of Proposition \ref{prop:optimal mean of prior is zero}}
\label{app:Proof of Proposition of zero mean optimal prior}

The optimal policy to Problem \ref{prob:MIOCP with fixed prior} with a fixed nonzero mean prior $\rho,\rho_{k}=\mathcal{N}(\mu_{\rho_{k}},\Sigma_{\rho_{k}})$ (referred to as Problem \ref{prob:MIOCP with fixed prior}$^{\prime}$) is given as follows.
Note that we use the same notations as in Proposition \ref{prop:optimal policy of MIOCP with fixed prior}.
\begin{lem}[{\cite[Theorem 1]{enami2025mutual}}]
    Assume that the solution $\Gamma_{k}$ to \eqref{eq:Riccati difference equation of MIOCP} and \eqref{eq:terminal condition of Riccati difference equation of MIOCP} satisfies $\Sigma_{\rho_{k}}^{-1}+ I + B_{k}^{\top}\Gamma_{k+1}B_{k} \succ 0$ for any $k \in \llbracket 0, T-1 \rrbracket$.
    In addition, assume that $F$ and $A_{k},k\in \llbracket 0,T-1 \rrbracket$ are invertible.
    Then, $\Gamma_{k}$ is invertible for any $k \in \llbracket0,T\rrbracket$ and the unique optimal policy $\hat{\pi}^{\rho}$ of Problem \ref{prob:MIOCP with fixed prior}$^{\prime}$ is given by
    \begin{align}
        \hat{\pi}_{k}^{\rho}(\cdot|x) = \mathcal{N}(\mu_{\hat{\pi}_{k}^{\rho}}, \Sigma_{\hat{\pi}_{k}^{\rho}}),  k \in \llbracket 0,T-1 \rrbracket,\label{eq:optimal policy of MIOCP for fixed general prior}
    \end{align}
    where
    \begin{align}
        r_{k} = & A_{k}^{-1} r_{k+1} -  \Gamma_{k}^{-1} A_{k}^{\top} \Gamma_{k+1} B_{k}  \nonumber \\
        &\times\left\{I + \Sigma_{\rho_{k}}(I + B_{k}^{\top}\Gamma_{k+1}B_{k})\right\}^{-1}\mu_{\rho_{k}},\label{eq:residual term of MIOCP with general prior fixed}\\
        r_{T} = & 0, \label{eq:residual term for k=T of MEOCP with general prior fixed} \\
        \Sigma_{\hat{\pi}_{k}^{\rho}}:=&  ( \Sigma_{\rho_{k}}^{-1}+ I + B_{k}^{\top}\Gamma_{k+1}B_{k})^{-1},\label{eq:covariance matrix of optimal policy of MIOCP with general prior fixed}\\
        \mu_{\hat{\pi}_{k}^{\rho}} :=& ( I + \Sigma_{\rho_{k}}(I + B_{k}^{\top}\Gamma_{k+1}B_{k}))^{-1}\mu_{\rho_{k}}\nonumber \\
        &-\Sigma_{\hat{\pi}_{k}^{\rho}}B_{k}^{\top}\Gamma_{k+1}(A_{k}x-r_{k+1}).\label{eq:mean of optimal policy of MIOCP with general prior fixed}
    \end{align}
    \label{lem:optimal policy of MIOCP with general prior fixed}
\end{lem}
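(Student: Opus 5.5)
The statement is Lemma \ref{lem:optimal policy of MIOCP with general prior fixed}: the unique optimal policy of Problem \ref{prob:MIOCP with fixed prior}$^{\prime}$, where the prior has nonzero mean. I would prove it by dynamic programming with a value function that is quadratic with a shifted center. Specifically, I would verify by backward induction that
\begin{align*}
V_k(x)=\tfrac12\|x-r_k\|_{\Gamma_k}^2+c_k
\end{align*}
for some constants $c_k$. The base case is $V_T(x)=\frac12\|x\|_F^2$, which holds since $\Gamma_T=F$ and $r_T=0$.

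\textbf{Inductive step.} At stage $k$ I must minimize, over densities $\pi_k(\cdot|x)$,
\begin{align*}
\mathbb{E}_{u\sim\pi_k}\Bigl[\tfrac12\|u\|^2+\tfrac12\|A_kx+B_ku-r_{k+1}\|_{\Gamma_{k+1}}^2\Bigr]+\mathcal{D}_{\mathrm{KL}}[\pi_k\|\rho_k].
\end{align*}
By the Gibbs variational principle, the unique minimizer is
\begin{align*}
\pi_k(u|x)\propto\rho_k(u)\exp\bigl(-\tfrac12\|u\|^2-\tfrac12\|A_kx+B_ku-r_{k+1}\|_{\Gamma_{k+1}}^2\bigr),
\end{align*}
and the minimum value is $-\log$ of the normalizer. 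The exponent is quadratic in $u$ with Hessian $\Sigma_{\rho_k}^{-1}+I+B_k^\top\Gamma_{k+1}B_k$, which is $\succ0$ by assumption. So the normalizer is finite and the minimizer is Gaussian, with covariance \eqref{eq:covariance matrix of optimal policy of MIOCP with general prior fixed}.

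\textbf{Mean of the policy.} Completing the square gives the mean
\begin{align*}
\Sigma_{\hat\pi_k^\rho}\bigl(\Sigma_{\rho_k}^{-1}\mu_{\rho_k}-B_k^\top\Gamma_{k+1}(A_kx-r_{k+1})\bigr).
\end{align*}
The identity $\Sigma_{\hat\pi}\Sigma_\rho^{-1}=(I+\Sigma_\rho(I+B^\top\Gamma B))^{-1}$ turns this into \eqref{eq:mean of optimal policy of MIOCP with general prior fixed}.

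\textbf{Updating the value function.} Evaluating $-\log$ of the normalizer gives a quadratic in $x$. Its Hessian is
\begin{align*}
A_k^\top\Gamma_{k+1}A_k-A_k^\top\Gamma_{k+1}B_k\Sigma_{\hat\pi_k^\rho}B_k^\top\Gamma_{k+1}A_k,
\end{align*}
which is exactly $\Gamma_k$ by \eqref{eq:Riccati difference equation of MIOCP}. Its linear part determines the new center $r_k$, and the remaining terms go into $c_k$.

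\textbf{Invertibility of $\Gamma_k$.} Reading off $r_k$ requires $\Gamma_k$ to be invertible. By the matrix inversion lemma,
\begin{align*}
\Gamma_k=A_k^\top\bigl(\Gamma_{k+1}^{-1}+B_kM_k^{-1}B_k^\top\bigr)^{-1}A_k,\qquad M_k:=\Sigma_{\rho_k}^{-1}+I.
\end{align*}
Alternatively, $\Gamma_k=A_k^\top\Gamma_{k+1}(I+B_kM_k^{-1}B_k^\top\Gamma_{k+1})^{-1}A_k$, whose determinant is a product of nonzero factors. So if $A_k$ and $\Gamma_{k+1}$ are invertible, then $\Gamma_k$ is invertible. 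Inducting down from $\Gamma_T=F$ gives invertibility for all $k$, and this is where the invertibility of $F$ and $A_k$ is used.

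\textbf{The recursion for $r_k$.} The linear coefficient in $x$ is
\begin{align*}
-A_k^\top\Gamma_{k+1}r_{k+1}-A_k^\top\Gamma_{k+1}B_k\Sigma_{\hat\pi}\bigl(\Sigma_\rho^{-1}\mu_\rho+B_k^\top\Gamma_{k+1}r_{k+1}\bigr)\cdot(-1),
\end{align*}
with signs to be tracked carefully. Setting it equal to $-\Gamma_kr_k$ gives
\begin{align*}
\Gamma_kr_k=\bigl(A_k^\top\Gamma_{k+1}-A_k^\top\Gamma_{k+1}B_k\Sigma_{\hat\pi}B_k^\top\Gamma_{k+1}\bigr)r_{k+1}-A_k^\top\Gamma_{k+1}B_k\Sigma_{\hat\pi}\Sigma_\rho^{-1}\mu_\rho.
\end{align*}
The first bracket equals $\Gamma_kA_k^{-1}$ by the Riccati equation. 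Multiplying through by $\Gamma_k^{-1}$ and applying the same identity for $\Sigma_{\hat\pi}\Sigma_\rho^{-1}$ yields \eqref{eq:residual term of MIOCP with general prior fixed}.

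\textbf{Uniqueness and main obstacle.} The policy is unique because the Gibbs minimizer is unique at each stage and the total cost decomposes stagewise by the dynamic programming principle. I expect the main obstacle to be the algebra in the last step: getting the signs right and recognizing the factor $\Gamma_kA_k^{-1}$ via the Riccati equation. I would handle it first by writing $\Gamma_k=A_k^\top\Gamma_{k+1}(I-B_k\Sigma_{\hat\pi}B_k^\top\Gamma_{k+1})A_k$.
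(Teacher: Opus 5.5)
Your proof is correct. The paper gives no argument of its own for this lemma; it quotes it from \cite[Theorem 1]{enami2025mutual}. Your backward induction therefore serves as a self-contained derivation rather than a different route. It uses the shifted quadratic value function $V_k(x)=\frac{1}{2}\|x-r_k\|_{\Gamma_k}^2+c_k$, the Gibbs variational principle at each stage, and Woodbury for the invertibility of $\Gamma_k$.

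The algebra checks out:
\begin{itemize}
\item The stage Gibbs density has precision $\Sigma_{\rho_k}^{-1}+I+B_k^{\top}\Gamma_{k+1}B_k$.
\item The identity $\Sigma_{\hat{\pi}_k^{\rho}}\Sigma_{\rho_k}^{-1}=(I+\Sigma_{\rho_k}(I+B_k^{\top}\Gamma_{k+1}B_k))^{-1}$ gives the stated mean.
\item The function $-\log$ of the normalizer has Hessian $A_k^{\top}(\Gamma_{k+1}-\Gamma_{k+1}B_k\Sigma_{\hat{\pi}_k^{\rho}}B_k^{\top}\Gamma_{k+1})A_k=\Gamma_k$.
\item Your linear coefficient has the right signs, despite the awkward factor $(-1)$. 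It yields the $r_k$ recursion once you use $A_k^{\top}(\Gamma_{k+1}-\Gamma_{k+1}B_k\Sigma_{\hat{\pi}_k^{\rho}}B_k^{\top}\Gamma_{k+1})=\Gamma_kA_k^{-1}$.
\end{itemize}
For uniqueness, telescope the per-stage Gibbs identity into $J=\mathbb{E}[V_0(x_0)]+\sum_{k=0}^{T-1}\mathbb{E}\bigl[\mathcal{D}_{\mathrm{KL}}[\pi_k(\cdot|x_k)\|\hat{\pi}_k^{\rho}(\cdot|x_k)]\bigr]$. This gives optimality and uniqueness at once.

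Your derivation also shows that invertibility of $F$ and $A_k$ is needed only to write the linear part of $V_k$ as a center $r_k$. This matches the paper's remark that this assumption can be dropped. Suppose you carry $s_k:=\Gamma_kr_k$ instead, i.e.\ $V_k(x)=\frac{1}{2}x^{\top}\Gamma_kx-s_k^{\top}x+c_k$. The identical computation then gives $s_T=0$ and
$s_k=A_k^{\top}(I-\Gamma_{k+1}B_k\Sigma_{\hat{\pi}_k^{\rho}}B_k^{\top})s_{k+1}-A_k^{\top}\Gamma_{k+1}B_k\Sigma_{\hat{\pi}_k^{\rho}}\Sigma_{\rho_k}^{-1}\mu_{\rho_k}$.
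The policy mean becomes $\Sigma_{\hat{\pi}_k^{\rho}}(\Sigma_{\rho_k}^{-1}\mu_{\rho_k}-B_k^{\top}\Gamma_{k+1}A_kx+B_k^{\top}s_{k+1})$. No inverse of $A_k$, $F$, or $\Gamma_k$ appears anywhere.
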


In the rest of Appendix \ref{app:Proof of Proposition of zero mean optimal prior}, $\hat{\pi}^{\rho}$ refers to \eqref{eq:optimal policy of MIOCP for fixed general prior} rather than \eqref{eq:optimal policy of MIOCP with fixed prior}.
For simplicity of notation, let us denote
\begin{align}
    P_{k}^{\rho}:=&-\Sigma_{\hat{\pi}_{k}^{\rho}}B_{k}^{\top}\Gamma_{k+1}A_{k},\nonumber\\
    q_{k}^{\rho}:=&( I + \Sigma_{\rho_{k}}(I + B_{k}^{\top}\Gamma_{k+1}B_{k}))^{-1}\mu_{\rho_{k}} \nonumber\\
    &+\Sigma_{\hat{\pi}_{k}^{\rho}}B_{k}^{\top}\Gamma_{k+1}r_{k+1}.\label{eq:definition of q}
\end{align}
By using this notation, the obtained policy $\hat{\pi}_{\rho}$ can be expressed as $\hat{\pi}_{k}^{\rho}(\cdot|x)=\mathcal{N}(P_{k}^{\rho}x+q_{k}^{\rho},\Sigma_{\hat{\pi}_{k}^{\rho}})$.
From \eqref{eq:linear system of MIODCP}--\eqref{eq:initial condition of MIODCP}, the mean $\mu_{x_{k}}$ and the covariance matrix $\Sigma_{x_{k}}$ of $x_{k}$ evolve under \eqref{eq:linear system of MIODCP} and \eqref{eq:optimal policy of MIOCP for fixed general prior} as follows:
\begin{align}
    \mu_{x_{k+1}} =& (A_{k} + B_{k}P_{k}^{\rho})\mu_{x_{k}} + B_{k}q_{k}^{\rho}, k \in \llbracket 0, T-1 \rrbracket ,\label{eq:evolution of mean of state with optimal policy for nonzero mean prior} \\
    \mu_{x_{0}} =& 0. \label{eq:initial mean of state with optimal policy for nonzero mean prior}\\
    \Sigma_{x_{k+1}} =& (A_{k} + B_{k}P_{k}^{\rho})\Sigma_{x_{k}} (A_{k} + B_{k}P_{k}^{\rho}))^{\top}\nonumber \\
    &+ B_{k} \Sigma_{\pi_{k}^{\rho}} B_{k}^{\top},k \in \llbracket 0, T-1 \rrbracket, \label{eq:evolution of covariance matrix of state with optimal policy for nonzero mean prior}\\
    \Sigma_{x_{0}} =& \Sigma_{\mathrm{ini}}.\label{eq:initial covariance matrix of state with optimal policy for nonzero mean prior}
\end{align}
By applying an argument similar to the proof of Lemma \ref{lem:relationship between MEOCP and MEODCP}, if the solution to \eqref{eq:evolution of mean of state with optimal policy for nonzero mean prior}--\eqref{eq:initial covariance matrix of state with optimal policy for nonzero mean prior} satisfies $\mu_{x_{T}}=0, \Sigma_{x_{T}}=\Sigma_{\mathrm{fin}}$, then $\hat{\pi}^{\rho}$ is also optimal for Problem \ref{prob:MIODCP}$^{\prime}$ with fixed $\rho \in \mathcal{R}$.
Based on this observation, Problem \ref{prob:MIODCP}$^{\prime}$ can be rewritten as an optimization problem of $\mu_{\rho}:=\{\mu_{\rho_{k}}\}_{k=0}^{T-1}$ and $\Sigma_{\rho}:=\{\Sigma_{\rho_{k}}\}_{k=0}^{T-1}$ as follows:
\begin{align*}
    \min_{\mu_{\rho},\Sigma_{\rho}} J(\hat{\pi}^{\rho},\rho)\ \text{s.t.}\  \eqref{eq:evolution of mean of state with optimal policy for nonzero mean prior}\text{--}\eqref{eq:initial covariance matrix of state with optimal policy for nonzero mean prior}, \mu_{x_{T}}=0,\Sigma_{x_{T}}=\Sigma_{\mathrm{fin}}.
\end{align*}
Now, let us optimize only $\mu_{\rho}$ with $\Sigma_{\rho}$ fixed.
Because $\mu_{\rho}$ does not affect $P_{k}^{\rho}$ and $\Sigma_{x_{k}}$, we assume that $\Sigma_{\rho}$ is fixed so that $\Sigma_{x_{0}}=\Sigma_{\mathrm{ini}}$ and $\Sigma_{x_{T}}=\Sigma_{\mathrm{fin}}$.
Then, the above problem with fixed $\Sigma_{\rho}$ can be rewritten as
\begin{align*}
    \min_{\mu_{\rho}} J(\hat{\pi}^{\rho},\rho)\ \text{s.t.}\  \eqref{eq:evolution of mean of state with optimal policy for nonzero mean prior}, \eqref{eq:initial mean of state with optimal policy for nonzero mean prior}, \mu_{x_{T}}=0.
\end{align*}
Noting that $\mu_{\rho}$ does not affect $P_{k}^{\rho}$ and $\Sigma_{x_{k}}$, we have the following equations under \eqref{eq:linear system of MIODCP}--\eqref{eq:initial condition of MIODCP} and \eqref{eq:optimal policy of MIOCP for fixed general prior}.
\begin{align}
    &\mathbb{E}\left[\frac{1}{2}\|u_{k}\|^{2}\right]=\frac{1}{2}\left\|P_{k}^{\rho}\mu_{x_{k}}+q_{k}^{\rho} \right\|^{2} \nonumber\\
    &+ (\text{Terms independent of }\mu_{\rho}), \label{eq:input cost under optimal policy for fixed prior}\\
    &\mathbb{E}[\mathcal{D}_{\mathrm{KL}}[\hat{\pi}_{k}^{\rho}(\cdot|x_{k})\|\rho_{k}]]=\frac{1}{2}\left\|P_{k}^{\rho}\mu_{x_{k}}+q_{k}^{\rho}-\mu_{\rho_{k}} \right\|_{\Sigma_{\rho_{k}}^{-1}}^{2}\nonumber\\
    &+ (\text{Terms independent of }\mu_{\rho}). \label{eq:KL divergence cost under optimal policy for fixed prior}
\end{align}
Assume $\mu_{\rho_{0}}=\cdots = \mu_{\rho_{T-1}}=0$.
Then, $r_{k}=0$ and $q_{k}^{\rho}=0$ for any $k \in \llbracket 0,T-1 \rrbracket$ from \eqref{eq:residual term of MIOCP with general prior fixed}, \eqref{eq:residual term for k=T of MEOCP with general prior fixed}, and \eqref{eq:definition of q}.
It hence follows that $\mu_{x_{k}}=0$ for any $k \in \llbracket 0,T \rrbracket$ from \eqref{eq:evolution of mean of state with optimal policy for nonzero mean prior} and \eqref{eq:initial mean of state with optimal policy for nonzero mean prior}, which implies that $\mu_{\rho_{0}}=\cdots = \mu_{\rho_{T-1}}=0$ is feasible.
In addition, if $\mu_{\rho_{0}}=\cdots = \mu_{\rho_{T-1}}=0$, we have
$\frac{1}{2}\left\|P_{k}^{\rho}\mu_{x_{k}}+q_{k}^{\rho} \right\|^{2} = 0$ and $\frac{1}{2}\left\|P_{k}^{\rho}\mu_{x_{k}}+q_{k}^{\rho}-\mu_{\rho_{k}} \right\|_{\Sigma_{\rho_{k}}^{-1}}^{2}=0$ for any $k \in \llbracket0,T-1 \rrbracket$.
Therefore, $\mu_{\rho_{0}}=\cdots = \mu_{\rho_{T-1}}=0$ is optimal.
Furthermore, if there exists $k \in \llbracket 0,T-1 \rrbracket$ such that $\mu_{\rho_{k}}\neq 0$, then $\frac{1}{2}\left\|P_{k}^{\rho}\mu_{x_{k}}+q_{k}^{\rho} \right\|^{2} + \frac{1}{2}\left\|P_{k}^{\rho}\mu_{x_{k}}+q_{k}^{\rho}-\mu_{\rho_{k}} \right\|_{\Sigma_{\rho_{k}}^{-1}}^{2}>0$, which implies the optimal solution $\mu_{\rho_{0}}=\cdots = \mu_{\rho_{T-1}}=0$ is unique.

\section{Proof of Proposition \ref{prop:optimal mean of reference process is zero}}
\label{app:Proof of Proposition of zero mean optimal reference process}

For a prior $\rho\in \mathcal{R},\rho_{k}=\mathcal{N}(\mu_{\rho_{k}},\Sigma_{\rho_{k}})$, denote by $\hat{\mathbb{Q}}^{\rho}$ the probability distribution of the following state process on $\chi$ in Appendix \ref{app:Proof of Proposition of zero mean optimal reference process}.
\begin{align*}
    &x_{k+1}=A_{k}x_{k} + B_{k}(\Sigma_{\rho_{k}}+I)^{-1}\mu_{\rho_{k}}+ B_{k}^{\rho}v_{k},\\
    & v_{k}\sim \mathcal{N}(0,I), x_{0}\sim \mathcal{N}(\mu_{\mathrm{ref}},\Sigma_{\mathrm{ref}}).
\end{align*}
Because $\mathbb{Q}_{0}^{\rho} = \hat{\mathbb{Q}}_{0}^{\rho} = \mathcal{N}(\mu_{\mathrm{ref}},\Sigma_{\mathrm{ref}})$, we have
    \begin{align*}
        &\mathcal{D}_{\mathrm{KL}}\left[\mathbb{P}\|  \hat{\mathbb{Q}}^{\rho}\right]\\
        =&\mathcal{D}_{\mathrm{KL}}\left[\mathbb{P}\|  \mathbb{Q}^{\rho} \right] \\
        &+ \sum_{k=0}^{T-1}\int_{\chi}\log \frac{d\mathbb{Q}_{k+1|k}^{\rho}}{d\hat{\mathbb{Q}}_{k+1|k}^{\rho}}(x_{k+1}|x_{k})d\mathbb{P}(x_{0},\ldots,x_{T}).
    \end{align*}
    Because $B_{k}$ is full column rank, it follows that
    \begin{align*}
         &\frac{d\mathbb{Q}_{k+1|k}^{\rho}}{d\hat{\mathbb{Q}}_{k+1|k}^{\rho}}(x_{k+1}|x_{k})\\
          =&\frac{1}{\sqrt{|I+\Sigma_{\rho_{k}}|}}\exp\left[\frac{1}{2}\|x_{k+1}-A_{k}x_{k}\|_{B_{k}^{\dagger \top}B_{k}^{\dagger}}^{2}\right.\\
          &\left.\hspace{73pt}-\frac{1}{2}\|\mu_{\rho_{k}}\|_{(\Sigma_{\rho_{k}}+I)^{-1}}^{2} \right].
    \end{align*}
    It hence follows that
    \begin{align*}
        &\mathcal{D}_{\mathrm{KL}}[\mathbb{P}\|\mathbb{Q}^{\rho}]+\mathbb{E}_{\mathbb{P}}[V(x_{0},\ldots,x_{T})]\\
        =&\mathcal{D}_{\mathrm{KL}}\left[\mathbb{P}\|\hat{\mathbb{Q}}^{\rho}\right] + \sum_{k=0}^{T-1}\frac{1}{2}\|\mu_{\rho_{k}}\|_{(\Sigma_{\rho_{k}}+I)^{-1}}^{2}\\
        &-\sum_{k=0}^{T-1}\log\frac{1}{\sqrt{|I+\Sigma_{\rho_{k}}|}}.
    \end{align*}
    Define $y_{k}:=\mathbb{E}_{\mathbb{P}}[x_{k}], z_{k}:=\mathbb{E}_{\hat{\mathbb{Q}}^{\rho}}[x_{k}]$, where $z_{k}$ evolves as
    \begin{align}
        z_{k+1}=A_{k}z_{k}+B_{k}(\Sigma_{\rho_{k}}+I)^{-1}\mu_{\rho_{k}}, z_{0}=\mu_{\mathrm{ref}}. \label{eq:evolution of z}
    \end{align}
    In addition, we employ bold notation for a time-series vector, e.g., $\mathbf{x}:=(x_{0}^{\top},...,x_{T}^{\top})^{\top}$.
    Let us regard $\mathbb{P}$ and $\hat{\mathbb{Q}}^{\rho}$ as probability distributions of $\mathbf{x}$.
    The covariance matrix $\Sigma_{\hat{\mathbb{Q}}^{\rho}}$ of $\hat{\mathbb{Q}}^{\rho}$ is given by
    \begin{align*}
        \Sigma_{\hat{\mathbb{Q}}^{\rho}} = L^{-1}HL^{-\top},
    \end{align*}
    where
    \begin{align*}
        L=&\begin{bmatrix}
            I&0&\cdots &0&0\\
            -A_{0} & I &\cdots &0&0\\
            \vdots& \ddots  &\ddots &\vdots&\vdots\\
            \vdots & \vdots  &\ddots &\ddots&\vdots\\
            0 & 0  & \cdots & -A_{T-1} & I
        \end{bmatrix},\\
        H=&\text{diag}(\Sigma_{\mathrm{ref}},B_{0}(I+\Sigma_{\rho_{0}}^{-1})^{-1}B_{0}^{\top},\ldots,\\
        &\hspace{20pt}B_{T-1}(I+\Sigma_{\rho_{T-1}}^{-1})^{-1}B_{T-1}^{\top}).
    \end{align*}
    Let $\tilde{\mathbb{P}}$ and $\tilde{\mathbb{Q}}^{\rho}$ denote the centered distributions of $\mathbb{P}$ and $\hat{\mathbb{Q}}^{\rho}$, respectively.
    Because $\Sigma_{\hat{\mathbb{Q}}^{\rho}}$ is positive semidifinite, the support of $\hat{\mathbb{Q}}^{\rho}$ is $\mathbf{z}+\mathrm{Im}(\Sigma_{\hat{\mathbb{Q}}^{\rho}}):=\{\mathbf{z}+\mathbf{z}^{\prime}\mid \mathbf{z}^{\prime} \in \mathrm{Im}(\Sigma_{\hat{\mathbb{Q}}^{\rho}})\}$.
    Let $\mathrm{P}$ and $\hat{\mathrm{Q}}^{\rho}$ denote $\mathbb{P}$ and $\hat{\mathbb{Q}}^{\rho}$ viewed as probability distributions on $\mathbf{z}+\mathrm{Im}(\Sigma_{\hat{\mathbb{Q}}^{\rho}})$.
    Similarly, $\tilde{\mathrm{P}}$ and $\tilde{\mathrm{Q}}^{\rho}$ denote $\tilde{\mathbb{P}}$ and $\tilde{\mathbb{Q}}^{\rho}$ viewed as probability distributions on the support $\mathrm{Im}(\Sigma_{\hat{\mathbb{Q}}^{\rho}})$ of $\tilde{\mathrm{Q}}^{\rho}$.
    By expressing $\hat{\mathrm{Q}}^{\rho}$ as
    \begin{align*}
        \hat{\mathrm{Q}}^{\rho}(\mathbf{x})\propto \exp\left\{ -\frac{1}{2}\|\mathbf{x}-\mathbf{z}\|_{\Sigma_{\hat{\mathbb{Q}}^{\rho}}^{\dagger}}^{2}\right\},
    \end{align*}
    we have
    \begin{align*}
        &\mathcal{D}_{\mathrm{KL}}\left[\mathbb{P}\|\hat{\mathbb{Q}}^{\rho}\right]\\
        =&\mathcal{D}_{\mathrm{KL}}\left[\mathrm{P}\|\hat{\mathrm{Q}}^{\rho}\right]\\
        =&-\mathcal{H}(\mathrm{P}) + \mathbb{E}_{\mathrm{P}}\left[\frac{1}{2}\|\mathbf{x}-\mathbf{z}\|_{\Sigma_{\hat{\mathbb{Q}}^{\rho}}^{\dagger}}^{2}\right] + (\text{Cosntant})\\
        =& -\mathcal{H}(\tilde{\mathrm{P}}) + \mathbb{E}_{\tilde{\mathrm{P}}}\left[\frac{1}{2}\|\mathbf{x}\|_{\Sigma_{\hat{\mathbb{Q}}^{\rho}}^{\dagger}}^{2}\right]\\
        &+\frac{1}{2}\|\mathbf{y}-\mathbf{z}\|_{\Sigma_{\hat{\mathbb{Q}}^{\rho}}^{\dagger}}^{2} + (\text{Cosntant})\\
        =&\mathcal{D}_{\mathrm{KL}}\left[\tilde{\mathbb{P}}\|\tilde{\mathbb{Q}}^{\rho}\right]+\frac{1}{2}\|\mathbf{y}-\mathbf{z}\|_{\Sigma_{\hat{\mathbb{Q}}^{\rho}}^{\dagger}}^{2}.
    \end{align*}
    Note that the mean $\mathbf{y}$ of $\mathbb{P}$ must satisfy $\mathbf{y}-\mathbf{z} \in \mathrm{Im}(\Sigma_{\hat{\mathbb{Q}}^{\rho}})$ because the term $\mathcal{D}_{\mathrm{KL}}\left[\mathbb{P}\|\hat{\mathbb{Q}}^{\rho}\right]$ implicitly imposes that the support of $\mathbb{P}$ is included in that of $\hat{\mathbb{Q}}^{\rho}$.
    From these results, by recalling the notations $\mu_{\rho}=\{\mu_{\rho_{k}}\}_{k=0}^{T-1},\Sigma_{\rho}=\{\Sigma_{\rho_{k}}\}_{k=0}^{T-1}$, Problem \ref{prob:SBP with nonzero mean} can be rewritten as follows:
    \begin{prob}
        \begin{align*}
            &\min_{\tilde{\mathbb{P}},\mu_{\rho},\Sigma_{\rho},\mathbf{y},\mathbf{z}}\mathcal{D}_{\mathrm{KL}}\left[\tilde{\mathbb{P}}\|\tilde{\mathbb{Q}}^{\rho}\right]+\frac{1}{2}\|\mathbf{y}-\mathbf{z}\|_{\Sigma_{\hat{\mathbb{Q}}^{\rho}}^{\dagger}}^{2}\\
            & \hspace{40pt}+ \sum_{k=0}^{T-1}\frac{1}{2}\|\mu_{\rho_{k}}\|_{(\Sigma_{\rho_{k}}+I)^{-1}}^{2}-\sum_{k=0}^{T-1}\log\frac{1}{\sqrt{|I+\Sigma_{\rho_{k}}|}}\\
            &\text{s.t.}\ \mathbb{E}_{\tilde{\mathbb{P}}}[x_{k}]=0\ \forall k\in \llbracket0,T\rrbracket, \tilde{\mathbb{P}} \in \Pi(\Sigma_{\mathrm{ini}},\Sigma_{\mathrm{fin}}),\\
            &\hspace{17pt}y_{0}=\mu_{\mathrm{ini}},y_{T}=\mu_{\mathrm{fin}},\mathbf{y}-\mathbf{z} \in \mathrm{Im}(\Sigma_{\hat{\mathbb{Q}}^{\rho}}),\eqref{eq:evolution of z}.
        \end{align*}
        \label{prob:rewritten SBP with nonzero mean}
    \end{prob}

    Here, let us consider the optimal $\mu_{\rho^{*}}=\{\mu_{\rho_{k}^{*}}\}_{k=0}^{T-1}$ of Problem \ref{prob:rewritten SBP with nonzero mean} with fixed $\Sigma_{\rho}$.
    Because the centered distribution $\tilde{\mathbb{Q}}^{\rho}$ only depends on $\Sigma_{\rho}$, $\mu_{\rho}$ does not affect the optimization of $\tilde{\mathbb{P}}$ when $\Sigma_{\rho}$ is fixed.
    Therefore, the optimal $\mu_{\rho^{*}}$ of Problem \ref{prob:rewritten SBP with nonzero mean} with fixed $\Sigma_{\rho}$ is obtained by solving the subproblem of Problem \ref{prob:rewritten SBP with nonzero mean}.
    \begin{align*}
        &\min_{\mu_{\rho},\mathbf{y},\mathbf{z}}\frac{1}{2}\|\mathbf{y}-\mathbf{z}\|_{\Sigma_{\hat{\mathbb{Q}}^{\rho}}^{\dagger}}^{2}+ \sum_{k=0}^{T-1}\frac{1}{2}\|\mu_{\rho_{k}}\|_{(\Sigma_{\rho_{k}}+I)^{-1}}^{2}\\
        &\text{s.t.}\ y_{0}=\mu_{\mathrm{ini}},y_{T}=\mu_{\mathrm{fin}},\mathbf{y}-\mathbf{z} \in \mathrm{Im}(\Sigma_{\hat{\mathbb{Q}}^{\rho}}),\eqref{eq:evolution of z}.
    \end{align*}
    Let us derive the optimal $\mathbf{y}^{*}$.
    By denoting
    \begin{align*}
        \mu := \left[\mu_{\mathrm{ini}}^{\top},\mu_{\mathrm{fin}}^{\top}\right]^{\top},C:= \begin{bmatrix}
            I & O & \cdots & O\\
            O & O & \cdots & I
        \end{bmatrix},
    \end{align*}
    noting that the second term $\sum_{k=0}^{T-1}\frac{1}{2}\|\mu_{\rho_{k}}\|_{(\Sigma_{\rho_{k}}+I)^{-1}}^{2}$ is independent of $\mathbf{y}$ and can be neglected, and introducing the Lagrangian multiplier $\lambda \in \mathbb{R}^{2n}$, the Lagrangian is given as
    \begin{align*}
        \frac{1}{2}\|\mathbf{y}-\mathbf{z}\|_{\Sigma_{\hat{\mathbb{Q}}^{\rho}}^{\dagger}}^{2} + \lambda^{\top}(C\mathbf{y}-\mu).
    \end{align*}
    The first-order optimality condition yields
    \begin{align*}
        \Sigma_{\hat{\mathbb{Q}}^{\rho}}^{\dagger}(\mathbf{y}-\mathbf{z})=-C^{\top}\lambda.
    \end{align*}
    Because $\mathbf{y}-\mathbf{z}\in \mathrm{Im}(\Sigma_{\hat{\mathbb{Q}}^{\rho}})$, multiplying by matrix $\Sigma_{\hat{\mathbb{Q}}^{\rho}}$ from the left yields
    \begin{align*}
       \mathbf{y}-\mathbf{z}=-\Sigma_{\hat{\mathbb{Q}}^{\rho}}C^{\top}\lambda.
    \end{align*}
    Multiplying by matrix $C$ from the left, we have
    \begin{align*}
       \mu-C\mathbf{z}=-C\Sigma_{\hat{\mathbb{Q}}^{\rho}}C^{\top}\lambda.
    \end{align*}
    From a straightforward calculation, we have
    \begin{align*}
        &C\Sigma_{\hat{\mathbb{Q}}^{\rho}}C^{\top}\\
        &= \begin{bmatrix}
            \Sigma_{\mathrm{ref}} & \Sigma_{\mathrm{ref}} \Phi(T,0)^{\top}\\
            \Phi(T,0) \Sigma_{\mathrm{ref}} & \Phi(T,0)\Sigma_{\mathrm{ref}}\Phi(T,0)^{\top} + G_{r}^{\rho}(T,0)
        \end{bmatrix},
    \end{align*}
    where $G_{r}^{\rho}(T,0)$ denotes the reachability Gramian \eqref{eq:reachability Gramian} calculated under $\bar{B}_{k}=B_{k}^{\rho}$.
    Because the invertibility of $G_{r}(T,0)$ is equivalent to that of $G_{r}^{\rho}(T,0)$, the invertibility assumption of $G_{r}(T,0)$ implies the existence of
    \begin{align*}
        (C\Sigma_{\hat{\mathbb{Q}}^{\rho}}C^{\top})^{-1}= \begin{bmatrix}
            \Lambda_{0,0}& \Lambda_{0,T}\\
            \Lambda_{0,T}^{\top} & \Lambda_{T,T}
        \end{bmatrix},
    \end{align*}
    where
    \begin{align*}
        \Lambda_{0,0}:=&\Sigma_{\mathrm{ref}}^{-1} + \Phi(T,0)^{\top} G_{r}^{\rho}(T,0)^{-1}\Phi(T,0),\\ 
        \Lambda_{0,T}:=&-\Phi(T,0)^{\top}G_{r}^{\rho}(T,0)^{-1},\\
        \Lambda_{T,T}:=& G_{r}^{\rho}(T,0)^{-1}.
    \end{align*}
    It hence follows that
    \begin{align*}
        \lambda=-(C\Sigma_{\hat{\mathbb{Q}}^{\rho}}C^{\top})^{-1}(\mu-C\mathbf{z}).
    \end{align*}
    Therefore, the optimal $\mathbf{y}^{*}$ is given by
    \begin{align}
        \mathbf{y}^{*}=\mathbf{z}+\Sigma_{\hat{\mathbb{Q}}^{\rho}}C^{\top}(C\Sigma_{\hat{\mathbb{Q}}^{\rho}}C^{\top})^{-1}(\mu-C\mathbf{z}). \label{eq:optimal y}
    \end{align}
    By substituting $\mathbf{y}^{*}$, the subproblem is rewritten as
    \begin{align*}
        &\min_{\mu_{\rho},\mathbf{z}}\frac{1}{2}\|\mu-C\mathbf{z}\|_{(C\Sigma_{\hat{\mathbb{Q}}^{\rho}}C^{\top})^{-1}}^{2}+ \sum_{k=0}^{T-1}\frac{1}{2}\|\mu_{\rho_{k}}\|_{(\Sigma_{\rho_{k}}+I)^{-1}}^{2}\\
        &\text{s.t.}\ \eqref{eq:evolution of z}.
    \end{align*}
    Because $z_{0}=\mu_{\mathrm{ref}}$, the first term of the above objective function is decomposed as
    \begin{align*}
        &\frac{1}{2}\|\mu-C\mathbf{z}\|_{(C\Sigma_{\hat{\mathbb{Q}}^{\rho}}C^{\top})^{-1}}^{2}\\
        =&\frac{1}{2}\|z_{T}-\mu_{\mathrm{fin}}+\Phi(T,0)(\mu_{\mathrm{ini}}-\mu_{\mathrm{ref}})\|_{G_{r}^{\rho}(T,0)^{-1}}^{2}\\
        &+ \frac{1}{2}\|\mu_{\mathrm{ini}}-\mu_{\mathrm{ref}}\|_{\Sigma_{\mathrm{ref}}^{-1}}^{2}.
    \end{align*}
    Therefore, the optimal $\mu_{\rho^{*}}$ is given by solving the following LQR problem.
    \begin{align*}
        \min_{\mu_{\rho}}&\sum_{k=0}^{T-1}\frac{1}{2}\|\mu_{\rho_{k}}\|_{(\Sigma_{\rho_{k}}+I)^{-1}}^{2}\\
        &+\frac{1}{2}\|z_{T}-\mu_{\mathrm{fin}}+\Phi(T,0)(\mu_{\mathrm{ini}}-\mu_{\mathrm{ref}})\|_{G_{r}^{\rho}(T,0)^{-1}}^{2}\\
        &\hspace{-17pt}\text{s.t. }\eqref{eq:evolution of z}.
    \end{align*}
    By applying \cite{lewis2012optimal}, the optimal $\mu_{\rho^{*}}$ coincides with $\bar{u}^{*}$.
    By substituting the solution $\mathbf{z}$ of \eqref{eq:evolution of z} with $\mu_{\rho}=\bar{u}^{*}$ into \eqref{eq:optimal y}, we have $\mathbf{y}^{*}_{k}=\mu_{x_{k}}^{*},k \in \llbracket0,T \rrbracket$, which completes the proof.

\bibliographystyle{plain}        
\bibliography{Automatica_MIODCP}  

\end{document}